\setlist{noitemsep}
\newtheorem{corollary}{Corollary}[section]
\newtheorem{proposition}{Proposition}[section]
\newtheorem{lemma}{Lemma}[section]
\newtheorem{conjecture}{Conjecture}[section]
\theoremstyle{definition}
\newtheorem{definition}{Definition}[section]
\newtheorem{example}{Example}[section]
\newtheorem{remark}{Remark}[section]
\newtheorem{question}{Question}[section]
\newtheorem{problem}{Problem}[section]
\let\c@conjecture=\c@theorem
\let\c@corollary=\c@theorem
\let\c@proposition=\c@theorem
\let\c@lemma=\c@theorem
\let\c@definition=\c@theorem
\let\c@example=\c@theorem
\let\c@remark=\c@theorem
\let\c@equation\c@theorem
\let\c@question\c@theorem
\let\c@problem\c@theorem
\let\c@fact\c@theorem
\def\makeautorefname#1#2{\expandafter\def\csname#1autorefname\endcsname{#2}}
\DeclareMathOperator{\id}{id}
\DeclareMathOperator{\lk}{lk}
\def\@tocline#1#2#3#4#5#6#7{\relax
  \ifnum #1>\c@tocdepth 
  \else
    \par \addpenalty\@secpenalty\addvspace{#2}%
    \begingroup \hyphenpenalty\@M
    \@ifempty{#4}{%
      \@tempdima\csname r@tocindent\number#1\endcsname\relax
    }{%
      \@tempdima#4\relax
    }%
    \parindent\z@ \leftskip#3\relax \advance\leftskip\@tempdima\relax
    \rightskip\@pnumwidth plus4em \parfillskip-\@pnumwidth
    #5\leavevmode\hskip-\@tempdima
      \ifcase #1
       \or\or \hskip 1em \or \hskip 2em \else \hskip 3em \fi%
      #6\nobreak\relax
    \dotfill\hbox to\@pnumwidth{\@tocpagenum{#7}}\par
    \nobreak
    \endgroup
  \fi}
\def\@tempa#1{\@xp\@tempb\meaning#1\@nil#1}
\def\@tempb#1>#2#3 #4\@nil#5{%
  \@xp\ifx\csname#3\endcsname\mathaccent
    \@tempc#4?"7777\@nil#5%
  \else
    \PackageWarningNoLine{amsmath}{%
      Unable to redefine math accent \string#5}%
  \fi
}
\def\@tempc#1"#2#3#4#5#6\@nil#7{%
  \chardef\@tempd="#3\relax\set@mathaccent\@tempd{#7}{#2}{#4#5}}
\title{Classical invariants of spiral knots}
\author{Sarah Blackwell, Ashish Das, Sydney Mayer,\\Luke Moyar, Faisal Leo Quraishi, and Ryan Stees}
\date{}
\begin{document}
\maketitle
\begin{abstract}
Torus knots are an important family of knots about which much is understood; invariants of torus knots often exhibit nice formulas, making them convenient and fundamental building blocks for examples in knot theory. Spiral knots, defined and first studied in \cite{BETVWWY}, are a braid-theoretic generalization of torus knots, but comparatively not much is known about this broader family of knots. We give a general recursive formula for the Alexander polynomials of spiral knots, and from this we derive several properties of spiral knots, including a simple genus formula. Additionally, we investigate the consequences these results have on classification questions.
\end{abstract}

\section{Introduction} \label{sec:intro}

Torus knots are among the most thoroughly-studied families of knots in $S^3$. Often, their invariants admit simple formulas, making them convenient building blocks for examples in knot theory. For instance, torus knots serve as a point of reference in the study of concordance and 4-dimensional properties of knots. Kronheimer-Mrowka's proof of the Milnor conjecture on the smooth 4-genus of torus knots is one of the first such results for a family of knots \cite{KronheimerMrowka}. \emph{Slice torus invariants}, which include the $\tau$-invariant of knot Floer homology \cite{OzsvathSzabo} and the $s$-invariant of Khovanov homology \cite{Rasmussen}, are an entire family of concordance invariants defined by their behavior on torus knots \cite{Lewark}. The subgroup of the concordance group generated by torus knots has been studied extensively \cites{Litherland,HKL,CKP}, and cables of and surgeries on torus knots are heavily featured in recent work on concordance and homology cobordism \cites{AcetoGolla,PinzonCaicedo,Lokteva}.

A number of generalizations of torus knots, such as twisted torus knots \cites{DRS,ParkShahab}, have also garnered recent attention. In this paper we consider \emph{spiral knots}, a braid-theoretic generalization of torus knots first defined in \cite{BETVWWY} which additionally encompasses the family of weaving knots \cite{DiPrisaSavk}.

\begin{definition} \label{def:spiral}
Let $p$ and $q$ be relatively prime natural numbers with $p,q\geq 2$, and let $\varepsilon=(\varepsilon_i)_{i=1}^{p-1}\in\{\pm 1\}^{p-1}$. The \emph{spiral knot} $S(p,q,\varepsilon)$ is the closure of the braid \[\Bigg(\prod_{i=1}^{p-1}\sigma_i^{\varepsilon_i}\Bigg)^q\in B_p,\] where $B_p$ is the braid group on $p$ strands with standard generators $\sigma_1,\dots,\sigma_{p-1}$.
\end{definition}

The above definition makes sense for $q=1$, but every $S(p,1,\varepsilon)$ is unknotted, so we will typically assume $q\geq 2$ unless stated otherwise.
Note that when $\varepsilon=(1,1,\dots,1)$, the spiral knot $S(p,q,\varepsilon)$ is the torus knot $T_{p,q}$. Further note that any $S(p,q,\varepsilon)$ is $q$-periodic and is the preimage of $S(p,1,\varepsilon)$ under the $q$-fold branched cover of $S^3$ branched over the braid axis. The preimage of the branch set under this branched cover is the braid axis for $S(p,q,\varepsilon)$.

The subfamilies consisting of torus knots and weaving knots are well-studied. In contrast, little is known about the larger family of spiral knots as a whole. Brothers et al. study spiral knots with $q$ a prime power in \cite{BETVWWY}, and Kim, Taalman, and the last author show certain families of spiral knots have determinants which admit related recursive formulas in \cite{KST}.

In this paper, we give a general formula for the Alexander polynomials of all spiral knots.

\begin{restatable*}{theorem}{Alex} \label{Ther:Alex} The Alexander polynomial $\Delta_{S(p,q,\varepsilon)}(t)$ of the spiral knot $S(p,q,\varepsilon)$ is given by the formula
\[\Delta_{S(p,q,\varepsilon)}(t)  = \prod\limits_{\ell = 1}^{q-1} 
\mathcal{C}_{p-1}\left(e^{\frac{2\pi \ell}{q}i},t\right)\]
where $\mathcal{C}_k(x,t)$ is defined recursively for $k\geq 2$ by 
\[\mathcal{C}_k = \left(\frac{\mu(k)^2}{t}+x\right)\mathcal{C}_{k-1} - \left(\frac{\mu(k-1)\mu(k)x}{t}\right)\mathcal{C}_{k-2}\]
with $\mathcal{C}_0 = 1$, $\mathcal{C}_1 = \frac{\mu(1)^2}{t}+x$, and $\mu(i) = \begin{cases}
1, & \varepsilon_i = 1 \\
t, & \varepsilon_i = -1 \\
0, & i\notin \{1,\dots, p-1\}
\end{cases}$.
\end{restatable*}

\noindent We expect that the polynomials $\mathcal{C}_k$ are related to the multivariable Alexander polynomial of the two-component link consisting of the spiral knot $S(p,1,\varepsilon)$ and its braid axis which is the branch set of a $q$-fold branched cover producing the knot $S(p,q,\varepsilon)$. The above formula appears to be an explicit version of the general form of the Alexander polynomial of a periodic knot given by Murasugi in \cite{Murasugi}. (See \autoref{sec:future} for further discussion.)

From this formula, we derive several additional properties of spiral knots, including a formula for the 3-genus of a spiral knot which agrees with the formula for torus knots.

\begin{restatable*}{theorem}{genus} \label{Ther:genus}
The genus of the spiral knot 
$S(p,q,\varepsilon)$ is 
$$g(S(p,q,\varepsilon)) = \frac{(p-1)(q-1)}{2}.$$    
\end{restatable*}

\noindent It is worth noting that the Milnor conjecture, proved first by Kronheimer-Mrowka in \cite{KronheimerMrowka} and later in \cites{OzsvathSzabo,Rasmussen}, which states that this formula also holds for the smooth 4-genus of torus knots, does not hold for spiral knots, as the knot $10_{123}=S(3,5,(1,-1))$ is slice. In future work, we hope to study general concordance properties of spiral knots.

We additionally consider whether each non-torus spiral knot corresponds to a unique triple $(p,q,\varepsilon)$. Recall that for torus knots we have $T_{p,q}=T_{q,p}$. Along these lines, we prove:

\begin{restatable*}{theorem}{swappq} \label{Ther:swappq}
$S(p,q,\varepsilon) = S(q,p,\varepsilon')$ for some $\varepsilon'$ if and only if $S(p,q,\varepsilon)$ is a torus knot.
\end{restatable*}

\noindent In this way, the subfamily of torus knots may be thought of as a ``diagonal'' within the family of spiral knots.



\subsection*{Acknowledgments}
Most of the work for this project took place at the  UVA Topology REU in the summer of 2024. The authors would like to thank the University of Virginia for hosting them, Thomas Koberda and Slava Krushkal for organizing the REU, and Louisa Liles and Yangxiao Luo for their mentorship and helpful conversations. This REU program was supported by the NSF Research Training Group grant DMS-1839968. SB was additionally supported by the NSF Postdoctoral Research Fellowship DMS-2303143. We also acknowledge the KnotInfo table of knots \cite{KnotInfo} and KnotFolio \cite{KnotFolio}, which were incredibly useful tools throughout the project, and thank Benjamin Bode, Chuck Livingston, and Kyle Miller for helpful correspondences.

\setcounter{tocdepth}{2}
\tableofcontents

\section{Background} \label{sec:background}

\subsection{Torus and spiral knots}
A \emph{knot} is a smooth embedding $S^1\hookrightarrow S^3$ considered up to ambient isotopy. 
\textit{Torus knots} are the family of knots $\{T_{p,q}\}$ that can sit on the surface of an unknotted torus in $S^3$. 
They are parametrized by coprime pairs of integers $(p,q)$, where $p$ and $q$ correspond to the number of times the knot wraps around the torus in each $S^1$ direction. (If $p,q$ are not coprime, then $T_{p,q}$ is a link with $\gcd(p,q)$ components.)
Braids give a particularly convenient way of representing torus knots. 
The torus knot $T_{p,q}$ is the closure of the braid $(\sigma_1\sigma_2\cdots\sigma_{p-1})^q$, where $\sigma_i$ is the standard generator of the braid group $B_p$ on $p$ strands represented by strand $i$ crossing over strand $i+1$. See \autoref{fig:torus}. 

\begin{figure}[h]
\centering
    \begin{minipage}[c]{0.3\linewidth}   
    \includegraphics[width=\linewidth]{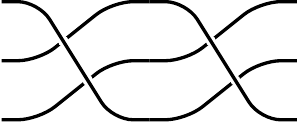}
    \end{minipage}
    \hspace{2em}
    \begin{minipage}[c]{0.4\linewidth}
    \includegraphics[width=\linewidth]{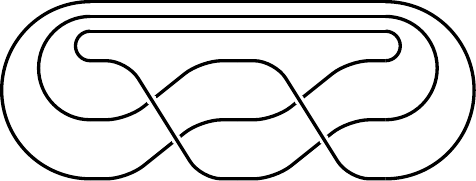}
    \end{minipage}
    \caption{The braid $(\sigma_1\sigma_2)^2$ and its closure, the torus knot $T_{3,2}$.
    \label{fig:torus}}
\end{figure}

Spiral knots (recall \autoref{def:spiral}) are a braid-theoretic generalization of torus knots that were first defined in \cite{BETVWWY}. 
The spiral knot $S(p,q,\varepsilon)$ is the closure of the braid $(\sigma_1^{\varepsilon_1}\sigma_2^{\varepsilon_2}\cdots\sigma_{p-1}^{\varepsilon_{p-1}})^q$, where $\varepsilon$ is the vector $\varepsilon=(\varepsilon_1,\varepsilon_2,\dots,\varepsilon_{p-1})\in\{\pm 1\}^{p-1}$; see \autoref{fig:spiral}. 
Compared with the previous braids whose closures are torus knots, we now allow each $\sigma_i$ to appear with an exponent of $+1$ or $-1$. 
Using this notation, the torus knot $T_{p,q}$ is the spiral knot $S(p,q,\varepsilon)$ with $\varepsilon=(1,1,\dots,1)$. 
By construction the spiral knot $S(p,q,\varepsilon)$ is $q$-periodic.

\begin{figure}[h]
\centering
    \begin{minipage}[c]{0.4\linewidth}   
    \includegraphics[width=\linewidth]{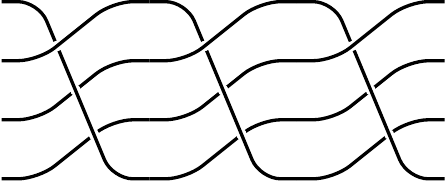}
    \end{minipage}
    \hspace{2em}
    \begin{minipage}[c]{0.5\linewidth}
    \includegraphics[width=\linewidth]{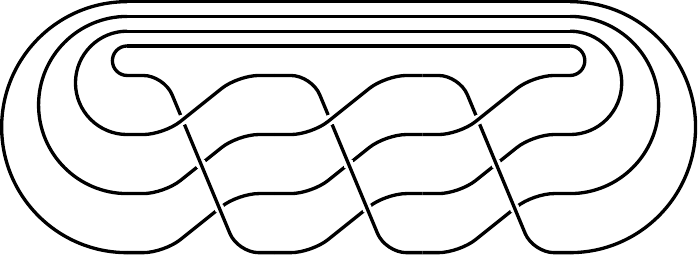}
    \end{minipage}
    \caption{The braid $(\sigma_1^{-1}\sigma_2\sigma_3)^3$ and its closure, the spiral knot $S(4,3,(-1,1,1))$.
    \label{fig:spiral}}
\end{figure}

In this paper, we will usually adopt the convention that $\varepsilon_1 = 1$. Note that given $S(p,q,\varepsilon)$, the knot $S(p,q,-\varepsilon)$ is its mirror image, but we will generally not distinguish between knots and their mirrors in this work. Similarly, we only consider $\varepsilon$ vectors up to inverting the order of the coordinates, as this does not change the knot type; for instance, we consider $(1,-1,1,1)$ and $(1,1,-1,1)$ to be the same $\varepsilon$ vector.

\subsection{Invariants of torus and spiral knots}
Torus knots are well-understood in the sense that many of their invariants have been computed and admit simple formulas.
Recall the following invariants of knots $K\subset S^3$:
\begin{itemize}
\item The \emph{genus} of $K$ is the minimum genus $g(K)$ of an orientable (Seifert) surface in $S^3$ which $K$ bounds.
\item The \emph{Alexander polynomial} of $K$ is $\Delta_K(t)=\det(M-tM^T)$, where $M$ is a Seifert matrix corresponding to any Seifert surface for $K$.
\item The \emph{determinant} of $K$ is $\det(K)=\det(M+M^T)$, where $M$ is again a Seifert matrix for $K$. Note that $\det(K)=\Delta_K(-1)$.
\end{itemize}
For the torus knot $T_{p,q}$, we have genus
\[
g(T_{p,q}) = \frac{(p-1)(q-1)}{2}
\]
and Alexander polynomial
\[
\Delta_{T_{p,q}}(t) = \frac{(t^{pq}-1)(t-1)}{(t^p-1)(t^q-1)}.
\]
As a consequence, Breiland-Oesper-Taalman \cite{BOT} note that the determinant of $T_{p,q}$ is
\[
\det(T_{p,q}) = \begin{cases}
    1, & \text{$p$ and $q$ are both odd}\\
    p, & \text{$p$ is odd and $q$ is even}\\
    q, & \text{$q$ is odd and $p$ is even}
\end{cases}.
\]

It is shown in \cite{BETVWWY} that the above genus formula holds more generally for spiral knots $S(p,q,\varepsilon)$ when $q$ is a prime power.
In \autoref{Ther:genus} we show that this formula holds for \emph{all} spiral knots.
In contrast, Kim, Taalman, and the last author demonstrate that determinants of spiral knots exhibit very different patterns than the simple formula for torus knots \cite{KST}. 
Indeed, we show in \autoref{Ther:Alex} that the Alexander polynomials of spiral knots exhibit a more complicated formula than the one above for torus knots.

\section{The Alexander polynomials of spiral knots} \label{sec:alexander}

For the sake of notation, throughout this section and the next we will often write $\Delta(t)$ to mean $\Delta_{S(p,q,\varepsilon)}(t)$.

\subsection{Calculating the Seifert matrix}
Let $S(p,q,\varepsilon)$ be a spiral knot. Recall that the Alexander polynomial for any knot can be computed from a Seifert matrix of that knot. To find a form for this matrix given $p$, $q$, and $\varepsilon$, we first obtain a Seifert surface for the spiral knot $S(p,q,\varepsilon)$ using Seifert's algorithm. We refer to the surface given by Seifert's algorithm on the braid form of a spiral knot $S(p,q,\varepsilon)$ as the \textit{cake surface} for $S(p,q,\varepsilon)$, because of the resemblance it bears to a tiered cake; see \autoref{fig:cake}.

\begin{figure}[h]
    \begin{minipage}[c]{0.25\linewidth}   
    \includegraphics[width=\linewidth, ]{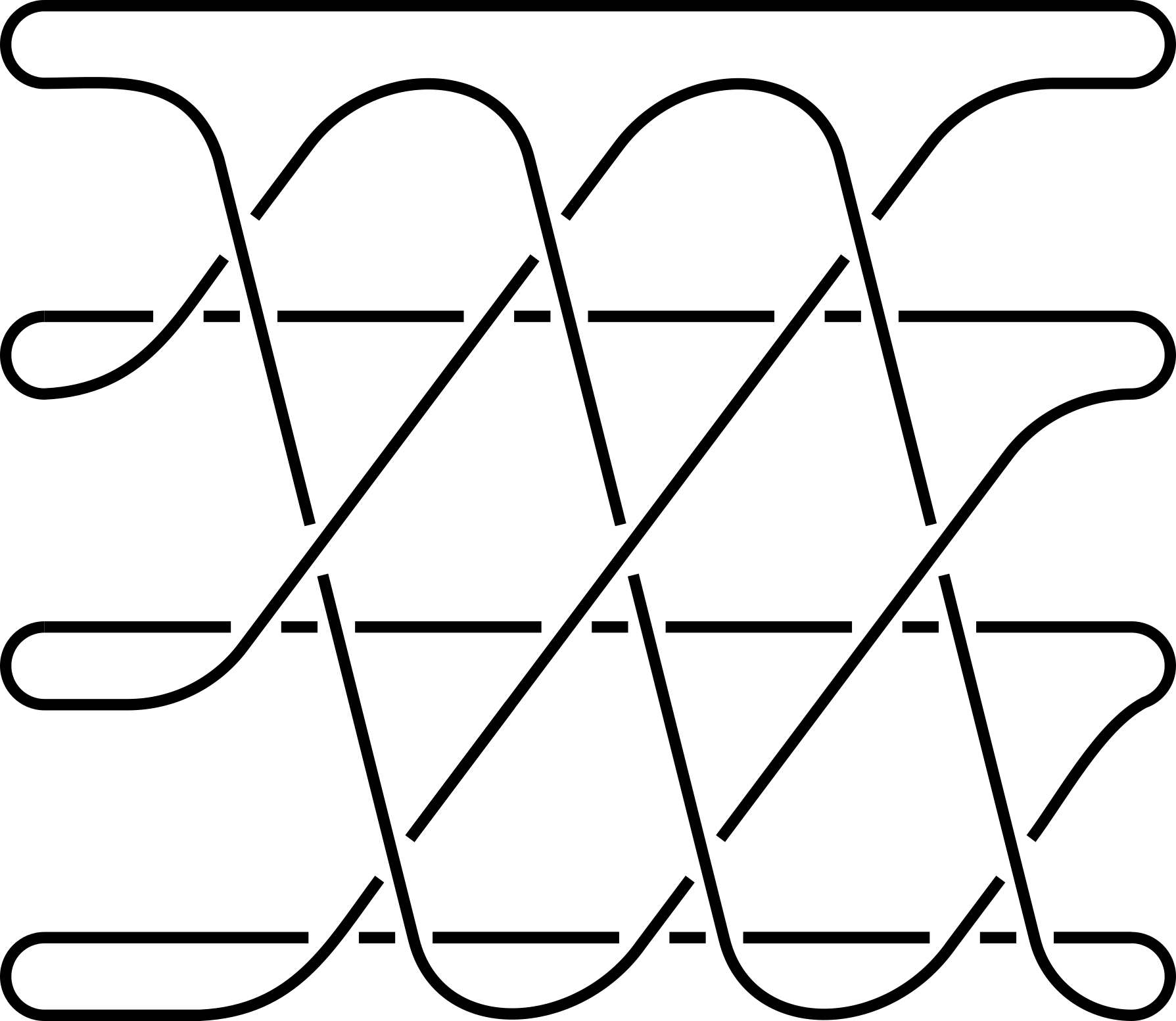}
    \end{minipage}
    \hfill
    \begin{minipage}[c]{0.4\linewidth}
    \includegraphics[width=\linewidth, ]{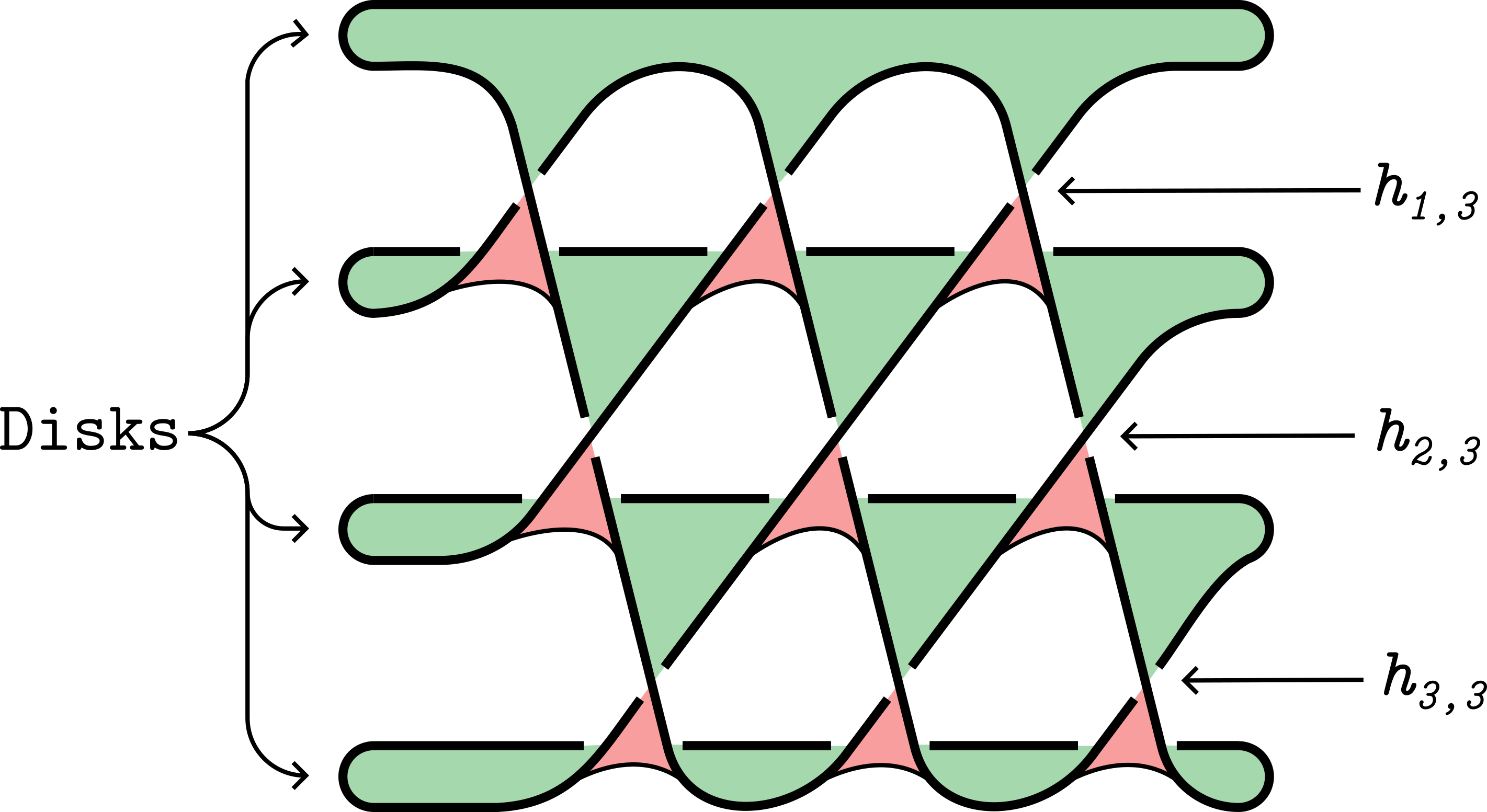}
    \end{minipage}
    \hfill
    \begin{minipage}[c]{0.3\linewidth}
    \includegraphics[width=\linewidth, ]{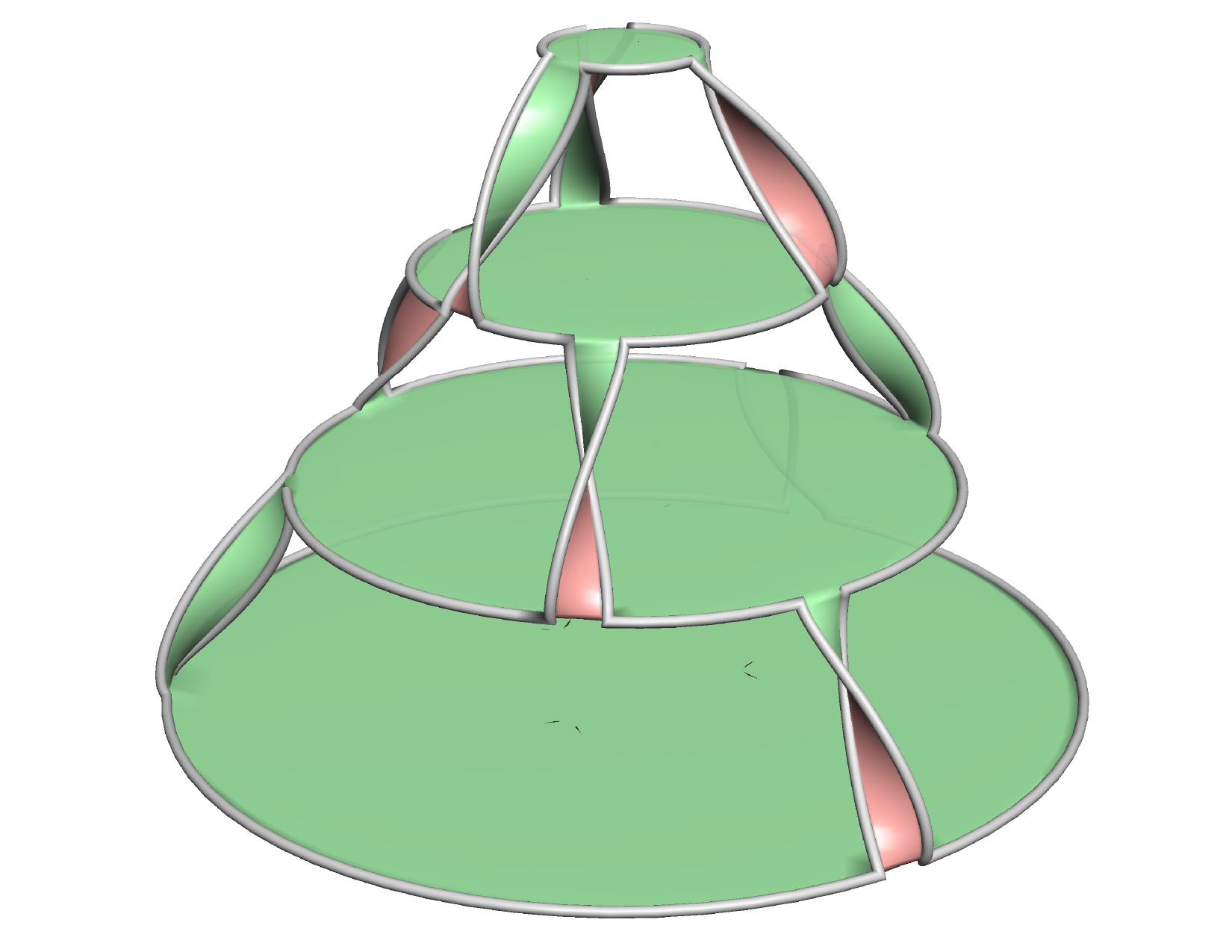}
    \end{minipage}   
    \caption{An example of the ``cake surface'' (center and right) for the spiral knot $S(4,3,(1,-1,1))$ (left), which is a Seifert surface for the knot and is made up of disks and half-twist bands connecting each disk to an adjacent disk. The bands are labeled $h_{i,j}$ according to their vertical and horizontal position. The rightmost ``cake surface'' image was generated using SeifertView \cite{SeifertView}.  \label{fig:cake}}
\end{figure}

The cake surface for $S(p,q,\varepsilon)$ has $p$ disks corresponding to Seifert circles and $q$ half-twist bands connecting each disk to an adjacent disk for $q(p-1)$ total twist bands. The half-twist bands of the cake surface are labeled $h_{ij}$ according to their vertical position $1 \leq i \leq p-1$ and horizontal position $1 \leq j \leq q$.
Because the braid form is periodic, the horizontal position of the band does not affect the direction of the twist of $h_{i,j}$ which is thus dictated by $\varepsilon_i$. 

\begin{definition} \label{def:basis}
The \textit{cake homology basis} of $S(p,q,\varepsilon)$ is the set of (homology classes of) loops $\alpha_{ij}$ on the cake surface of $S(p,q,\varepsilon)$ where $1 \leq i \leq p-1$ and $1 \leq j \leq q-1$, and $\alpha_{ij}$ passes up through the half-twist band $h_{ij}$ once, then down through the half-twist band $h_{i,j+1}$ once, and not through any other bands.\footnote{This is the basis developed by Collins \cite{Collins2016} for use in her Seifert matrix computation algorithm \cite{SeifertMatrixCompuations}.}
\end{definition}

\begin{lemma}\label{lem:basis}
    The cake homology basis is a homology basis for the cake surface of $S(p,q,\varepsilon)$.
\end{lemma}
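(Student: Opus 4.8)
The plan is to show that the cake surface $\Sigma$ deformation retracts onto a graph (a spine), compute the first Betti number $b_1(\Sigma)$ from that graph, and then verify that the $(p-1)(q-1)$ loops $\alpha_{ij}$ are linearly independent in $H_1(\Sigma;\mathbb{Z})$, which by a rank count forces them to be a basis. First I would observe that $\Sigma$ is built from $p$ disks $D_1,\dots,D_p$ (one per Seifert circle) joined by $q(p-1)$ half-twist bands $h_{ij}$, so $\Sigma$ is homotopy equivalent to the graph $G$ whose vertices are the disks and whose edges are the bands. Since $\Sigma$ is connected (the braid $(\prod \sigma_i^{\varepsilon_i})^q$ is a connected braid, so every consecutive pair of disks is joined by at least one band, in fact by exactly $q$ bands), $G$ is a connected graph with $V = p$ vertices and $E = q(p-1)$ edges, hence
\[
b_1(\Sigma) = b_1(G) = E - V + 1 = q(p-1) - p + 1 = (p-1)(q-1).
\]
This matches the number of loops $\alpha_{ij}$, so it remains only to prove independence.

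For independence I would argue combinatorially using the band-incidence data. Each generator $\alpha_{ij}$ traverses exactly the two bands $h_{ij}$ and $h_{i,j+1}$ (in opposite vertical directions) and no others; think of $\alpha_{ij}$ as the $1$-cycle $h_{ij} - h_{i,j+1}$ in the cellular chain complex of $G$. Fix a row $i$: the $q-1$ cycles $\alpha_{i1},\dots,\alpha_{i,q-1}$ are built from the $q$ edges $h_{i1},\dots,h_{iq}$ exactly as the standard basis of the cycle space of a path/cycle on those edges, so within each row they are linearly independent, and moreover the set of bands appearing with nonzero coefficient in a nontrivial combination $\sum_j c_j \alpha_{ij}$ always includes $h_{i1}$ or $h_{iq}$ with a nonzero coefficient unless all $c_j$ vanish. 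Since distinct rows $i$ use disjoint sets of bands $\{h_{i1},\dots,h_{iq}\}$, a global relation $\sum_{i,j} c_{ij}\alpha_{ij} = 0$ in $H_1(\Sigma)$ — equivalently a relation among these $1$-cycles in $G$ after adding a boundary — restricts in each row's edge-coordinates to the zero cycle, forcing all $c_{ij} = 0$. (Concretely: project the cellular $1$-chain onto the coordinates indexed by the bands in row $i$; the cycles from other rows contribute nothing, and boundaries of $G$ are spanned by $\partial D_k = \sum_{\text{bands at } D_k}\pm h$, but row-$i$ bands only touch disks $D_i, D_{i+1}$, so chasing which disk-boundaries can appear still pins down the row-$i$ part.)

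The main obstacle I anticipate is making the last parenthetical rigorous: in $H_1(\Sigma)$ two chains are identified if they differ by a boundary, and the boundaries $\partial D_k$ do mix bands from adjacent rows $i-1$ and $i$ (since disk $D_i$ borders bands $h_{i-1,*}$ above and $h_{i,*}$ below). So the naive "disjoint rows" argument needs care — I would handle it by instead computing the intersection form or, more cleanly, by exhibiting explicit dual cocycles: for each $(i,j)$ define a loop (or a $1$-cocycle) on $\Sigma$ meeting $\alpha_{ij}$ once and all other $\alpha_{i'j'}$ zero times, e.g. the co-core arc of the band $h_{i,j+1}$ pushed off appropriately, paired via the algebraic intersection number on the surface. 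Producing such a dual family immediately gives that the $\alpha_{ij}$ are part of a basis, and combined with the Betti number count $b_1(\Sigma) = (p-1)(q-1)$ they must be a full basis. A cleaner alternative, which I would likely adopt in the write-up, is to cite that this is precisely the basis used by Collins \cite{Collins2016,SeifertMatrixCompuations} and simply verify the Euler-characteristic count plus a one-line dual-arc argument.
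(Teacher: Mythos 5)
Your overall strategy differs from the paper's: the paper proves that the $\alpha_{ij}$ \emph{generate} $H_1$ of the cake surface (deformation retract onto a wedge of $(p-1)(q-1)$ circles with one circle $\beta_{ij}$ per band $h_{ij}$, $j<q$, observe $f_*(\alpha_{ij})=\beta_{ij}-\beta_{i,j+1}$, and telescope to recover each $\beta_{ij}$), and then uses the fact that a generating set of a free abelian group whose cardinality equals the rank is a basis. You instead compute $b_1=(p-1)(q-1)$ (your Euler-characteristic count is correct and matches the paper's) and then aim to prove \emph{linear independence}. This is where the gap is: over $\mathbb{Z}$, a linearly independent set of the right cardinality need not be a basis --- it may generate only a finite-index subgroup (e.g.\ $\{2\}\subset\mathbb{Z}$). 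Since this basis is used to build a Seifert matrix, an honest $\mathbb{Z}$-basis is required, so ``independence plus rank count'' cannot close the argument; you need either generation (the paper's route) or a unimodularity statement.

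Your fallback via dual classes can supply that unimodularity, but as sketched it is not quite right: the co-core of the band $h_{i,j+1}$ does \emph{not} meet $\alpha_{ij}$ once and all other generators zero times, because $\alpha_{i,j+1}$ also passes through $h_{i,j+1}$. What is true is that pairing $\{\alpha_{ij}\}_j$ against the co-cores of $\{h_{ij}\}_{j=1}^{q-1}$ (rel boundary, via Lefschetz duality) gives a bidiagonal matrix with $\pm1$ diagonal entries in each row $i$, and distinct rows use disjoint bands, so the full intersection matrix is block-triangular with determinant $\pm1$; this unimodularity does force the $\alpha_{ij}$ to be a $\mathbb{Z}$-basis once you know the ambient rank is $(p-1)(q-1)$. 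So your approach is salvageable and arguably self-contained, but it requires (i) replacing ``independent, hence a basis'' with ``the pairing matrix against an explicit dual family is unimodular, hence a basis,'' and (ii) correcting the dual family and actually computing that matrix. Your first, purely combinatorial independence argument (projecting onto row coordinates) has the additional problem you yourself flag --- the disk boundaries $\del D_k$ mix adjacent rows --- and in any case would at best prove independence, which is not enough. The paper's generation-first argument avoids all of these issues, which is why it is the shorter path.
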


\begin{figure}[h]
    \begin{minipage}[c]{0.3\linewidth}
    \includegraphics[width=\linewidth,]{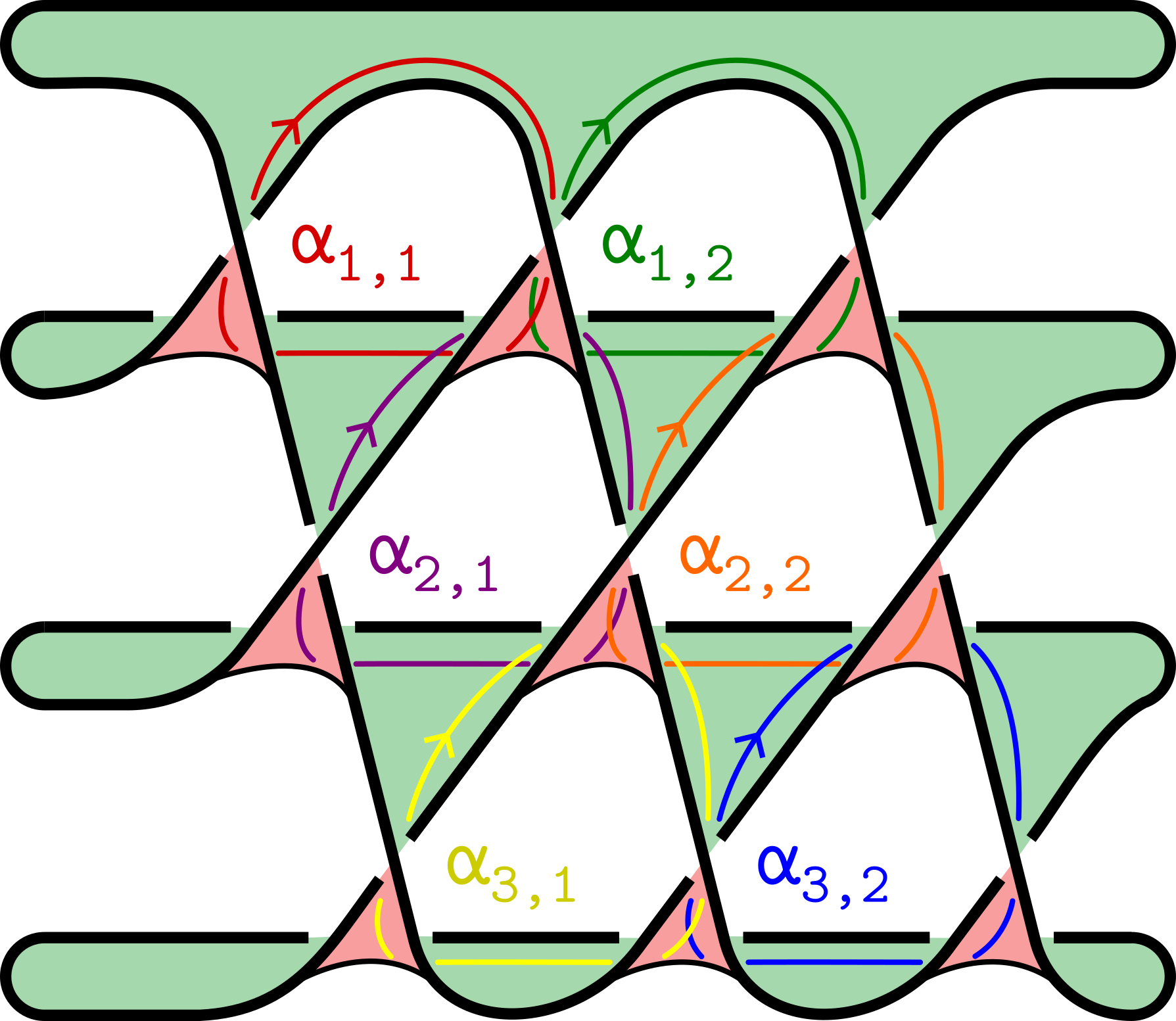}
    \end{minipage}
    \hfill
    \begin{minipage}[c]{0.3\linewidth}
    \includegraphics[width=\linewidth,]{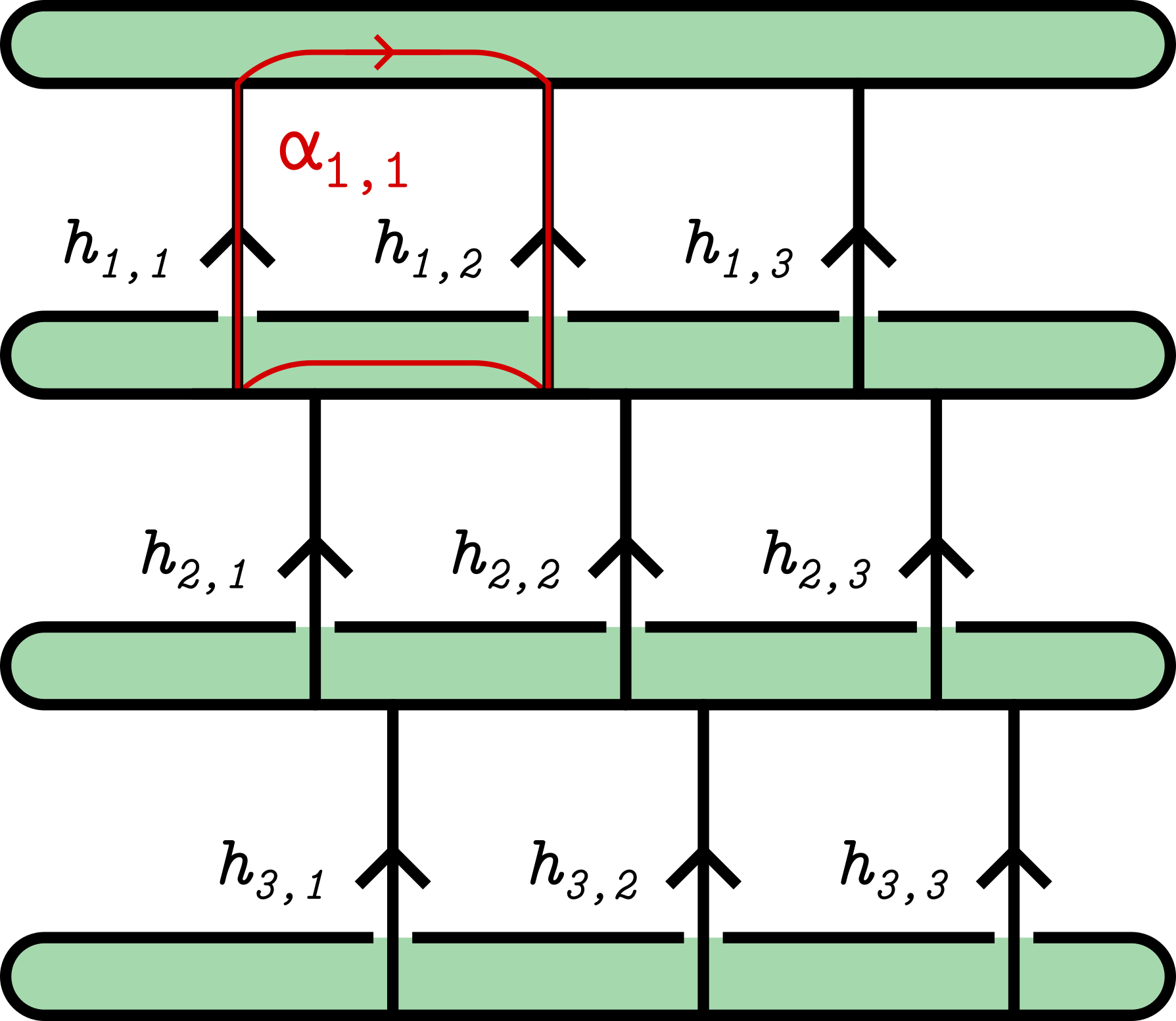}
    \end{minipage}
    \hfill
    \begin{minipage}[c]{0.3\linewidth}
    \includegraphics[width=\linewidth,]{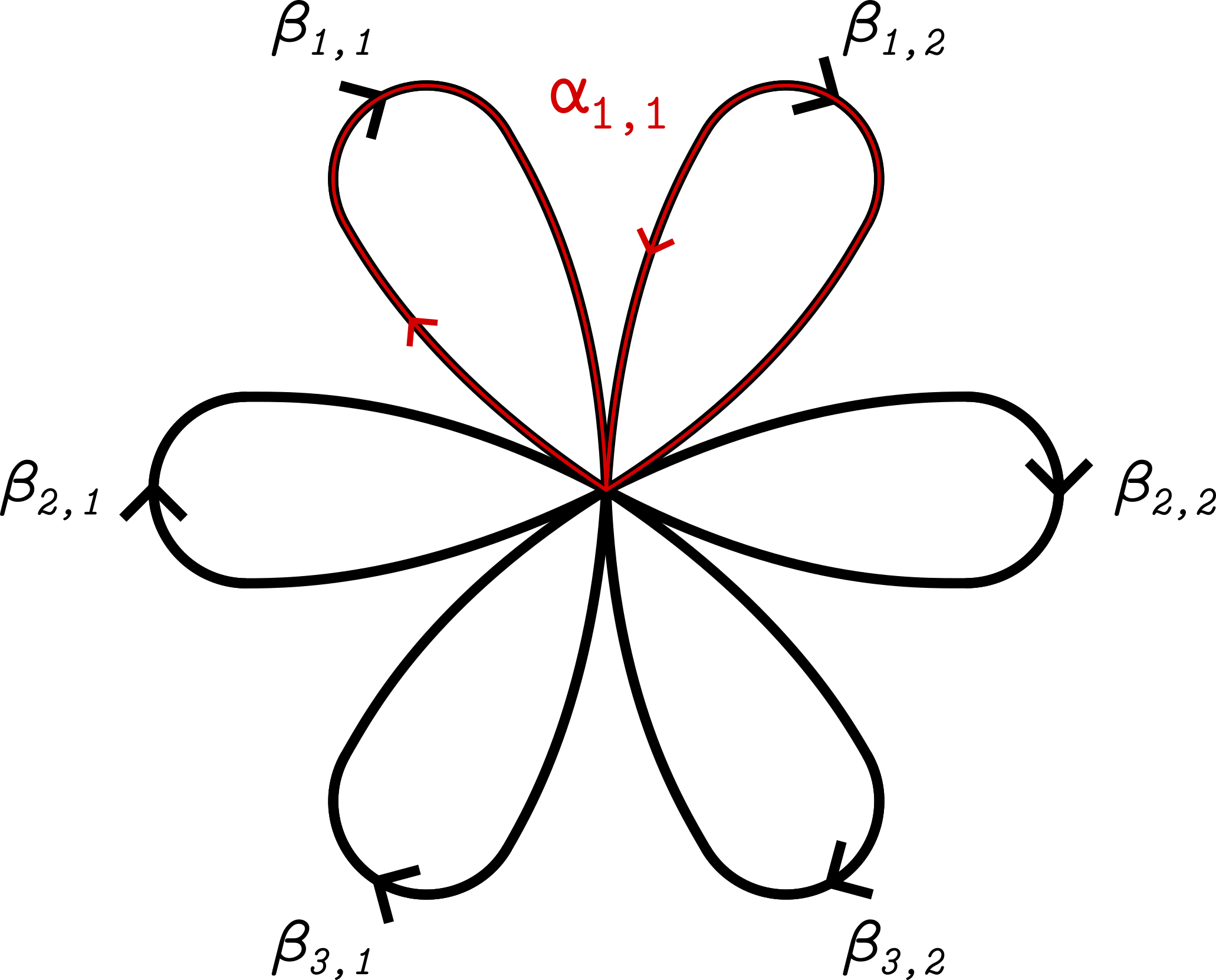}
    \end{minipage}
    \caption{Finding a homology basis for the cake surface. The left shows an example of the cake homology basis $\alpha_{ij}$ on a cake surface. The center and right show a deformation retraction from the cake surface to a wedge of circles, as described in the proof of \autoref{lem:basis}. \label{fig:retract}}
\end{figure}

\begin{proof}
Let $A = \{\alpha_{ij}\}$ be the cake homology basis of $S(p,q,\varepsilon)$, with the loops oriented clockwise as depicted in \autoref{fig:retract} (left), and let $X$ denote the cake surface. Let $Y$ be the wedge of $(p-1)(q-1)$ circles. To show that $A$ generates the first homology of $X$, we describe a deformation retraction of $X$ onto $Y$. First retract every band $h_{ij}$ to a line as shown in \autoref{fig:retract} (left)-(center). The set of $p$ disks along with the half-twist bands $h_{iq}$ form a contractible subspace of $X$. This subspace can be retracted to a point as in \autoref{fig:retract} (center)-(right). These retractions give us a deformation retraction $f \colon X \to Y$. Note that $H_1(Y)$ has rank $(p-1)(q-1)$. 

For each half-twist band $h_{ij}$, let $\beta_{ij}$ be (the homology class of) a loop in $Y$ corresponding to the image of $h_{ij}$ under $f$ with orientation as depicted in \autoref{fig:retract} (center). Let $B = \{\beta_{ij}\}$. Note that the $\beta_{iq}$ are trivial loops in $Y$ for every $i$ and that there is a bijective correspondence between non-trivial $\beta_{ij} \in B$ and factors in the wedge of circles $Y$. It follows that $B$ is a homology basis for $Y$.  

Let $f_* \colon H_1(X) \to H_1(Y)$ be the homomorphism induced by $f$. Note that $f_*(\beta_{iq})=0$ for every $i$. 
Since $\alpha_{ij}$ traverses up $h_{ij}$ then down $h_{i,j+1}$, we have $f_*(\alpha_{ij}) = \beta_{ij} - \beta_{i,j+1}$. Any $\beta_{ij}$ can therefore be written as 
\[\begin{alignedat}{1}\beta_{ij} &= (\beta_{ij} - \beta_{i,j+1}) + (\beta_{i,j+1}-\beta_{i,j+2}) + \cdots + (\beta_{i,q-2}-\beta_{i,q-1}) + (\beta_{i,q-1} + 0) \\
&= f_*(\alpha_{ij}) + f_*(\alpha_{i,j+1}) + \cdots + f_*(\alpha_{i,q-2}) + f_*(\alpha_{i,q-1} ).
\end{alignedat}\]

Thus the subgroup generated by $f_*(A)$ contains $B$, which is a homology basis for $Y$, so  $f_*(A)$ generates $H_1(Y)$. But $f$, being a deformation retraction, is a homotopy equivalence. So the induced map $f_* \colon H_1(X) \to H_1(Y)$ is an isomorphism. It follows that $H_1(X)$ has rank $(p-1)(q-1)$ and $(f_*)^{-1}(f_*(A)) = A$ generates $(f_*)^{-1}(H_1(Y)) = H_1(X)$.
Since $|A|=(p-1)(q-1)=\text{rk}(H_1(X))$, it follows that $A$ is a homology basis for $X$.
\end{proof}

Because the cake homology basis has $(p-1)(q-1)$ elements, the cake surface has genus $(p-1)(q-1)/2$, giving us the following upper bound.

\begin{corollary}\label{lem:upgenus}
The genus $g$ of a spiral knot $S(p,q,\varepsilon)$ satisfies 
\[g(S(p,q,\varepsilon)) \leq \frac{(p-1)(q-1)}{2}.\]
\end{corollary}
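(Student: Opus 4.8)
The plan is to read the bound off directly from \autoref{lem:basis}. The cake surface $X$ of $S(p,q,\varepsilon)$ is, by construction, the result of applying Seifert's algorithm to the braid closure defining the knot, so it is a compact orientable surface smoothly embedded in $S^3$ with $\partial X = S(p,q,\varepsilon)$. I would first note that $X$ is connected: the braid word $\bigl(\prod_{i=1}^{p-1}\sigma_i^{\varepsilon_i}\bigr)^q$ contains each generator $\sigma_i$, so each pair of adjacent Seifert disks is joined by at least one half-twist band. Since $S(p,q,\varepsilon)$ is a knot, $X$ has exactly one boundary component, and hence a connected Seifert surface of genus $g$ satisfies $\text{rk}\, H_1(X) = 2g$ (from $\chi(X) = 1 - \text{rk}\,H_1(X) = 2 - 2g - 1$).

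Next I would invoke \autoref{lem:basis}: the cake homology basis is a basis of $H_1(X)$ and has cardinality $(p-1)(q-1)$, so $\text{rk}\, H_1(X) = (p-1)(q-1)$, and the relation above gives $g(X) = (p-1)(q-1)/2$. (This is an integer: since $\gcd(p,q)=1$, at least one of $p,q$ is odd, so at least one of $p-1,q-1$ is even.) Because the knot genus $g(S(p,q,\varepsilon))$ is by definition the minimum of $g(\Sigma)$ over all Seifert surfaces $\Sigma$ for the knot, and $X$ is such a surface, we conclude $g(S(p,q,\varepsilon)) \le (p-1)(q-1)/2$.

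There is no real obstacle in this argument — it is a two-line consequence of \autoref{lem:basis} — the only points needing a sentence of justification being the connectedness of $X$ and its having a single boundary component, so that the Euler-characteristic bookkeeping yields $\text{rk}\, H_1 = 2g$ rather than $2g + b - 1$ for $b>1$ components. The genuinely substantive half of the genus computation is the reverse inequality $g(S(p,q,\varepsilon)) \ge (p-1)(q-1)/2$ of \autoref{Ther:genus}, which is not addressed here; it will instead follow later from the Alexander polynomial formula of \autoref{Ther:Alex} together with the standard bound $2g(K) \ge \deg \Delta_K$.
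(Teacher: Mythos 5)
Your argument is correct and is essentially the paper's own proof: the paper derives the bound in one sentence from \autoref{lem:basis}, noting that a homology basis of size $(p-1)(q-1)$ forces the cake surface to have genus $(p-1)(q-1)/2$. The extra care you take with connectedness and the single boundary component is a reasonable (if implicit in the paper) bit of bookkeeping, but it does not change the route.
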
 

We can now use the cake homology basis of a spiral knot to find general forms for the Seifert matrix  and Alexander polynomial of a spiral knot. One consequence of this will be to achieve the corresponding lower bound for the genus (see \autoref{Ther:genus}). Throughout this paper, we will adopt the convention that blank entries of a matrix are 0. 

\begin{lemma}\label{lem:matrix}
The Seifert matrix $M \in M_{(p-1)(q-1)}(\mathbb{Z})$ for the cake surface of $S(p,q,\varepsilon)$ has the $(q-1) \times (q-1)$ block tri-diagonal form\footnote{This form for $M$ was discovered using \cite{SeifertMatrixCompuations}.}
\[M = \begin{bmatrix} 
-(K+L)& K & & \\
L & \ddots & \ddots & \\
& \ddots & \ddots & K \\
 & & L & -(K+L)
\end{bmatrix}
\]
where $K = [k_{ij}]$ and $L = [l_{ij}]$ are $(p-1)\times(p-1)$ submatrices defined by 
\[k_{ij} = \begin{cases} -1, & j = i,\ \varepsilon_i = -1 \\
0, & \text{otherwise} \end{cases} \text{ and } l_{ij} = \begin{cases} 1, & j = i,\ \varepsilon_i = 1 \\
-1, & j = i+1 \\
0, & \text{otherwise} \end{cases}.\]    
\end{lemma}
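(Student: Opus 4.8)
The plan is to compute $M$ directly, entry by entry, as $M_{(ij),(kl)}=\lk(\alpha_{ij}^+,\alpha_{kl})$, where $\alpha_{ij}^+$ is the pushoff of $\alpha_{ij}$ to the positive side of the cake surface $\Sigma$; I fix orientations once and for all, orienting $\Sigma$ so that all $p$ disks $D_1,\dots,D_p$ are coherently oriented and orienting each $\alpha_{ij}$ as in \autoref{fig:retract}. The organizing principle is a support observation: $\alpha_{ij}$ can be isotoped inside $\Sigma$ into an arbitrarily small neighborhood of $h_{ij}\cup h_{i,j+1}$ together with two short arcs, one on $D_i$ and one on $D_{i+1}$, and likewise for $\alpha_{kl}$. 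Hence the two pushoffs are disjoint, so the matrix entry vanishes, unless these neighborhoods meet; this happens only when $k\in\{i-1,i,i+1\}$ (the two loops lie on overlapping disks) and, since the supporting bands lie in periods $\{j,j+1\}$ and $\{l,l+1\}$, only when $|j-l|\le 1$. This already forces $M$ into the stated $(q-1)\times(q-1)$ block-tridiagonal shape, with $(p-1)\times(p-1)$ blocks indexed by the period coordinate $j$ and entries within a block indexed by the tier coordinate $i$; what remains is to evaluate the entries case by case.

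For the same-tier case $k=i$, the subsurface built from $D_i$, $D_{i+1}$, the bands $h_{i,1},\dots,h_{i,q}$, and the loops $\alpha_{i,1},\dots,\alpha_{i,q-1}$ is (up to isotopy) the surface produced by Seifert's algorithm on the $2$-strand braid $(\sigma_1^{\varepsilon_i})^q$ with its standard loop basis; so for $|j-l|\le 1$ the numbers $\lk(\alpha_{ij}^+,\alpha_{il})$ are the classical entries of the Seifert matrix of the $(2,q)$ torus link (when $\varepsilon_i=1$) or of its mirror (when $\varepsilon_i=-1$), which one reads off from the single half-twist carried by each band. This produces exactly the diagonal entries $-(k_{ii}+l_{ii})$ and the adjacent-period entries $k_{ii}$ (one period forward) and $l_{ii}$ (one period back), that is, the diagonal entries of the submatrices $K$ and $L$ together with the claimed tridiagonal structure in the $j$-direction.

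The adjacent-tier case is the heart of the argument. Take $k=i+1$. Then $\alpha_{ij}$ and $\alpha_{i+1,l}$ can only meet along the shared disk $D_{i+1}$, on which the arc of $\alpha_{ij}$ joins the feet of $h_{ij}$ and $h_{i,j+1}$, while the arc of $\alpha_{i+1,l}$ joins the feet of $h_{i+1,l}$ and $h_{i+1,l+1}$. Reading off the cyclic order of the band feet along $\partial D_{i+1}$ from the braid diagram — within each period the band in row $i$ precedes the band in row $i+1$ — one checks that exactly one of the two feet of $\alpha_{i+1,l}$'s arc lies on the subarc of $\partial D_{i+1}$ cut off by $\alpha_{ij}$'s arc precisely when $l\in\{j-1,j\}$; so the two arcs cross exactly once in those two cases, with opposite intersection signs, and not at all otherwise. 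Since $\lk(a^+,b)-\lk(b^+,a)$ equals the intersection number of $a$ and $b$ on $\Sigma$ for any two curves $a,b$, each such crossing contributes $\pm 1$ to exactly one of $\lk(\alpha_{ij}^+,\alpha_{i+1,l})$ and $\lk(\alpha_{i+1,l}^+,\alpha_{ij})$; a local inspection of which side the pushoff leaves the surface at the intersection point pins this down as the first one, equal to $+1$ when $l=j$ and $-1$ when $l=j-1$ (and $0$ otherwise). Combined with the self-pairings $\lk(\alpha_{ij}^+,\alpha_{ij})=-(k_{ii}+l_{ii})$ coming from the band framings, this yields the entry $1$ in the $(i,i+1)$ slot of each diagonal block, the entry $-1$ in the $(i,i+1)$ slot of each copy of $L$, and $0$ for all remaining tier-adjacent entries — matching the definitions of $K$ and $L$ and completing the identification of $M$.

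The main obstacle I anticipate is exactly this adjacent-tier analysis: the vanishing statement and the same-tier reduction are essentially routine once conventions are fixed, but the adjacent-tier entries require carefully extracting the cyclic order of band feet from the diagram and, above all, getting every sign right — in particular explaining the asymmetry whereby the pushoff of the lower-tier loop links the higher-tier loop but not conversely. I would manage this by fixing one explicit model diagram, carrying out the sign bookkeeping there in full, and then cross-checking the resulting matrix against small cases such as the trefoil $S(3,2,(1,1))$ and $S(3,4,(1,1))=T_{3,4}$.
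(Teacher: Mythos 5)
Your overall strategy coincides with the paper's: write $M$ in $(q-1)\times(q-1)$ blocks indexed by the period coordinate, kill the far-apart blocks by observing that $\alpha_{ij}$ meets only the pushoffs of loops supported on adjacent tiers and adjacent periods, and then evaluate the few surviving linking numbers. Your two refinements are genuine improvements in exposition: identifying the same-tier entries with the classical Seifert matrix of the $(2,q)$ torus link on the subsurface $D_i\cup D_{i+1}\cup h_{i,1}\cup\dots\cup h_{i,q}$, and using the identity $\lk(a^+,b)-\lk(b^+,a)=a\cdot b$ to explain \emph{why} exactly one of the two mixed adjacent-tier pairings is nonzero. The paper instead reads all entries of the three blocks $M_{11},M_{12},M_{21}$ directly off the diagram and invokes periodicity to get $M_{nm}=M_{n+1,m+1}$; your route is the same computation, organized so that only one sign per case requires inspection.

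The one substantive problem is a convention/sign mismatch that, as written, lands you on $M^T$ rather than the stated $M$. You define the entry as $\lk(\alpha_{ij}^+,\alpha_{kl})$, whereas the paper uses $\lk(\alpha_{ij},\alpha_{kl}^+)$; since $M-M^T$ is the nondegenerate intersection form, these are genuinely different matrices, not a cosmetic choice. Concretely, the paper computes $\lk(\alpha_{i,j},\alpha_{i+1,j}^+)=1$ while $\lk(\alpha_{i+1,j},\alpha_{i,j}^+)=0$; by symmetry of the linking number the latter equals your $\lk(\alpha_{ij}^+,\alpha_{i+1,j})$, which you assert is $+1$. So either your local inspection of the pushoff direction is on the wrong side, or your implicit co-orientation of the surface is opposite to the paper's; with a single fixed co-orientation, your convention together with a correct inspection yields the transpose (the $+1$ in the $(i+1,i)$ slot of the diagonal blocks, $L$ above the diagonal and $K$ below), and the asymmetry is that the pushoff of the \emph{higher}-tier loop links the lower-tier loop, not the reverse as you state. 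This is harmless downstream, since $\det(M^T-tM)$ agrees with $\det(M-tM^T)$ up to units, but it does not literally establish the lemma as stated. Switching to the convention $\lk(\alpha_{ij},\alpha_{kl}^+)$, or equivalently re-running the local sign check on your model diagram, fixes it; your own proposed cross-check against $S(3,2,(1,1))$ would expose the discrepancy, so be sure to actually carry that step out.
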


\begin{proof}
Consider $S(p,q,\varepsilon)$ and its cake homology basis given by \autoref{def:basis}. Then the Seifert matrix $M$ for the cake surface of $S(p,q,\varepsilon)$ can be expressed in the $(q-1) \times (q-1)$ block form
\[M = \begin{bmatrix} 
M_{11} & \dots & M_{1,q-1} \\
\vdots & \ddots & \vdots \\
M_{q-1,1} & \dots & M_{q-1,q-1} \\
\end{bmatrix}\]
where $M_{nm}$ is a $(p-1) \times (p-1)$ matrix defined by 
\[M_{nm} = [\lk(\alpha_{i,n}, \alpha_{j,m}^+)]  = \begin{bmatrix} 
\lk(\alpha_{1,n},\alpha_{1,m}^+) & \dots & \lk(\alpha_{1,n},\alpha_{p-1,m}^+)\\
\vdots & \ddots & \vdots \\
\lk(\alpha_{p-1,n},\alpha_{1,m}^+) & \dots & \lk(\alpha_{p-1,n},\alpha_{p-1,m}^+)\\
\end{bmatrix}\]
and $\lk(\alpha,\alpha')$ denotes the linking number of loops $\alpha$ and $\alpha'$. See \autoref{fig:loops}.

\begin{figure}[h]
\centering
    \includegraphics[width=.5\linewidth]{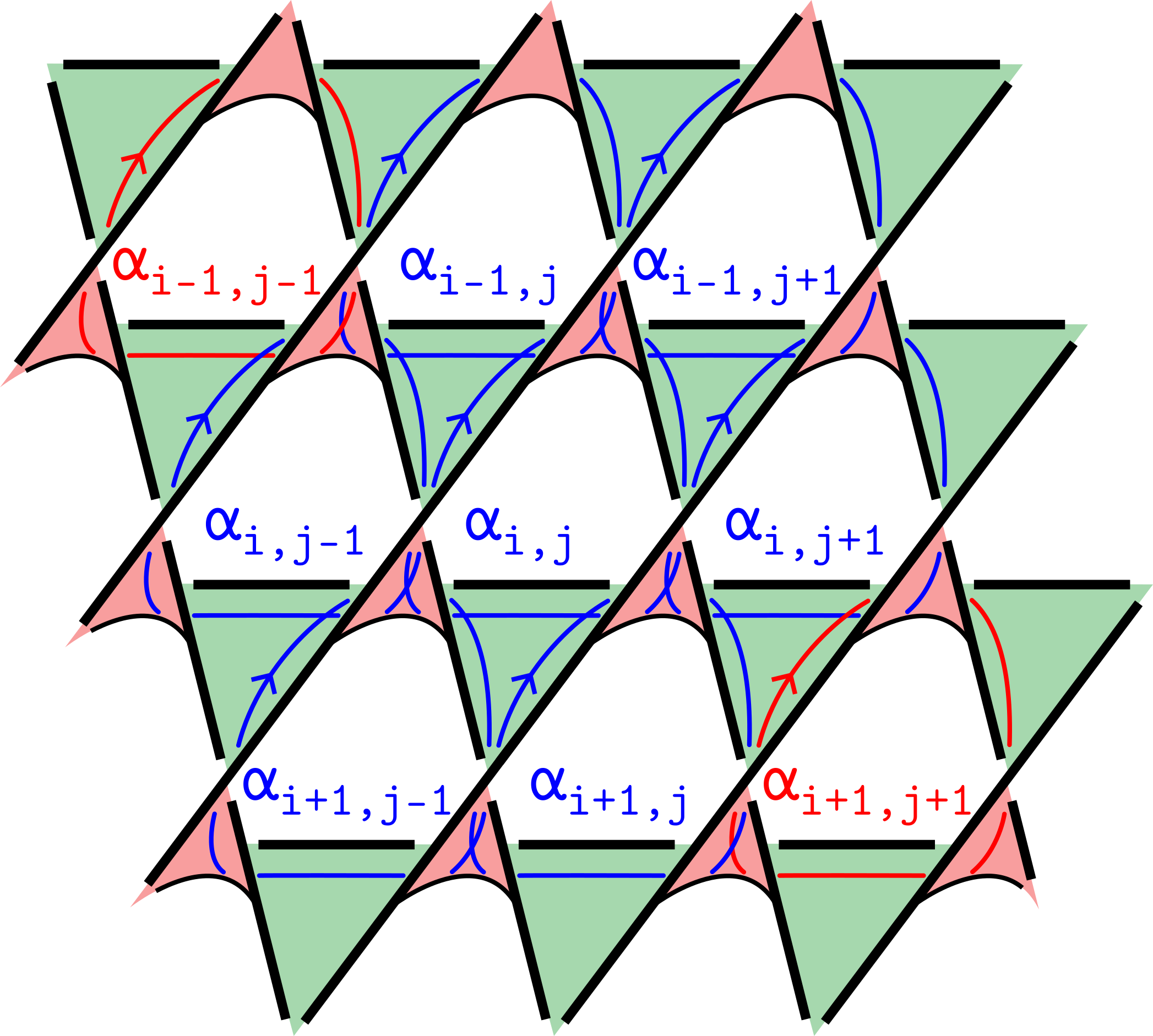}
    \caption{A portion of the cake homology basis on a cake surface, used for reference in the construction of the Seifert matrix in the proof of \autoref{lem:matrix}. \label{fig:loops}}
\end{figure}

Note that by construction, the loop $\alpha_{ij}$ will only intersect the loops $\alpha^+_{ij}$, $\alpha^+_{i,j\pm 1}$, $\alpha^+_{i\pm1,j}$, and $\alpha^+_{i \mp 1, j \pm 1}$. 
It follows that $\lk(a_{ij}, a^+_{i',j'})$ will be zero when $|j-j'| > 1$, and therefore $M_{nm} = 0$ if $|n-m| > 1$. Since half-twist bands with the same vertical position $i$ twist in the same direction, it follows that $\lk(\alpha_{i,j},\alpha_{i',j'}^+) =  \lk(\alpha_{i,j+1},\alpha_{i',j'+1}^+)$, which implies that $M_{nm} = M_{n+1,m+1}$. Thus $M$ has the form 
\[M = \begin{bmatrix} 
M_{11} & M_{12} & & \\
M_{21} & \ddots & \ddots & \\
& \ddots & \ddots & M_{12} \\
 & & M_{21} & M_{11}
\end{bmatrix}
.\]
We will now compute $M_{21}$, $M_{11}$, and $M_{12}$:
 \[(M_{21})_{ij} = \lk(\alpha_{i,2},\alpha_{j,1}^+) = \begin{cases}
\lk(\alpha_{i,2},\alpha_{i,1}^+),  & j = i, \varepsilon_i = 1 \\
\lk(\alpha_{i,2},\alpha_{i,1}^+),  & j = i, \varepsilon_i = -1 \\
\lk(\alpha_{i,2},\alpha_{i+1,1}^+),  & j = i+1 \\
\lk(\alpha_{i,2},\alpha_{j,1}^+),  & \text{otherwise}
\end{cases} 
= \begin{cases}
1, & j = i, \varepsilon_i = 1 \\
0, & j = i, \varepsilon_i = -1 \\
-1, & j = i+1 \\
0, & \text{otherwise},
\end{cases}\]
\[(M_{11})_{ij} = \lk(\alpha_{i,1},\alpha_{j,1}^+) = \begin{cases}
 \lk(\alpha_{i,1},\alpha_{i-1,1}^+),  & j = i-1 \\
 \lk(\alpha_{i,1},\alpha_{i,1}^+), & j = i, \varepsilon_i = 1 \\
 \lk(\alpha_{i,1},\alpha_{i,1}^+),  & j = i, \varepsilon_i = -1 \\
 \lk(\alpha_{i,1},\alpha_{i+1,1}^+),  & j = i+1 \\
 \lk(\alpha_{i,1},\alpha_{j,1}^+),  & \text{otherwise}
\end{cases}
= \begin{cases}
0, & j = i - 1 \\
-1, & j = i , \varepsilon_i = 1 \\
1, & j = i , \varepsilon_i = -1 \\
1, & j = i + 1 \\
0, & \text{otherwise},
\end{cases}\]
 \[(M_{12})_{ij} = \lk(\alpha_{i,1},\alpha_{j,2}^+) = \begin{cases}
 \lk(\alpha_{i,1},\alpha_{i-1,2}^+), & j = i - 1 \\
 \lk(\alpha_{i,1},\alpha_{i,2}^+), & j = i, \varepsilon_i = 1 \\
 \lk(\alpha_{i,1},\alpha_{i,2}^+), & j = i, \varepsilon_i = -1 \\
 \lk(\alpha_{i,1},\alpha_{j,2}^+), & \text{otherwise}
\end{cases}
= \begin{cases}
0, & j = i - 1 \\
0, & j = i, \varepsilon_i = 1 \\
-1, & j = i, \varepsilon_i = -1 \\
0, & \text{otherwise}.
\end{cases}\]
Setting $L = M_{21}$ and $K = M_{12}$ gives the result. 
\end{proof}

\subsection{Main theorem}
We now present and prove our main theorem. 

\Alex

The simple recursive structure of this formula allows us to more easily track the effects of changes in $q$ and $\varepsilon$ than directly computing the Alexander polynomial from the Seifert matrix, illuminating several properties about both the degree and coefficients of the polynomial, detailed in \autoref{subsec:properties}. Note that although it may not seem so at first glance, the $\mathcal{C}_k$ do depend heavily on $\varepsilon$, as their recursive definition depends on $\mu$, which in turn depends on $\varepsilon$. As a convention, we define $\mu(i)=0$ when $i\notin\{1,\dots,p-1\}$ in order to make certain arguments more concise.

Our results throughout this paper are phrased for the case of knots, but \autoref{Ther:Alex} also holds for the single-variable Alexander polynomials of spiral links. 
We discuss this further in \autoref{sec:future}.

\begin{proof} 
Consider a tri-diagonal block matrix with no corners, that is, a matrix of the form 
\[\begin{bmatrix} 
A_1 & B_1 & & \\
C_1 & \ddots & \ddots & \\
& \ddots & \ddots & B_{n-1} \\
 & & C_{n-1} & A_{n}
\end{bmatrix}
\]
where $A_i,B_i,C_i$ are $m\times m$ matrices. A result of Molinari \cite{Molinari_2008} states that the determinant of this matrix is 
\[(-1)^{mn} \det (R_n) \det (B_1\cdot \cdot \cdot B_{n-1})\]
where $R_n$\footnote{Molinari denotes $R_n$ as $T(n)_{11}$.} satisfies the recurrence relation
\[R_k = -B_k^{-1}A_kR_{k-1} - B_k^{-1}C_{k-1}R_{k-2}\]
with $R_0 = I_{m}$ and $R_1 = -B_1^{-1}A_1$. 

Let $M$ be the Seifert matrix for the cake homology basis of $S(p,q,\varepsilon)$. \autoref{lem:matrix} shows that $M-tM^T$ has the form
\[M-tM^T = \begin{bmatrix} 
-(K+L)+t(K+L)^T& K-tL^T & & \\
L-tK^T & \ddots & \ddots & \\
& \ddots & \ddots & K-tL^T \\
 & & L-tK^T & -(K+L)+t(K+L)^T
\end{bmatrix}.
\]
Note that $M-tM^T$ is a tri-diagonal block matrix with no corners, with  
\begin{align*}
    A_1 &= \cdots = A_{q-1} = -(K+L)+t(K+L)^T
    \\B_1 &= \cdots = B_{q-2} = K-tL^T
    \\C_1 &= \cdots = C_{q-2} = L-tK^T
\end{align*}
(where here $m=p-1$ and $n = q-1$). We can substitute these submatrices into the equation for $R_k$ to get 
\[\Delta(t) = \det(M-tM^T) = (-1)^{(p-1)(q-1)}\det(R_{q-1})\det((K-tL^T)^{q-2}).\]
Let $\mathbf{A} = B_k^{-1}C_k = (K-tL^T)^{-1}(L-tK^T)$.
It follows that for all $k=1,\ldots,q-1$,
\[\begin{alignedat}{1}
-B_k^{-1}A_k &= -(K-tL^T)^{-1}(-K-L+t(K+L)^T) \\
&= (K-tL^T)^{-1}(K+L-t(K+L)^T) \\
&= (K-tL^T)^{-1}(K-tL^T+L-tK^T) \\
&= (K-tL^T)^{-1}(K-tL^T)+(K-tL^T)^{-1}(L-tK^T) \\
&= I+\mathbf{A} .\\
\end{alignedat}\]

We induct on $k$ to show that $R_k=I+\mathbf{A}+ \cdots +\mathbf{A}^k$.
This holds for $k=0,1$ since $R_0 = I$ and $R_1 = -B_1^{-1}A_1 = I+\mathbf{A}$. 
Suppose that $R_i = I + \mathbf{A} + \cdots + \mathbf{A}^i$ for all $i < k$. Then 
\[\begin{alignedat}{1}
R_k &= -B_k^{-1}A_k R_{k-1} - B_k^{-1}C_{k-1}R_{k-2} \\
&= (I+\mathbf{A})R_{k-1}-\mathbf{A}R_{k-2} \\
&= R_{k-1} + \mathbf{A}(R_{k-1} - R_{k-2}) \\
&= I + \mathbf{A} + \cdots + \mathbf{A}^{k-1} + \mathbf{A}(\mathbf{A}^{k-1})\\
&= I + \mathbf{A} + \cdots + \mathbf{A}^k. \\
\end{alignedat}\]
Thus $R_{q-1} = I + \mathbf{A}+ \cdots + \mathbf{A}^{q-1}$. Note that $K-tL^T$ is a lower triangular matrix with $-t$'s and $-1$'s on the diagonal. So $\det((K-tL^T)^{q-2})= \pm t^n$ for some $n \in \mathbb{N}$. Since the Alexander polynomial is defined up to multiplication by $\pm t^\ell$ for $\ell \in \mathbb{Z}$, we have
\[\Delta(t) = \det(M-tM^T)=\det(R_{q-1}) = \det(I+ \mathbf{A} + \cdots + \mathbf{A}^{q-1}). \]

Let $\mathbf{A} = P J P^{-1}$, where $J$ is the Jordan normal form of $\mathbf{A}$. Then we have
\[\begin{alignedat}{1}
\Delta(t) &= \det(I + \mathbf{A}+ \cdots + \mathbf{A}^{q-1}) \\
&= \det(P J^0 P^{-1} + PJP^{-1}+ \cdots + P J^{q-1} P^{-1}) \\
&= \det(P(J^0 + \cdots +J^{q-1} )P^{-1}) \\
&= \det(J^0 + \cdots +J^{q-1} ). \\
\end{alignedat}\]
Let $\lambda_m$ for $1 \leq m \leq p-1$ be the eigenvalues of $\mathbf{A}$. Then the matrix $J^0 + \cdots +J^{q-1}$ is an upper triangular matrix with diagonal entries $[J^0+\cdots+J^{q-1}]_{ii}=\lambda_i^0 + \cdots + \lambda_i^{q-1}$. The determinant is the product of these entries:
\[
\Delta(t) = \det(J^0 + \cdots +J^{q-1} ) = \prod\limits_{m = 1}^{p-1} (\lambda_m^0 + \cdots + \lambda_m^{q-1}) = \prod\limits_{m = 1}^{p-1} \sum_{j=0}^{q-1}\lambda_m^j.
\]
Noting that the polynomial $f(y)=\sum_{j=0}^{q-1} y^j$ has $q-1$ roots given by $e^\frac{2 \pi \ell i}{q}$ where  $\ell \in \mathbb{Z}$ and $1 \le \ell \le q-1$, and the characteristic polynomial $\chi(x)$ of $\mathbf{A}$ has roots given by the eigenvalues $\lambda_m$ where $m \in \mathbb{Z}$ and $1 \le m \le p-1$, we can factor both of these polynomials.
\begin{align*}
    f(y) &= (y-e^\frac{2\pi i}{q})(y-e^\frac{2\pi 2 i}{q})\cdots(y-e^\frac{2\pi (q-1) i}{q}) = \prod_{\ell=1}^{q-1}(y-e^\frac{2\pi \ell i}{q})\\
    \chi(x) &= (x-\lambda_1)(x-\lambda_2)\cdots(x-\lambda_{p-1}) = \prod_{m=1}^{p-1}(x-\lambda_m)
\end{align*}

Thus, we can simplify the determinant of $J^0+\cdots+J^{q-1}$ to
\[
\prod\limits_{m = 1}^{p-1} \sum_{j=0}^{q-1}\lambda_m^j
=\prod\limits_{m = 1}^{p-1} f(\lambda_m)
= \prod\limits_{m = 1}^{p-1} \prod_{\ell = 1}^{q-1} (\lambda_m - e^\frac{2\pi \ell i}{q})
=(-1)^{(p-1)(q-1)}\prod\limits_{m = 1}^{p-1} \prod_{\ell = 1}^{q-1} (e^\frac{2\pi \ell i}{q} - \lambda_m).
\]
Because $\gcd(p,q)=1$, at least one of $p$ or $q$ must be odd, so $(p-1)(q-1)$ is even.\footnote{Note that even if we did not have $\gcd(p,q)=1$, that is, if we had a spiral \textit{link}, this would not cause a problem here as the Alexander polynomial is defined up to multiplication by $\pm 1$.} Furthermore, we can switch the order of the products to get
\[
(-1)^{(p-1)(q-1)}\prod\limits_{m = 1}^{p-1} \prod_{\ell = 1}^{q-1} (e^\frac{2\pi \ell i}{q} - \lambda_m)
= \prod_{\ell = 1}^{q-1} \prod\limits_{m = 1}^{p-1} (e^\frac{2\pi \ell i}{q} - \lambda_m)
= \prod_{\ell = 1}^{q-1} \chi(e^\frac{2\pi \ell i}{q}).
\]
In \autoref{sec:lemmaproofs} we will prove the following lemma. 
\begin{restatable}{lemma}{char}
\label{lem:char}%
The characteristic polynomial $\chi$ of $\mathbf{A}$ is given by $\chi(x) = \mathcal{C}_{p-1}(x,t)$.
\end{restatable}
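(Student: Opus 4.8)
The plan is to compute the characteristic polynomial $\chi(x)=\det(xI-\mathbf{A})$ of $\mathbf{A}=(K-tL^{T})^{-1}(L-tK^{T})$ directly, reducing it to the determinant of an explicit tridiagonal matrix, and then to recognize the classical three-term recurrence for such determinants as a rescaling of the recurrence defining $\mathcal{C}_{k}$.

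First I would record explicit bidiagonal forms for the two blocks from \autoref{lem:matrix}. Since $K$ is diagonal and $L$ is lower-bidiagonal (diagonal plus superdiagonal, with $K^{T}=K$), the matrix $K-tL^{T}$ is lower-bidiagonal and $L-tK^{T}=L-tK$ is upper-bidiagonal. A one-line case check on $\varepsilon_i=\pm1$, using the definition of $\mu$, shows that $K-tL^{T}$ has diagonal entries $-t/\mu(i)$ and subdiagonal entries $t$, while $L-tK$ has diagonal entries $\mu(i)$ and superdiagonal entries $-1$. From $(K-tL^{T})(xI-\mathbf{A})=x(K-tL^{T})-(L-tK^{T})$ it follows that
\[\chi(x)=\det\!\big((K-tL^{T})^{-1}\big)\cdot\det\!\big(x(K-tL^{T})-(L-tK^{T})\big).\]
Set $N=x(K-tL^{T})-(L-tK^{T})$. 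By the bidiagonal forms above, $N$ is tridiagonal with $N_{ii}=-\big(xt/\mu(i)+\mu(i)\big)$, $N_{i,i-1}=xt$, and $N_{i,i+1}=1$, and moreover $\det\!\big((K-tL^{T})^{-1}\big)=\prod_{i=1}^{p-1}\big(-\mu(i)/t\big)=(-1)^{p-1}t^{-(p-1)}\prod_{i=1}^{p-1}\mu(i)$.

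Next, let $D_{k}$ be the determinant of the leading $k\times k$ principal submatrix of $N$, so $D_{0}=1$ and $D_{p-1}=\det N$. Expanding along the last row gives the standard tridiagonal recurrence $D_{k}=N_{kk}D_{k-1}-N_{k-1,k}N_{k,k-1}D_{k-2}=-\big(xt/\mu(k)+\mu(k)\big)D_{k-1}-xt\,D_{k-2}$. I would then introduce the rescaled quantities $E_{k}=(-1)^{k}t^{-k}\big(\prod_{i=1}^{k}\mu(i)\big)D_{k}$, so that $\chi(x)=E_{p-1}$, check the base cases $E_{0}=1=\mathcal{C}_{0}$ and $E_{1}=\mu(1)^{2}/t+x=\mathcal{C}_{1}$, and verify that multiplying the $D_{k}$-recurrence through by $(-1)^{k}t^{-k}\prod_{i=1}^{k}\mu(i)$ (pulling $\mu(k)$, respectively $\mu(k-1)\mu(k)$, out of the product to match the $E_{k-1}$, $E_{k-2}$ normalizations) yields exactly $E_{k}=\big(\tfrac{\mu(k)^{2}}{t}+x\big)E_{k-1}-\tfrac{\mu(k-1)\mu(k)x}{t}E_{k-2}$, i.e. the defining recurrence of $\mathcal{C}_{k}$. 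Induction then gives $E_{k}=\mathcal{C}_{k}(x,t)$ for all $k$, and in particular $\chi(x)=E_{p-1}=\mathcal{C}_{p-1}(x,t)$.

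The only real work is bookkeeping: verifying the entries of $K-tL^{T}$ and $L-tK^{T}$ uniformly across the two cases $\varepsilon_i=\pm1$ (this is exactly where the $\mu$ notation pays off), and confirming that the signs and powers of $t$ in $E_{k}$ are precisely those needed to convert the bare tridiagonal recurrence into the $\mathcal{C}_{k}$ recurrence. One minor point to note is that all of this takes place over $\mathbb{Q}(t)$ (equivalently $\mathbb{Z}[t^{\pm1}]$), where $K-tL^{T}$ is genuinely invertible since $\det(K-tL^{T})=\pm t^{\#\{i:\varepsilon_i=1\}}\neq0$; but this invertibility is already used implicitly in the definition of $\mathbf{A}$ in the proof above.
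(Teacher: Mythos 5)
Your proof is correct, but it takes a genuinely different route from the paper's. The paper first computes $\mathbf{A}=B^{-1}C$ explicitly entry by entry (it is a lower Hessenberg matrix with a dense lower-triangular part), and then evaluates $\det(xI-\mathbf{A})$ by a cofactor expansion that requires introducing the auxiliary matrices $\widehat{P}_k$ (obtained by deleting the second-to-last row and last column) and deriving a coupled recurrence between $\det(P_k)$ and $\det(\widehat{P}_k)$ before these combine into the three-term recurrence for $\mathcal{C}_k$. You instead never compute $\mathbf{A}$ at all: writing $\chi(x)=\det(B)^{-1}\det(xB-C)$ converts the problem into the determinant of the genuinely tridiagonal matrix $N=xB-C$, for which the classical three-term recurrence on leading principal minors applies immediately, and the diagonal rescaling $E_k=(-1)^k t^{-k}\bigl(\prod_{i=1}^k\mu(i)\bigr)D_k$ turns that recurrence into exactly the defining recurrence for $\mathcal{C}_k$ with matching base cases (I checked the bookkeeping: the first term becomes $\bigl(x+\mu(k)^2/t\bigr)E_{k-1}$ and the second $-\tfrac{\mu(k-1)\mu(k)x}{t}E_{k-2}$, and $E_{p-1}=\det(B^{-1})\det(N)=\chi(x)$). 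Your identification of the entries of $B=K-tL^T$ and $C=L-tK^T$ agrees with the paper's, as does the observation that $\det(B)=\pm t^{\alpha_{p-1}}$ is a unit in $\mathbb{Z}[t^{\pm1}]$ localized at $t$, so invertibility is not an issue. What your approach buys is a shorter and more transparent argument that sidesteps the Hessenberg cofactor gymnastics; what the paper's approach buys is the explicit closed form for the entries of $\mathbf{A}$ itself, which is of some independent interest but is not needed for \autoref{lem:char}.
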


Therefore by \autoref{lem:char} we have
\[
\Delta(t) = \prod_{\ell = 1}^{q-1} \chi(e^\frac{2\pi \ell i}{q})
= \prod_{\ell = 1}^{q-1} \mathcal{C}_{p-1}(e^\frac{2\pi\ell i}{q},t),
\] 
and we are done.
\end{proof}

We conclude this subsection with an example of how the recursive formula from \autoref{Ther:Alex} may be applied in practice to compute the Alexander polynomial of a spiral knot.

\begin{example}
We compute the Alexander polynomial of $6_2=S(5,2,(1,1,1,-1))$. First, for each $\varepsilon_k$, we note the value of $\mu(k)$.
\begin{center}
    \begin{tabular}{c|c|c|c|c}
    $k$ & $1$ & $2$ & $3$ & $4$ \\
    \hline
    $\mu(k)$ & $1$ & $1$ & $1$ & $t$ \\
    \end{tabular}
\end{center}

Now we find $\mathcal{C}_k(x,t)$ for $k=0, \dots, 4$.
 \[\begin{alignedat}{1}
        \mathcal{C}_0(x,t) &= 1  \\
        \mathcal{C}_1(x,t) &= \frac{\mu(1)^2}{t}+x = \frac{1}{t}+x \\
        \mathcal{C}_2(x,t) &=  \left(\frac{\mu(2)^2}{t}+x\right)\mathcal{C}_{1} - \left(\frac{\mu(1)\mu(2)x}{t}\right)\mathcal{C}_{0} = \left(\frac{1}{t}+x\right)^2 - \frac{x}{t} 
        \\
        \mathcal{C}_3(x,t) &=  \left(\frac{\mu(3)^2}{t}+x\right)\mathcal{C}_{2} - \left(\frac{\mu(2)\mu(3)x}{t}\right)\mathcal{C}_{1} =  \left(\frac{1}{t}+x\right) \left( \left(\frac{1}{t}+x\right)^2 - \frac{x}{t} \right) - \frac{x}{t} \left(\frac{1}{t}+x\right) \\
        &=  \left(\frac{1}{t}+x\right)^3 - \frac{2x}{t} \left(\frac{1}{t}+x\right) \\
        \mathcal{C}_4(x,t) &=  \left(\frac{\mu(4)^2}{t}+x\right)\mathcal{C}_{3} - \left(\frac{\mu(3)\mu(4)x}{t}\right)\mathcal{C}_{2} \\
        &= \left(t+x\right) \left( \left(\frac{1}{t}+x\right)^3 - \frac{2x}{t} \left(\frac{1}{t}+x\right) \right) - x \left(  \left(\frac{1}{t}+x\right)^2 - \frac{x}{t} \right) 
    \end{alignedat}\]

Then $\Delta_{6_2}(t) =  
\mathcal{C}_{4}\left(e^{\frac{2\pi}{2}i},t\right) = \mathcal{C}_{4}\left(-1,t\right)$ so we compute:
\begin{align*}
\mathcal{C}_{4}\left(-1,t\right) &= \left(t-1\right) \left( \left(t^{-1}-1\right)^3 + 2t^{-1} \left(t^{-1}-1\right) \right) + \left(  \left(t^{-1}-1\right)^2 + t^{-1} \right) \\
&= \left(t-1\right) \left( t^{-3} -3t^{-2} +3t^{-1} -1 + 2t^{-2} -2t^{-1} \right) + \left(  t^{-2}-2t^{-1}+1 + t^{-1} \right) \\
&= \left(t-1\right) \left( t^{-3} -t^{-2} + t^{-1} -1 \right) + \left(  t^{-2}-t^{-1}+1 \right) \\
&= -t + 3 -3t^{-1} + 3t^{-2} - t^{-3} \\ &\equiv t^4 -3t^3 +3t^2 -3t+1.
\end{align*}

\end{example}

\subsection{Properties of the Alexander polynomial} \label{subsec:properties}
To find general properties of $\Delta (t)$, we first look into the general form of the $\mathcal{C}_k(x,t)$ defined recursively in \autoref{Ther:Alex}. We define the following notation to aid in this endeavor.

\begin{itemize}
    \item  Let $\alpha_k$ be the number of $1$'s and $\beta_k$ the number of $-1$'s in the first $k$ terms of $\varepsilon$. Then $\alpha_{p-1}$ and $\beta_{p-1}$ are the numbers of $1$'s and $-1$'s, respectively, in $\varepsilon$. We define $\alpha_0 = \beta_0 = 0$. Note that $\alpha_k + \beta_k = k$.
    \item Call a string of consecutive positive $1$'s in $\varepsilon$ a \textit{positive block} and a row of consecutive $-1$'s a \textit{negative block}. Let $A_k$ and $B_k$ be the number of positive and negative blocks in the first $k$ terms of $\varepsilon$, respectively.
    \item Let $\gamma_k$ be the number of times that $\varepsilon$ changes sign in its first $k$ entries. Note that $\gamma_k = A_k+B_k-1$.
\end{itemize}

For example, if $\varepsilon = (1,1,-1,-1,-1,1,1,\ldots)$ then $\alpha_7=4$, $\beta_7=3$, $A_7=2$, $B_7=1$, and $\gamma_7=2$. We can then describe the general form of $\mathcal{C}_k(x,t)$ as follows.

\begin{restatable}{lemma}{cform}
\label{lem:cform}%
$\mathcal{C}_k(x,t)$ has the form
    \[\mathcal{C}_k(x,t) = \frac{1}{t^{\alpha_k}}\left(\mathcal{C}_k^0(x) + \mathcal{C}_k^1(x)t + \dots + \mathcal{C}_k^k(x)t^k\right) = \sum_{i = 0}^k\mathcal{C}_k^i(x)t^{i-\alpha_k},\]
    where $\mathcal{C}_k^0(x) = x^{\beta_k}$, $\mathcal{C}_k^1(x) = (A_kx-\gamma_k+\frac{B_k}{x})x^{\beta_k}$, and $\mathcal{C}_k^k(x) = x^{\alpha_k}$. 
\end{restatable}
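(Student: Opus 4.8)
The plan is to prove the three assertions about $\mathcal{C}_k$ simultaneously by strong induction on $k$. The cases $k=0$ and $k=1$ are handled directly by unwinding $\mathcal{C}_0=1$ and $\mathcal{C}_1=\mu(1)^2/t+x$ and checking the counters $\alpha,\beta,A,B,\gamma$ in both possibilities for $\varepsilon_1$; since $\mathcal{C}_k$ is expressed through $\mathcal{C}_{k-1},\mathcal{C}_{k-2}$ only for $k\ge 2$, these two base cases suffice, and the lemma statement turns out to be closed under the recursion, so no strengthening of the hypothesis is needed. For the inductive step I fix $k\ge 2$ and split into four cases according to the pair $(\varepsilon_{k-1},\varepsilon_k)$. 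Each choice pins down $\mu(k-1)$ and $\mu(k)$ and hence the exact shape of the recursion: the leading factor $\mu(k)^2/t+x$ is $1/t+x$ when $\varepsilon_k=1$ and $t+x$ when $\varepsilon_k=-1$, and the coefficient $\mu(k-1)\mu(k)x/t$ of $\mathcal{C}_{k-2}$ is one of $x/t$, $x$, or $tx$. It also fixes the transitions of the counters: $\varepsilon_k=1$ increments $\alpha_k$ and fixes $\beta_k$ while $\varepsilon_k=-1$ does the reverse, and a sign change between positions $k-1$ and $k$ increments $\gamma_k$ together with exactly one of $A_k,B_k$.

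Next I would substitute the inductive Laurent expansions $\mathcal{C}_{k-1}=\sum_{i=0}^{k-1}\mathcal{C}_{k-1}^i(x)\,t^{i-\alpha_{k-1}}$ and $\mathcal{C}_{k-2}=\sum_{i=0}^{k-2}\mathcal{C}_{k-2}^i(x)\,t^{i-\alpha_{k-2}}$ into the recursion and collect powers of $t$, keeping careful track of the shift $\alpha_{k-1}-\alpha_{k-2}\in\{0,1\}$ so that the exponent ranges of the two summands align. A short degree count in each of the four cases shows that the $\mathcal{C}_{k-2}$ term contributes only in $t$-degrees between $-\alpha_k+1$ and $k-\alpha_k-1$, while $(\mu(k)^2/t+x)\mathcal{C}_{k-1}$ contributes in degrees $-\alpha_k$ through $k-\alpha_k$; this yields the asserted Laurent shape $\mathcal{C}_k=\sum_{i=0}^{k}\mathcal{C}_k^i(x)t^{i-\alpha_k}$ and shows that the extreme coefficients come from $(\mu(k)^2/t+x)\mathcal{C}_{k-1}$ alone. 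Concretely, the coefficient of $t^{-\alpha_k}$ is $\mathcal{C}_{k-1}^0=x^{\beta_{k-1}}$ carried down by $1/t$ (if $\varepsilon_k=1$) or $x\cdot\mathcal{C}_{k-1}^0=x^{\beta_{k-1}+1}$ (if $\varepsilon_k=-1$), and the transition rule for $\beta$ makes this $x^{\beta_k}$ in both cases; symmetrically, the coefficient of $t^{k-\alpha_k}$ comes from the leading term $\mathcal{C}_{k-1}^{k-1}=x^{\alpha_{k-1}}$ and works out to $x^{\alpha_k}$.

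The coefficient $\mathcal{C}_k^1$ of $t^{1-\alpha_k}$ is the only genuinely delicate point, since here the bottom term of the $\mathcal{C}_{k-2}$ part does enter. In each case this coefficient is a combination of $\mathcal{C}_{k-1}^1=(A_{k-1}x-\gamma_{k-1}+B_{k-1}/x)x^{\beta_{k-1}}$, of $x^{\pm1}\mathcal{C}_{k-1}^0=x^{\beta_{k-1}\pm1}$, and of $x^{\pm1}\mathcal{C}_{k-2}^0=x^{\beta_{k-2}\pm1}$; I would expand and simplify. The part coming from $\mathcal{C}_{k-1}^1$ already carries the $(A_{k-1},\gamma_{k-1},B_{k-1})$ data, and in every case the leftover monomials collapse to exactly the correction that passes from $(A_{k-1},B_{k-1},\gamma_{k-1},\beta_{k-1})$ to $(A_k,B_k,\gamma_k,\beta_k)$: for instance when $\varepsilon_{k-1}=-1$, $\varepsilon_k=1$ the surplus is $x^{\beta_{k-1}}(x-1)$, which accounts for $A_{k-1}\mapsto A_{k-1}+1$ and $\gamma_{k-1}\mapsto\gamma_{k-1}+1$ as a new positive block opens, whereas when $\varepsilon_{k-1}=\varepsilon_k=-1$ the surplus monomials cancel, reflecting that no new block appears. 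Along the way it is worth recording that $\mathcal{C}_k^1$ is an honest polynomial in $x$, since $B_k=0$ whenever $\beta_k=0$, so the formal $B_k/x$ never produces a negative power of $x$.

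I expect the main obstacle to be organizational rather than conceptual: one has to keep straight the four sign cases, the two further sub-cases according to whether $\alpha_{k-1}=\alpha_{k-2}$ or $\alpha_{k-1}=\alpha_{k-2}+1$, and all of the exponent shifts, and then check in each branch that the surplus monomials in the computation of $\mathcal{C}_k^1$ cancel or telescope precisely into the prescribed change of $(A_k,B_k,\gamma_k,\beta_k)$.
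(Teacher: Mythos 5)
Your proposal matches the paper's proof essentially step for step: strong induction on $k$, a four-way case split on $(\varepsilon_{k-1},\varepsilon_k)$ with the attendant transition rules for $\alpha_k,\beta_k,A_k,B_k,\gamma_k$, substitution of the inductive Laurent expansions into the recursion, and separate identification of the lowest, highest, and second coefficients, with the $\mathcal{C}_{k-2}$ term entering only the interior degrees. The only cosmetic difference is that the paper also verifies $k=2$ explicitly as a base case, whereas you argue (correctly, after checking that the two formulas for $\mathcal{C}_1^1$ agree) that $k=0,1$ suffice.
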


Together, \autoref{Ther:Alex} and \autoref{lem:cform} give the following form for $\Delta(t)$:
\[
    \Delta(t) = \prod\limits_{\ell = 1}^{q-1} 
\mathcal{C}_{p-1}\left(e^\frac{2\pi \ell i}{q},t\right) = \prod\limits_{\ell = 1}^{q-1}\sum_{j = 0}^{p-1}\mathcal{C}_{p-1}^j(e^\frac{2\pi \ell i}{q})t^{j-\alpha_{p-1}}.
\]

We defer the proof of \autoref{lem:cform} to \autoref{sec:lemmaproofs}, but first we point out several properties that follow from this expression.

\begin{corollary} \label{cor:deg}
    The Alexander polynomial of the spiral knot $S(p,q,\varepsilon)$ has degree $(p-1)(q-1)$.
\end{corollary}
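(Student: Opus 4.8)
The plan is to extract the degree of $\Delta(t) = \prod_{\ell=1}^{q-1} \mathcal{C}_{p-1}(e^{2\pi\ell i/q}, t)$ from the explicit form of $\mathcal{C}_{p-1}(x,t)$ given in \autoref{lem:cform}, being careful that evaluating $x$ at roots of unity does not cause unexpected cancellation of the top or bottom coefficients. From \autoref{lem:cform}, $\mathcal{C}_{p-1}(x,t) = \sum_{i=0}^{p-1} \mathcal{C}_{p-1}^i(x) t^{i-\alpha_{p-1}}$, so the span between the highest and lowest powers of $t$ appearing is at most $p-1$; I would first check that this span is exactly $p-1$ after substituting $x = \zeta := e^{2\pi\ell i/q}$. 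The top coefficient is $\mathcal{C}_{p-1}^{p-1}(\zeta) = \zeta^{\alpha_{p-1}}$ and the bottom coefficient is $\mathcal{C}_{p-1}^0(\zeta) = \zeta^{\beta_{p-1}}$; since $\zeta$ is a nonzero complex number, neither vanishes, so each factor $\mathcal{C}_{p-1}(\zeta, t)$ is a Laurent polynomial in $t$ whose top-minus-bottom degree span is exactly $(p-1)$.

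Next I would multiply over $\ell = 1, \dots, q-1$. The degree span of a product of Laurent polynomials is the sum of the spans, so the span of $\Delta(t)$ is $(p-1)(q-1)$, provided the leading and trailing coefficients of the product do not vanish — but these are $\prod_{\ell} \zeta_\ell^{\alpha_{p-1}}$ and $\prod_\ell \zeta_\ell^{\beta_{p-1}}$ respectively (where $\zeta_\ell = e^{2\pi\ell i/q}$), each a product of nonzero numbers, hence nonzero. So $\Delta(t)$, as a Laurent polynomial, has top power minus bottom power equal to $(p-1)(q-1)$. Since the Alexander polynomial is only defined up to units $\pm t^k$, its "degree" in the sense of the breadth (or, after normalizing to an honest polynomial with nonzero constant term, its degree) is exactly $(p-1)(q-1)$. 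I should also note $\Delta(t)$ genuinely has rational-number (indeed integer) coefficients: although each factor individually lives over $\mathbb{Q}(\zeta_q)$, the product is symmetric under the Galois action permuting the primitive and non-primitive $q$-th roots of unity that appear, so it descends to $\mathbb{Z}[t,t^{-1}]$ — this is consistent with the top/bottom coefficients being the integers $\pm 1$ after the Galois-invariant products collapse.

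The main obstacle I anticipate is purely bookkeeping: making sure that the evaluation $x \mapsto e^{2\pi\ell i/q}$ cannot kill the extreme coefficients. \autoref{lem:cform} hands us $\mathcal{C}_{p-1}^0(x) = x^{\beta_{p-1}}$ and $\mathcal{C}_{p-1}^{p-1}(x) = x^{\alpha_{p-1}}$ as monomials, which is exactly what makes this clean — a monomial in $x$ never vanishes at a root of unity — so really the corollary is almost immediate from the lemma once one is careful about the "up to $\pm t^k$" ambiguity. The only subtlety worth spelling out is that "degree of the Alexander polynomial" must be interpreted as the breadth $\max\deg - \min\deg$ of the Laurent polynomial (equivalently, twice the extremal-coefficient bound, which relates to the genus bound in \autoref{lem:upgenus}); with that convention the argument above is complete.
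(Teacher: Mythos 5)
Your proof is correct and follows essentially the same route as the paper's: both arguments use \autoref{lem:cform} to observe that the extreme coefficients $\mathcal{C}_{p-1}^0(e^{2\pi\ell i/q})=(e^{2\pi\ell i/q})^{\beta_{p-1}}$ and $\mathcal{C}_{p-1}^{p-1}(e^{2\pi\ell i/q})=(e^{2\pi\ell i/q})^{\alpha_{p-1}}$ are nonzero, so the span of each factor is exactly $p-1$ and the spans add over the $q-1$ factors. Your extra remarks on Galois invariance and the ``up to $\pm t^k$'' normalization are fine but not needed; the paper simply subtracts the lowest power $(q-1)(-\alpha_{p-1})$ from the highest power $(q-1)(p-1-\alpha_{p-1})$.
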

\begin{proof}
We know from \autoref{lem:cform} that $\mathcal{C}_{p-1}^{0}(e^\frac{2\pi \ell i}{q}) = (e^\frac{2\pi \ell i}{q})^{\beta_{p-1}} \neq 0$ and $\mathcal{C}_{p-1}^{p-1}(e^\frac{2\pi \ell i}{q}) = (e^\frac{2\pi \ell i}{q})^{\alpha_{p-1}} \neq 0$ for all $\ell$. Therefore, the lowest power of $t$ in $\Delta(t)$ comes from multiplying the lowest power of $t$ in each summand within the product to get $t^{(q-1)(-\alpha_{p-1})}$. Likewise, the highest power of $t$ in $\Delta(t)$ is $t^{(q-1)(p-1-\alpha_{p-1})}$.
The degree of the Alexander polynomial of $S(p,q,\varepsilon)$ is the degree of the highest power of $t$ minus the degree of the lowest power of $t$:
\[
    (q-1)(p-1-\alpha_{p-1}) - (q-1)(-\alpha_{p-1}) = (q-1)(p-1-\alpha_{p-1}+\alpha_{p-1}) = (q-1)(p-1).
\]
\end{proof}

In \autoref{subsec:invariants} we will use \autoref{cor:deg} to determine the genus of any spiral knot (see \autoref{Ther:genus}). The following two corollaries, which concern the first and second coefficients of the Alexander polynomial, will be useful for classifying low-complexity spiral knots in \autoref{subsec:examples}. We will additionally use \autoref{cor:coeff} to conclude that $S(p,q,\varepsilon) = S(q,p,\varepsilon')$ for some $\varepsilon'$ if and only if $S(p,q,\varepsilon)$ is a torus knot (see \autoref{Ther:swappq}). 

As the Alexander polynomial is symmetric, the coefficients of the highest and second highest powers of $t$ equal the coefficients of the lowest and second lowest powers, respectively, and in the proofs that follow we calculate the coefficients of the lower powers for the sake of ease of notation.
We denote the coefficients of the lowest and second lowest powers of $t$ by $\Delta^0$ and $\Delta^1$, respectively.

\begin{corollary} \label{cor:monic}
    All spiral knots have monic Alexander polynomial.
\end{corollary}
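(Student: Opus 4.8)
The plan is to read off the leading (equivalently, by symmetry, the lowest) coefficient of $\Delta(t)$ from the product formula in \autoref{Ther:Alex} together with the explicit form of $\mathcal{C}_{p-1}$ supplied by \autoref{lem:cform}, and check it equals $\pm 1$. Concretely, $\Delta(t) = \prod_{\ell=1}^{q-1} \sum_{j=0}^{p-1} \mathcal{C}_{p-1}^j(\omega^\ell)\, t^{j-\alpha_{p-1}}$ where $\omega = e^{2\pi i/q}$, so the lowest-degree term is obtained by multiplying together, over $\ell = 1, \dots, q-1$, the coefficient $\mathcal{C}_{p-1}^0(\omega^\ell)$ of the minimal power $t^{-\alpha_{p-1}}$ in each factor. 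By \autoref{lem:cform} we have $\mathcal{C}_{p-1}^0(x) = x^{\beta_{p-1}}$, so this leading coefficient is $\prod_{\ell=1}^{q-1} (\omega^\ell)^{\beta_{p-1}} = \left(\prod_{\ell=1}^{q-1}\omega^\ell\right)^{\beta_{p-1}}$.

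**Next I would** evaluate $\prod_{\ell=1}^{q-1}\omega^\ell = \omega^{1+2+\cdots+(q-1)} = \omega^{q(q-1)/2} = e^{\pi i (q-1)}= (-1)^{q-1}$. Hence the leading coefficient of $\Delta(t)$ is $(-1)^{(q-1)\beta_{p-1}} = \pm 1$, which is exactly the assertion that $\Delta_{S(p,q,\varepsilon)}$ is monic. (One should note that, as remarked in the paper just before this corollary, the Alexander polynomial is only defined up to multiplication by $\pm t^\ell$ anyway, so a sign of $-1$ is harmless; and in fact since $\gcd(p,q)=1$ at least one of $p,q$ is odd, so if $q$ is odd the sign is already $+1$ outright.) The same computation applied to $\mathcal{C}_{p-1}^{p-1}(x) = x^{\alpha_{p-1}}$ gives the highest coefficient as $(-1)^{(q-1)\alpha_{p-1}}$, consistent with symmetry of $\Delta$.

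**I anticipate no serious obstacle here:** the only inputs are the product formula and the already-deferred \autoref{lem:cform}, both of which we are entitled to assume, plus the elementary Gauss-sum computation of $\prod_\ell \omega^\ell$. The one point that deserves a sentence of care is making sure the minimal power of $t$ in each factor $\mathcal{C}_{p-1}(\omega^\ell, t)$ really is $t^{-\alpha_{p-1}}$ with a nonzero coefficient — that is, that $\mathcal{C}_{p-1}^0(\omega^\ell) = (\omega^\ell)^{\beta_{p-1}} \neq 0$ — which is immediate since $\omega^\ell$ is a root of unity, hence nonzero; this is the same observation already used in the proof of \autoref{cor:deg}. So the proof is essentially two lines of bookkeeping on top of \autoref{lem:cform}.
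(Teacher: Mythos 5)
Your proposal is correct and follows essentially the same route as the paper: both identify the lowest coefficient of $\Delta(t)$ as $\prod_{\ell=1}^{q-1}\mathcal{C}_{p-1}^0(e^{2\pi\ell i/q})=\prod_{\ell=1}^{q-1}(e^{2\pi\ell i/q})^{\beta_{p-1}}$ via \autoref{lem:cform} and evaluate this product of roots of unity to $e^{\pi i(q-1)\beta_{p-1}}=\pm 1$. Your additional remarks on the sign and on the nonvanishing of $\mathcal{C}_{p-1}^0(\omega^\ell)$ are consistent with the paper's treatment and add nothing problematic.
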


\begin{proof}
Recall that the Alexander polynomial is symmetric.
Collecting the coefficients of $t^{(q-1)(-\alpha_{p-1})}$ and then applying \autoref{lem:cform}, we get:
\[\Delta^0 = \prod_{\ell =1}^{q-1}\mathcal{C}_{p-1}^0(e^\frac{2 \pi \ell  i}{q}) = \prod_{\ell =1}^{q-1}e^\frac{2 \pi \ell  \beta_{p-1} i}{q} = e^\frac{2 \pi \left(\sum_{\ell =1}^{q-1}\ell \right) \beta_{p-1} i}{q } = e^\frac{2 \pi \frac{(q-1)(q)}{2} \beta_{p-1} i}{q} = e^{ \pi (q-1) \beta_{p-1} i} = \pm 1. \]
\end{proof}

\begin{corollary} \label{cor:coeff}
    The second coefficient of the Alexander polynomial of the spiral knot $S(p,q,\varepsilon)$ is $\pm (q\gamma_{p-1} + 1)$.
\end{corollary}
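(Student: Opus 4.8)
The plan is to extract the coefficient $\Delta^1$ of the second-lowest power of $t$ from the product expression
\[
\Delta(t) = \prod_{\ell=1}^{q-1}\sum_{j=0}^{p-1}\mathcal{C}_{p-1}^j\!\left(e^{2\pi\ell i/q}\right)t^{j-\alpha_{p-1}}
\]
furnished by \autoref{Ther:Alex} and \autoref{lem:cform}, exactly as in the proof of \autoref{cor:monic}. The lowest-order term in each factor is $\mathcal{C}_{p-1}^0(e^{2\pi\ell i/q})t^{-\alpha_{p-1}}$, and the next-lowest is $\mathcal{C}_{p-1}^1(e^{2\pi\ell i/q})t^{1-\alpha_{p-1}}$; a term of total degree one above the minimum is obtained by choosing the $\mathcal{C}^1$ term from exactly one factor (say the $\ell_0$-th) and the $\mathcal{C}^0$ term from all the others. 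Hence
\[
\Delta^1 = \sum_{\ell_0=1}^{q-1}\frac{\mathcal{C}_{p-1}^1(e^{2\pi\ell_0 i/q})}{\mathcal{C}_{p-1}^0(e^{2\pi\ell_0 i/q})}\cdot\prod_{\ell=1}^{q-1}\mathcal{C}_{p-1}^0(e^{2\pi\ell i/q})
= \Delta^0\sum_{\ell_0=1}^{q-1}\frac{\mathcal{C}_{p-1}^1(e^{2\pi\ell_0 i/q})}{\mathcal{C}_{p-1}^0(e^{2\pi\ell_0 i/q})}.
\]

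Next I would substitute the closed forms from \autoref{lem:cform}: since $\mathcal{C}_{p-1}^0(x)=x^{\beta_{p-1}}$ and $\mathcal{C}_{p-1}^1(x)=\left(A_{p-1}x-\gamma_{p-1}+\frac{B_{p-1}}{x}\right)x^{\beta_{p-1}}$, the ratio is simply $A_{p-1}x-\gamma_{p-1}+B_{p-1}x^{-1}$ evaluated at $x=\zeta^{\ell_0}$, where $\zeta=e^{2\pi i/q}$. Summing over $\ell_0=1,\dots,q-1$, the standard root-of-unity identities $\sum_{\ell=1}^{q-1}\zeta^{\ell}=-1$ and $\sum_{\ell=1}^{q-1}\zeta^{-\ell}=-1$ give
\[
\sum_{\ell_0=1}^{q-1}\left(A_{p-1}\zeta^{\ell_0}-\gamma_{p-1}+B_{p-1}\zeta^{-\ell_0}\right)
= -A_{p-1}-(q-1)\gamma_{p-1}-B_{p-1}.
\]
Using $\gamma_{p-1}=A_{p-1}+B_{p-1}-1$ (from the definitions preceding \autoref{lem:cform}), this collapses to $-(q-1)\gamma_{p-1}-(A_{p-1}+B_{p-1}) = -(q-1)\gamma_{p-1}-(\gamma_{p-1}+1) = -q\gamma_{p-1}-1$. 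Therefore $\Delta^1 = \Delta^0\cdot(-q\gamma_{p-1}-1)=\pm(q\gamma_{p-1}+1)$, since $\Delta^0=\pm1$ by the computation in \autoref{cor:monic}. By symmetry of the Alexander polynomial, this is also the second-highest coefficient, which is what the statement asserts.

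I do not anticipate a serious obstacle here; the argument is a bookkeeping exercise once \autoref{lem:cform} is in hand. The one point requiring a little care is making sure that no \emph{other} combination of terms from the $q-1$ factors contributes to the coefficient of $t^{(q-1)(-\alpha_{p-1})+1}$ — but since every factor's lowest exponent is $-\alpha_{p-1}$ and the exponents within a factor increase in integer steps, the only way to land one above the global minimum is the "one $\mathcal{C}^1$, rest $\mathcal{C}^0$" pattern described above, so this is immediate. A secondary bit of care is tracking the sign ambiguity through $\Delta^0=\pm1$, but since the Alexander polynomial is only defined up to $\pm t^\ell$ anyway, reporting the answer as $\pm(q\gamma_{p-1}+1)$ is exactly right.
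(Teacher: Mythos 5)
Your proposal is correct and follows essentially the same route as the paper's proof: extract the coefficient of $t^{1-(q-1)\alpha_{p-1}}$ as a sum over which factor contributes its $\mathcal{C}_{p-1}^1$ term, factor out $\Delta^0$, substitute the closed forms from \autoref{lem:cform}, and apply the root-of-unity sums together with $\gamma_{p-1}=A_{p-1}+B_{p-1}-1$. No substantive differences to note.
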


\begin{proof}
Recall that the Alexander polynomial is symmetric. Collecting the coefficients of $t^{-(q-1)\alpha_{p-1}+1}$ then applying \autoref{lem:cform}, we get the following string of equalities. Note that $\mathcal{C}_{p-1}^{0}(e^\frac{2\pi \ell i}{q}) = (e^\frac{2\pi \ell i}{q})^{\beta_{p-1}} \neq 0$. 
\begin{align*}
 \Delta^1 &= \sum_{j=1}^{q-1} \left( 
\mathcal{C}_{p-1}^1 (e^{\frac{2\pi j i}{q}})\prod_{\substack{\ell=1 \\ \ell \neq j}}^{q-1} \mathcal{C}_{p-1}^0 (e^{\frac{2\pi \ell i}{q}}) \right)
\\&=\sum_{j = 1}^{q-1} \left( \prod_{\ell =1}^{q-1}C_{p-1}^0(e^\frac{2 \pi \ell  i}{q}) \right) \frac{C_{p-1}^1(e^\frac{2 \pi j i}{q})}{C_{p-1}^0(e^\frac{2 \pi j i}{q})}
\\&=\sum_{j = 1}^{q-1} \Delta^0 \frac{C_{p-1}^1(e^\frac{2 \pi j i}{q})}{C_{p-1}^0(e^\frac{2 \pi j i}{q})}
\\&= \pm \sum_{j = 1}^{q-1} \frac{C_{p-1}^1(e^\frac{2 \pi j i}{q})}{e^\frac{2 \pi j \beta_{p-1} i}{q}}
\\& = \pm \sum_{j = 1}^{q-1} \frac{(A_{p-1}e^{\frac{2\pi j i}{q}} - \gamma_{p-1} + B_{p-1}e^{-\frac{2\pi j i}{q}} )(e^{\frac{2\pi j \beta_{p-1} i}{q}})}{e^\frac{2 \pi j \beta_{p-1} i}{q}}
\\&= \pm \sum_{j = 1}^{q-1}A_{p-1}e^{\frac{2\pi j i}{q}} - \gamma_{p-1} + B_{p-1}e^{-\frac{2\pi j i}{q}}  \\
&= \pm \left(A_{p-1}\sum_{j = 1}^{q-1}e^{\frac{2\pi j i}{q}} - \sum_{j = 1}^{q-1}\gamma_{p-1} + B_{p-1}\sum_{j = 1}^{q-1}e^{-\frac{2\pi j i}{q}} \right) \\
&= \pm\left(-A_{p-1} - (q-1)\gamma_{p-1} - B_{p-1} \right) \\
&= \mp\left( (q-1)\gamma_{p-1} +A_{p-1}+ B_{p-1} \right) \\
&= \mp(q\gamma_{p-1}+1).
\end{align*}
\end{proof}

As spiral knots are closures of homogeneous braids, a result of Stallings \cite{Stallings} implies spiral knots are fibered, hence have monic Alexander polynomials.
This reproves \autoref{cor:monic}.
To our knowledge, \autoref{cor:deg} and \autoref{cor:coeff} (and their consequences) are the first known obstructions to a knot being spiral which are not obstructions to a knot being periodic and fibered. 
Known periodicity obstructions include the Murasugi conditions \cites{Murasugi,DavisLivingston}, discussed in \autoref{sec:future}, and the Edmonds conditions \cite{Edmonds}, which we note in \autoref{subsec:invariants} can be used to give an alternative proof of \autoref{Ther:genus}.

Observe that, for example, the knot $8_{21}$ is a hyperbolic, periodic, fibered knot with all nontrivial periods of order 2 (see KnotInfo \cite{KnotInfo}). 
If $8_{21}$ was a spiral knot, then we would have $8_{21}=S(p,2,\varepsilon)$.
Its Alexander polynomial is $\Delta_{8_{21}}(t)=t^4-4t^3+5t^2-4t+1$.
We can employ each of \autoref{cor:deg} and \autoref{cor:coeff} to show $8_{21}$ is not spiral.

Applying \autoref{cor:deg}, we see that if $8_{21}$ was spiral, we would have $p=5$. Performing a census of all spiral knots $S(5,2,\varepsilon)$, as we do in \autoref{subsec:examples}, shows that $8_{21}$ is not spiral.
Instead of identifying spiral knots $S(5,2,\varepsilon)$, we could further apply \autoref{cor:coeff} to see that if $8_{21}$ was spiral, then $\varepsilon$ would satisfy $\gamma_{p-1}=
\tfrac{5}{2}$, which is impossible since $\gamma_{p-1}$ is an integer. 

\subsection{Proofs of technical lemmas}
\label{sec:lemmaproofs}
We will now prove \autoref{lem:char} and \autoref{lem:cform}.

\char*

\begin{proof} We first find a general form for the matrix $\mathbf{A}$. Let $B \vcentcolon = K-tL^T$ and $C \vcentcolon = L-tK^T$. We have $\mathbf{A} = B^{-1}C = (K-tL^T)^{-1}(L-tK^T)$, where $K = [k_{ij}]$ and $L = [l_{ij}]$ are as previously defined: 
\[k_{ij} = \begin{cases} -1, & j=i,\ \varepsilon_i = -1 \\
0, & \text{otherwise} \end{cases} \text{ and } l_{ij} = \begin{cases} 1, & j=i,\ \varepsilon_i = 1 \\
-1, & j = i + 1 \\
0, & \text{otherwise} \end{cases}.\] 
Recall that $\mu$ is defined as $\mu(i) = \begin{cases}
1, & \varepsilon_i = 1 \\
t, & \varepsilon_i = -1 \\
0, & \text{otherwise}
\end{cases}$. Then $B = [b_{ij}]$ and $C = [c_{ij}]$ are given by: 
\begin{align*}
b_{ij} &= k_{ij} - tl_{ji} = \begin{cases}
-\frac{t}{\mu(i)}, & j=i \\
t, & j = i-1 \\
0, & \text{otherwise}
\end{cases},
\\
c_{ij} &= l_{ij} - tk_{ji} = \begin{cases}
\mu(i), & j=i \\
-1, & j = i+1 \\
0, & \text{otherwise}
\end{cases}.
\end{align*}
Let $D = [d_{ij}]$ be defined by $d_{ij} = \begin{cases}
-\frac{1}{t}\prod\limits_{j \leq l \leq i} \mu(l), & j \leq i \\
0, & \text{otherwise}
\end{cases}.$

We show that $D = B^{-1}$ by calculating the $(i,j)$ entry of the product $BD$. 
\[\begin{alignedat}{1}
[BD]_{ij} = \sum\limits_{k=1}^{p-1} b_{ik}d_{kj} &= b_{i,i-1}d_{i-1,j} + b_{ii}d_{ij} \\
&= td_{i-1,j} - \frac{t}{\mu(i)}d_{ij} \\
&= \begin{cases}
\frac{1}{\mu(i)} \prod\limits_{i \leq l \leq i} \mu(l), & j=i \\
-\prod\limits_{j \leq l \leq i-1} \mu(l) + \frac{1}{\mu(i)} \prod\limits_{j \leq l \leq i} \mu(l), & j < i \\
0, & \text{otherwise} 
\end{cases} \\
&= \begin{cases}
1, & j=i \\
0, & \text{otherwise}
\end{cases} \\
\end{alignedat}\]

So we have $BD = I$. Since $B$ is invertible, we have $\mathbf{A} = B^{-1}C = DC$. The general form for $\mathbf{A} = [a_{ij}]$ is therefore given by the following.
\begin{align*}
a_{ij} = \sum\limits_{k=1}^{p-1} d_{ik}c_{kj} &= d_{ij}c_{jj} + d_{i,j-1}c_{j-1,j} \\
&= \mu(j)d_{ij} - d_{i,j-1} \\
&= \begin{cases}
\frac{1}{t}\prod\limits_{i \leq l \leq i} \mu(l), & j = i+1\\
-\frac{1}{t}\left(\mu(j)\prod\limits_{j \leq l \leq i} \mu(l) - \prod\limits_{j-1 \leq l \leq i} \mu(l)\right), & 1 \leq j \leq i \\
0, & \text{otherwise} \\
\end{cases} \\
&= \begin{cases}\frac{1}{t}\mu(i), & j = i+1\\
\frac{1}{t}\Big(\mu(j-1) - \mu(j) \Big)\prod\limits_{j \leq l \leq i} \mu(l),  & 1 \leq j \leq i \\
0, & \text{otherwise} \\
\end{cases} \\
\end{align*}
When $j < i$, note that 
\[a_{ij} = \frac{\mu(j-1)-\mu(j)}{t}\prod\limits_{j \leq l \leq i} \mu(l) 
= \mu(i)\left(\frac{\mu(j-1)-\mu(j)}{t}\prod\limits_{j \leq l \leq i-1} \mu(l)\right) = \mu(i)a_{i-1,j}.\] Additionally, when $j=i$, we have \[a_{jj}=\frac{\mu(j-1)-\mu(j)}{t}\mu(j).\]

We now want to find the characteristic polynomial of $\mathbf{A}$. Define $P_k$ to be the submatrix given by the first $k$ rows and columns of $xI-\mathbf{A}$. We take $P_0$ to be the empty matrix.
Given a square matrix $A$, define $\widehat{A}$ to be the submatrix obtained by deleting the second-to-last row and last column from $A$. If $A$ is a $1\times 1$ matrix, $\widehat{A}$ is the empty matrix. For the sake of convenience, we take the determinant of the empty matrix to be 1. The matrices $\widehat{P}_k$ will appear in our computation of the determinant of $xI-\mathbf{A}$ via cofactor expansion.

We can utilize the general form for $\mathbf{A}$ to write down the following matrices. We have 

\[
\renewcommand{\arraystretch}{1.3}
P_k = \left[\begin{array}{c|c}
  \begin{matrix}
  & & & & \\
  & &P_{k-1}& & \\
  & & & &\\
  \end{matrix}
  & 
  \begin{matrix}
  0 \\
  \vdots \\
  0 \\
  -a_{k-1,k}
  \end{matrix}
  \\
\hline
  \begin{matrix}
-\mu(k)a_{k-1,1} & \dots & -\mu(k)a_{k-1,k-2} &-\mu(k)a_{k-1,k-1}
  \end{matrix} &
   x-a_{kk}
\end{array}\right]
\]
so that
\[\renewcommand{\arraystretch}{1.3}
\widehat{P}_k = \left[\begin{array}{c|c}
  \begin{matrix}
  & & & & \\
  & &P_{k-2}& & \\
  & & & &\\
  \end{matrix}
  & 
  \begin{matrix}
  0 \\
  \vdots \\
  0 \\
  -a_{k-2,k-1}
  \end{matrix}
  \\
\hline
  \begin{matrix}
-\mu(k)a_{k-1,1} & \dots & -\mu(k)a_{k-1,k-2}
  \end{matrix} &
  -\mu(k)a_{k-1,k-1}
\end{array}\right] 
\]
and
\[\renewcommand{\arraystretch}{1.3}
P_{k-1} = \left[\begin{array}{c|c}
  \begin{matrix}
  & & & & \\
  & &P_{k-2}& & \\
  & & & &\\
  \end{matrix}
  & 
  \begin{matrix}
  0 \\
  \vdots \\
  0 \\
  -a_{k-2,k-1}
  \end{matrix}
  \\
\hline
  \begin{matrix}
-a_{k-1,1} & \dots & -a_{k-1,k-2}
  \end{matrix} &
  x-a_{k-1,k-1}
\end{array}\right].
\]
We would like to compare the matrices $\widehat{\widehat{P\, }}_k$ and $\widehat{P}_{k-1}$. These are given by
\[\renewcommand{\arraystretch}{1.3}
\widehat{\widehat{P\, }}_k = \left[\begin{array}{c|c}
  \begin{matrix}
  & & & & \\
  & &P_{k-3}& & \\
  & & & &\\
  \end{matrix}
  & 
  \begin{matrix}
  0 \\
  \vdots \\
  0 \\
  -a_{k-3,k-2}
  \end{matrix}
  \\
\hline
  \begin{matrix}
-\mu(k)a_{k-1,1} & \dots & -\mu(k)a_{k-1,k-3}
  \end{matrix} &
  -\mu(k)a_{k-1,k-2}
\end{array}\right] 
\]
and
\[\renewcommand{\arraystretch}{1.3}
\widehat{P}_{k-1} = \left[\begin{array}{c|c}
  \begin{matrix}
  & & & & \\
  & &P_{k-3}& & \\
  & & & &\\
  \end{matrix}
  & 
  \begin{matrix}
  0 \\
  \vdots \\
  0 \\
  -a_{k-3,k-2}
  \end{matrix}
  \\
\hline
  \begin{matrix}
-a_{k-1,1} & \dots & -a_{k-1,k-3}
  \end{matrix} &
  -a_{k-1,k-2}
\end{array}\right]. 
\]

We can see that $\widehat{\widehat{P\, }}_k$ is given by multiplying the bottom row of $\widehat{P}_{k-1}$ by $\mu(k)$. It follows that, for $k\geq 3$, $\det(\widehat{\widehat{P\, }}_k) = \mu(k)\det(\widehat{P}_{k-1})$. We cofactor expand along the last column of $\widehat{P}_k$ to get
\[\begin{alignedat}{1}
\det(\widehat{P}_k) &= -\mu(k)a_{k-1,k-1}\det(P_{k-2}) + a_{k-2,k-1} \det(\widehat{\widehat{P\, }}_k) \\
&= -\mu(k)a_{k-1,k-1}\det(P_{k-2}) + a_{k-2,k-1} \mu(k) \det(\widehat{P}_{k-1}).
\end{alignedat}\]
Then, we cofactor expand along the last column of $P_{k-1}$ and rearrange the equation to get
\[\begin{alignedat}{2}
&& \quad \det(P_{k-1}) &= (x-a_{k-1,k-1})\det(P_{k-2}) + a_{k-2,k-1} \det(\widehat{P}_{k-1}) \\
\Leftrightarrow && \quad a_{k-2,k-1} \det(\widehat{P}_{k-1}) &= \det(P_{k-1}) -  (x-a_{k-1,k-1})\det(P_{k-2}). & 
\end{alignedat}\]
Plugging $a_{k-2,k-1} \det(\widehat{P}_{k-1})$ into $\det(\widehat{P}_k)$ gives
\[\begin{alignedat}{1}
\det(\widehat{P}_k) &= -\mu(k)a_{k-1,k-1}\det(P_{k-2}) + \mu(k)\big(\det(P_{k-1}) - (x-a_{k-1,k-1})\det(P_{k-2})\big) \\
&= \mu(k)\big(\det(P_{k-1}) - (x-a_{k-1,k-1})\det(P_{k-2}) - a_{k-1,k-1}\det(P_{k-2})\big) \\
&= \mu(k)\big(\det(P_{k-1}) - x\det(P_{k-2}) \big).
\end{alignedat}\]
By cofactor expanding along the last column of $P_k$, we can see that
\[\begin{alignedat}{1}
\det(P_k)={}& (x-a_{kk})\det(P_{k-1}) + a_{k-1,k}\det(\widehat{P}_k) \\
={}& (x-a_{kk})\det(P_{k-1}) + a_{k-1,k}\mu(k)\big(\det(P_{k-1})-x\det(P_{k-2})\big) \\
={}& (x-a_{kk}+ a_{k-1,k}\mu(k))\det(P_{k-1}) - a_{k-1,k}\mu(k)x\det(P_{k-2}) \\
={}& \left(x-\frac{\mu(k-1)-\mu(k)}{t}\mu(k)+ \frac{\mu(k-1)}{t}\mu(k)\right)\det(P_{k-1}) - \frac{\mu(k-1)}{t}\mu(k)x\det(P_{k-2}) \\
={}& \left(\frac{\mu(k)^2}{t}+x\right)\det(P_{k-1}) - \frac{\mu(k-1)\mu(k)x}{t}\det(P_{k-2}). \\
\end{alignedat}\]

Thus, $\det(P_k)$ satisfies the same recurrence relation as $\mathcal{C}_k(x,t)$ for $k\geq 3$. We check that $\det(P_k)=\mathcal{C}_k(x,t)$ for $0\leq k\leq 2$. By convention, $\det(P_{0}) = 1 = \mathcal{C}_0(x,t)$. When $k=1$, \[\det(P_1)=x-a_{11}=x+\frac{\mu(1)^2}{t}=\mathcal{C}_1(x,t).\] When $k=2$,
\begin{align*}
\det(P_2)&=(x-a_{11})(x-a_{22})-a_{12}a_{21} \\
&=x^2+\left(\frac{\mu(1)^2}{t}+\frac{\mu(2)^2}{t}-\frac{\mu(1)\mu(2)}{t}\right)x+\frac{\mu(1)^2\mu(2)^2}{t^2} \\
&=\mathcal{C}_2(x,t).
\end{align*}
Thus, $\det(P_k)$ and $\mathcal{C}_k(x,t)$ also have the same initial conditions. Therefore, \[\chi(x) = \det(P_{p-1}) = \mathcal{C}_{p-1}(x,t).\] 
\end{proof}

\cform*

\begin{proof}
Refer to \autoref{subsec:properties} for the definitions of $\alpha_k$, $\beta_k$, $A_k$, $B_k$, and $\gamma_k$. We use a strong induction argument to show that $\mathcal{C}_k$ has the above form. We check the base cases directly.
    \[\begin{alignedat}{1}
        \mathcal{C}_0(x,t) &= 1 = \frac{1}{t^0}(1) \\
        \mathcal{C}_1(x,t) &= \frac{\mu(1)^2}{t}+x =\begin{cases}
             \frac{1}{t}+x = \frac{1}{t^1}(x^0t^0+x^1t^1), &\ \varepsilon_1 = 1 \\
             t+x = \frac{1}{t^0}(x^1t^0 + x^0t^1), &\ \varepsilon_1 = -1             
             \end{cases}\\
        \mathcal{C}_2(x,t) &= \left(\frac{\mu(2)^2}{t}+x\right)\left(\frac{\mu(1)^2}{t}+x\right)-\frac{\mu(2)\mu(1)x}{t}   \\
        &= \frac{\mu(1)^2\mu(2)^2}{t^2} + \frac{\mu(1)^2x}{t} + \frac{\mu(2)^2x}{t} - \frac{\mu(2)\mu(1)x}{t} + x^2  \\
        &= \begin{cases}
             \frac{1}{t^2} + \frac{x}{t} + x^2 = \frac{1}{t^2}\left(x^0 + \big(1(x)-0+\frac{0}{x}\big)x^0t^1 + x^2t^2\right), &\ \varepsilon_1 = \varepsilon_2 = 1 \\
             1 + \frac{x}{t} + xt -x + x^2 = \frac{1}{t^1}\left(x^1 + \big(1(x)-1+\frac{1}{x}\big)x^1t^1 + x^1t^2\right), &\ \varepsilon_1 = -\varepsilon_2 \\
             t^2 + xt + x^2 = \frac{1}{t^2}\left(x^2 + \big(0(x)-0+\frac{1}{x}\big)x^2t^1 + x^0t^2\right), &\ \varepsilon_1 = \varepsilon_2 = -1 \\
             \end{cases}
    \end{alignedat}\]
    
Now assume that $\mathcal{C}_{\ell}$ has the hypothesized form for all $\ell < k$. We want to show that $\mathcal{C}_k$ also has this form.
Recall the recursive definition of $\mathcal{C}_k$:
\begin{align*}
\mathcal{C}_k(x,t) &= \left(\frac{\mu(k)^2}{t}+x\right)\mathcal{C}_{k-1}(x,t) - \left(\frac{\mu(k-1)\mu(k)x}{t}\right)\mathcal{C}_{k-2}(x,t) \\
&= \left(\frac{\mu(k)^2}{t}+x\right)\sum_{i = 0}^{k-1}\mathcal{C}_{k-1}^i(x)t^{i-\alpha_{k-1}} - \left(\frac{\mu(k-1)\mu(k)x}{t}\right)\sum_{i = 0}^{k-2}\mathcal{C}_{k-2}^i(x)t^{i-\alpha_{k-2}}. 
\end{align*}
We have four cases, summarized in \autoref{tab:cases}.

    \begin{table}[H]
        \centering
        \begin{tabular}{c|c|c|c|c}
            & $\varepsilon_{k-1}$ & $\varepsilon_k$ & $\alpha_k$ & $\beta_k$
            \\
            \hline
            \textit{Case I} & $1$ & $1$ & $\alpha_k = \alpha_{k-1} + 1 = \alpha_{k-2} + 2$ & $\beta_k = \beta_{k-1} = \beta_{k-2}$
            \\
            \textit{Case II} & $-1$ & $1$ & $\alpha_k = \alpha_{k-1}+1 = \alpha_{k-2} +1 $& $\beta_k = \beta_{k-1} = \beta_{k-2} + 1$
            \\            
            \textit{Case III} & $1$ & $-1$ & $\alpha_k = \alpha_{k-1} = \alpha_{k-2} + 1$ & $\beta_k = \beta_{k-1} + 1 = \beta_{k-2} + 1$
            \\            
            \textit{Case IV} & $-1$ & $-1$ & $\alpha_k = \alpha_{k-1} = \alpha_{k-2}$ & $\beta_k = \beta_{k-1} + 1 = \beta_{k-2} + 2$
        \end{tabular}
        \vspace{1em}
        
        \begin{tabular}{c|c|c|c|c|c}
            & $\varepsilon_{k-1}$ & $\varepsilon_k$ & $A_k$ & $B_k$ & $\gamma_k$ 
            \\
            \hline
            \textit{Case I} & 1 & 1 & $A_k = A_{k-1}$ & $B_k = B_{k-1}$ & $\gamma_k = \gamma_{k-1}$
            \\
            \textit{Case II} & $-1$ & 1 & $A_k = A_{k-1} + 1$ & $B_k = B_{k-1} $ & $\gamma_k = \gamma_{k-1}+1$
            \\
            \textit{Case III} & 1 & $-1$ & $A_k = A_{k-1} $ & $B_k = B_{k-1} + 1$ & $\gamma_k = \gamma_{k-1}+1$
            \\
            \textit{Case IV} & $-1$ & $-1$ & $A_k = A_{k-1}$ & $B_k = B_{k-1}$ & $\gamma_k = \gamma_{k-1}$ 
        \end{tabular}
    \caption{Summary of the four cases needed in the proof of \autoref{lem:cform}.
    \label{tab:cases}
    }
    \end{table}

\noindent \textit{Case I:} We have
\begin{align*}
\mathcal{C}_k(x,t) &= \left(\frac{1}{t}+x\right)\sum_{i = 0}^{k-1}\mathcal{C}_{k-1}^i(x)t^{i-(\alpha_{k}-1)} - \left(\frac{x}{t}\right)\sum_{i = 0}^{k-2}\mathcal{C}_{k-2}^i(x)t^{i-(\alpha_{k}-2)} \\
&= \sum_{i = 0}^{k-1}\mathcal{C}_{k-1}^i(x)t^{i-\alpha_{k}} + \sum_{i = 0}^{k-1}x\mathcal{C}_{k-1}^i(x)t^{i-\alpha_{k}+1} - \sum_{i = 0}^{k-2}x\mathcal{C}_{k-2}^i(x)t^{i-\alpha_{k}+1}.  
\end{align*}
The lowest power of $t$ is $-\alpha_k$ and its coefficient is 
\[\mathcal{C}_{k-1}^0(x) = x^{\beta_{k-1}} = x^{\beta_{k}}.\]
The highest power of $t$ is $k - \alpha_k = \beta_k$ and its coefficient is
\[x\mathcal{C}_{k-1}^{k-1}(x) = x^{\alpha_{k-1}+1} = x^{\alpha_{k}}.\] 
The second coefficient $\mathcal{C}_k^1$ is found by collecting the coefficients of $t^{1-\alpha_k}$. This gives us
\[\begin{alignedat}{1}
\mathcal{C}_k^1(x) &= \mathcal{C}_{k-1}^1(x) + x\mathcal{C}_{k-1}^0(x)-x\mathcal{C}_{k-2}^0(x) \\
&= \left(A_{k-1}x-\gamma_{k-1}+\frac{B_{k-1}}{x}\right)x^{\beta_{k-1}} + x^{\beta_{k-1}+1}-x^{\beta_{k-2}+1}  \\
&= \left(A_kx-\gamma_k+\frac{B_k}{x}\right)x^{\beta_k}.
\end{alignedat}\]

\noindent \textit{Case II:} We have
\begin{align*}
\mathcal{C}_k(x,t) &= \left(\frac{1}{t}+x\right)\sum_{i = 0}^{k-1}\mathcal{C}_{k-1}^i(x)t^{i-(\alpha_{k}-1)} - x\sum_{i = 0}^{k-2}\mathcal{C}_{k-2}^i(x)t^{i-(\alpha_{k}-1)} \\
&= \sum_{i = 0}^{k-1}\mathcal{C}_{k-1}^i(x)t^{i-\alpha_{k}} + \sum_{i = 0}^{k-1}x\mathcal{C}_{k-1}^i(x)t^{i-\alpha_{k}+1} - \sum_{i = 0}^{k-2}x\mathcal{C}_{k-2}^i(x)t^{i-\alpha_{k}+1}.  
\end{align*}
The lowest power of $t$ is $-\alpha_k$ and its coefficient is
\[\mathcal{C}_{k-1}^0(x) = x^{\beta_{k-1}} = x^{\beta_{k}}.\]
The highest power of $t$ is $k - \alpha_k = \beta_k$ and its coefficient is
\[x\mathcal{C}_{k-1}^{k-1}(x) = x^{\alpha_{k-1}+1} = x^{\alpha_{k}}.\] 
The second coefficient $\mathcal{C}_k^1$ is found by collecting the coefficients of $t^{1-\alpha_k}$. This gives us
\begin{align*}
\mathcal{C}_k^1(x) &= \mathcal{C}_{k-1}^1(x) + x\mathcal{C}_{k-1}^0(x)-x\mathcal{C}_{k-2}^0(x) \\
&= \left(A_{k-1}x-\gamma_{k-1}+\frac{B_{k-1}}{x}\right)x^{\beta_{k-1}} + x^{\beta_{k-1}+1}-x^{\beta_{k-2}+1}\\
&= \left((A_{k-1}+1)x-(\gamma_{k-1}+1)+\frac{B_{k-1}}{x}\right)x^{\beta_{k-1}} \\
&= \left(A_{k}x-\gamma_{k}+\frac{B_{k}}{x}\right)x^{\beta_{k}}.
\end{align*}

\noindent \textit{Case III:} We have
\begin{align*}
\mathcal{C}_k(x,t) &= \left(t+x\right)\sum_{i = 0}^{k-1}\mathcal{C}_{k-1}^i(x)t^{i-\alpha_{k}} - x\sum_{i = 0}^{k-2}\mathcal{C}_{k-2}^i(x)t^{i-(\alpha_{k}-1)} \\
&= \sum_{i = 0}^{k-1}\mathcal{C}_{k-1}^i(x)t^{i-\alpha_{k}+1} + \sum_{i = 0}^{k-1}x\mathcal{C}_{k-1}^i(x)t^{i-\alpha_{k}} - \sum_{i = 0}^{k-2}x\mathcal{C}_{k-2}^i(x)t^{i-\alpha_{k}+1}.  
\end{align*}
The lowest power of $t$ is $-\alpha_k$ and its coefficient is 
\[x\mathcal{C}_{k-1}^0(x) = x^{\beta_{k-1}+1} = x^{\beta_{k}}.\]
The highest power of $t$ is $k - \alpha_k = \beta_k$ and its coefficient is
\[\mathcal{C}_{k-1}^{k-1}(x) = x^{\alpha_{k-1}} = x^{\alpha_{k}}.\] 
The second coefficient $\mathcal{C}_k^1$ is found by collecting the coefficients of $t^{1-\alpha_k}$. This gives us
\begin{align*}
\mathcal{C}_k^1(x) &= \mathcal{C}_{k-1}^0(x) + x\mathcal{C}_{k-1}^1(x)-x\mathcal{C}_{k-2}^0(x) \\
&= x^{\beta_{k-1}} + \left(A_{k-1}x-\gamma_{k-1}+\frac{B_{k-1}}{x}\right)x^{\beta_{k-1}+1}-x^{\beta_{k-2}+1} \\
&= \left(A_{k-1}x-(\gamma_{k-1}+1)+\frac{B_{k-1}+1}{x}\right)x^{\beta_{k-1}+1}   \\
&= \left(A_{k}x-\gamma_{k}+\frac{B_{k}}{x}\right)x^{\beta_{k}}. 
\end{align*}

\noindent \textit{Case IV:} We have
\begin{align*}
\mathcal{C}_k(x,t) &= \left(t+x\right)\sum_{i = 0}^{k-1}\mathcal{C}_{k-1}^i(x)t^{i-\alpha_{k}} - tx\sum_{i = 0}^{k-2}\mathcal{C}_{k-2}^i(x)t^{i-\alpha_{k}} \\
&= \sum_{i = 0}^{k-1}\mathcal{C}_{k-1}^i(x)t^{i-\alpha_{k}+1} + \sum_{i = 0}^{k-1}x\mathcal{C}_{k-1}^i(x)t^{i-\alpha_{k}} - \sum_{i = 0}^{k-2}x\mathcal{C}_{k-2}^i(x)t^{i-\alpha_{k}+1}.  
\end{align*}
The lowest power of $t$ is $-\alpha_k$ and its coefficient is
\[x\mathcal{C}_{k-1}^0(x) = x^{\beta_{k-1}+1} = x^{\beta_{k}}.\]
The highest power of $t$ is $k - \alpha_k = \beta_k$ and its coefficient is
\[\mathcal{C}_{k-1}^{k-1}(x) = x^{\alpha_{k-1}} = x^{\alpha_{k}}.\] 
The second coefficient $\mathcal{C}_k^1$ is found by collecting the coefficients of $t^{1-\alpha_k}$. This gives us
\[\begin{alignedat}{1}
\mathcal{C}_k^1(x) &= \mathcal{C}_{k-1}^0(x) + x\mathcal{C}_{k-1}^1(x)-x\mathcal{C}_{k-2}^0(x) \\
&= x^{\beta_{k-1}} + \left(A_{k-1}x-\gamma_{k-1}+\frac{B_{k-1}}{x}\right)x^{\beta_{k-1}+1} -x^{\beta_{k-2}+1} \\
&= \left(A_kx-\gamma_k+\frac{B_k}{x}\right)x^{\beta_k}.
\end{alignedat}\]
\end{proof}

\section{Classification results and examples} \label{sec:classification}

In this section, we investigate classification questions by applying the results from \autoref{sec:alexander} with the aim of identifying which knots can be realized as spiral knots. 
A priori, it is not clear whether all periodic fibered knots are spiral; however, as discussed in \autoref{subsec:properties}, \autoref{cor:deg} and \autoref{cor:coeff} obstruct this.
(In particular, these results imply that the knot $8_{21}$, which is periodic and fibered, is not spiral.)
We will see evidence in \autoref{subsec:examples} that spiral knots appear to make up a rather small subfamily of the class of periodic knots. 

In \autoref{subsec:invariants}, we determine a formula for the genus of any spiral knot and state a condition on the determinant of a spiral knot with $p$ even. Then in \autoref{subsec:torus}, we discuss properties of torus knots which do not extend to the broader class of spiral knots. Finally in \autoref{subsec:examples}, we tabulate low crossing spiral knots and discuss some implications of our tabulations.

\subsection{Invariants of spiral knots} \label{subsec:invariants}
One immediate consequence of our formula for the Alexander polynomials of spiral knots from \autoref{Ther:Alex} is the following statement about the determinants of spiral knots. Recall that evaluating the Alexander polynomial at $t=-1$ recovers the determinant.

\begin{proposition}
If $p$ is even, then $q \mid \det(S(p,q,\varepsilon))$.
\end{proposition}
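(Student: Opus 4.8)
The plan is to evaluate the formula of \autoref{Ther:Alex} at $t=-1$, using that $\det(S(p,q,\varepsilon))=\Delta_{S(p,q,\varepsilon)}(-1)$ up to sign (which is harmless, since the Alexander polynomial is only defined up to $\pm t^n$ and $(-1)^n=\pm1$). Writing $\omega=e^{2\pi i/q}$ and $f(x):=\mathcal{C}_{p-1}(x,-1)$, \autoref{Ther:Alex} gives
\[
\det(S(p,q,\varepsilon))=\pm\prod_{\ell=1}^{q-1}f(\omega^\ell).
\]
First I would record that specializing the recursion of \autoref{Ther:Alex} to $t=-1$ is harmless (as $-1\neq0$) and clears all denominators: the indices appearing satisfy $\mu(i)^2|_{t=-1}=1$ and $\mu(i-1)\mu(i)|_{t=-1}=\varepsilon_{i-1}\varepsilon_i\in\{\pm1\}$, so, setting $f_k:=\mathcal{C}_k(x,-1)$, we get $f_0=1$, $f_1=x-1$, and $f_k=(x-1)f_{k-1}+\varepsilon_{k-1}\varepsilon_k\,x\,f_{k-2}$ for $k\geq2$. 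In particular each $f_k\in\mathbb{Z}[x]$ and $f=f_{p-1}$ is a genuine (monic, degree $p-1$) integer polynomial.

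The key observation is that when $p$ is even, $x=1$ is a root of $f$. Evaluating the recursion at $x=1$ gives $f_k(1)=\varepsilon_{k-1}\varepsilon_k\,f_{k-2}(1)$ for $k\geq2$, while $f_1(1)=0$; hence $f_k(1)=0$ for every odd $k$. Since $p$ is even, $p-1$ is odd, so $f(1)=f_{p-1}(1)=0$. Dividing the integer polynomial $f$ by the monic polynomial $x-1$ therefore yields $f(x)=(x-1)g(x)$ with $g\in\mathbb{Z}[x]$.

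With this factorization,
\[
\prod_{\ell=1}^{q-1}f(\omega^\ell)=\Bigg(\prod_{\ell=1}^{q-1}(\omega^\ell-1)\Bigg)\Bigg(\prod_{\ell=1}^{q-1}g(\omega^\ell)\Bigg).
\]
The first factor equals $\pm q$, since $\prod_{\ell=1}^{q-1}(x-\omega^\ell)=1+x+\cdots+x^{q-1}$ has value $q$ at $x=1$. The second factor is an integer: $\prod_{\ell=1}^{q-1}g(\omega^\ell)$ is, up to sign, the resultant of $g$ with the monic integer polynomial $1+x+\cdots+x^{q-1}$ (equivalently, it is a symmetric polynomial with integer coefficients in the roots $\omega,\dots,\omega^{q-1}$ of that polynomial, hence an integer by the fundamental theorem of symmetric polynomials). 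Consequently $q\mid\prod_{\ell=1}^{q-1}f(\omega^\ell)$, and so $q\mid\det(S(p,q,\varepsilon))$.

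I expect the only substantive step to be the observation that $p$ even forces $1$ to be a root of $\mathcal{C}_{p-1}(x,-1)$; the rest is routine manipulation of products over $q$-th roots of unity. The minor things to be careful about are that the $t=-1$ specialization of the recursion is legitimate and yields integer coefficients, and the sign bookkeeping coming from the $\pm t^n$ indeterminacy of $\Delta$, both of which are immediate.
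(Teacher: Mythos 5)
Your proposal is correct and follows essentially the same route as the paper: specialize the recursion at $t=-1$, extract the factor $x-1$ from $\mathcal{C}_{p-1}(x,-1)$ (the paper does this by inducting on the factorization directly, you by evaluating at $x=1$ and invoking the factor theorem — a cosmetic difference), and then use $\prod_{\ell=1}^{q-1}(\omega^\ell-1)=\pm q$ together with the resultant/symmetric-function argument to see the remaining product is an integer. No gaps.
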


\begin{proof}
Suppose $p$ is even. We show that $\mathcal{C}_{p-1}(x,-1)$ has the form
\[\mathcal{C}_{p-1}(x,-1) = (x - 1)Q(x),\]
where $Q(x)\in\mathbb{Z}[x]$. We proceed by induction. Note that when $t=-1$, $\mu(k)=\varepsilon_k$ for all $k\in\{1,\dots,p-1\}$, and the Laurent polynomials $\mathcal{C}_k(x,t)$ become polynomials $\mathcal{C}_k(x,-1)\in\mathbb{Z}[x]$. When $p= 2$, $\mathcal{C}_1(x,-1) = x - 1$. Assume $\mathcal{C}_m(x,-1) = (x-1)q(x)$ for some odd $m$ and polynomial $q(x)\in\mathbb{Z}[x]$. Then, by substituting $t = -1$ into the recursive formula from \autoref{Ther:Alex}, we have
\begin{align*}
\mathcal{C}_{m+2}(x,-1) &= (x - 1)\mathcal{C}_{m+1}(x,-1) + \varepsilon_{m+1}\varepsilon_m x\mathcal{C}_{m}(x,-1) \\
&= (x - 1)\mathcal{C}_{m+1}(x,-1) + \varepsilon_{m+1}\varepsilon_m x(x-1)q(x) \\
&= (x-1)(\mathcal{C}_{m+1}(x,-1)+ \varepsilon_{m+1}\varepsilon_m xq(x)).
\end{align*}
Defining $Q(x) \vcentcolon =\mathcal{C}_{m+1}(x,-1)+ \varepsilon_{m+1}\varepsilon_m xq(x)\in\mathbb{Z}[x]$ yields the claimed form for $\mathcal{C}_{p-1}(x,-1)$ for all even $p$. 

Now we evaluate our formula for the Alexander polynomial at $t=-1$, assuming $p$ is even:
\begin{align*}
\Delta(-1) &= \prod_{\ell=1}^{q-1}\mathcal{C}_{p-1}(e^{\frac{2\pi \ell i}{q}},-1) \\
&= \prod_{\ell=1}^{q-1}(e^{\frac{2\pi \ell i}{q}}-1)Q(e^{\frac{2\pi \ell i}{q}}) \\
& = \prod_{\ell=1}^{q-1}(e^{\frac{2\pi \ell i}{q}}-1)\prod_{\ell=1}^{q-1}Q(e^{\frac{2\pi \ell i}{q}}) \\
&= (-1)^{q-1}q \prod_{\ell=1}^{q-1}Q(e^{\frac{2\pi \ell i}{q}}) \\ 
&= (-1)^{q-1}q \textit{Res}(1 + x + x^2 + \ldots + x^{q-1}, Q(x)),
\end{align*}
where $\textit{Res}(\cdot,\cdot)$ is the resultant of two polynomials. Since $Q(x)$ has integer coefficients, $\textit{Res}(\sum_{i=0}^{q-1}x^i, Q(x))$ is the determinant of an integer matrix, which is also an integer. Therefore $q \mid \Delta(-1) = \det(S(p,q,\varepsilon))$.
\end{proof}

We now present a formula for the genus of a spiral knot which follows quickly from our results on the Alexander polynomial. Recall that the degree of the Alexander polynomial $\Delta$ and genus $g$ of a knot $K$ are related by $\frac{\deg{\Delta}}{2} \leq g$. When $K$ is a spiral knot, \autoref{cor:deg} implies $g \geq \frac{(p-1)(q-1)}{2}$, and \autoref{lem:upgenus} implies $g \leq \frac{(p-1)(q-1)}{2}$. We therefore obtain a formula for the genus of any spiral knot.

\genus

Note that in particular, $S(p,q,\varepsilon)=S(p',q',\varepsilon')$ implies $(p-1)(q-1)=(p'-1)(q'-1)$. Additionally, the above genus formula agrees with the genus formula for torus knots, so we have found an invariant of torus knots which generalizes to the larger family of spiral knots.

Just as \autoref{Ther:Alex} generalizes to give a formula for the single-variable Alexander polynomials of spiral links, a modified version of \autoref{Ther:genus} also holds for spiral links. The minimum genus of a Seifert surface for the spiral link $S(n,k,\varepsilon)$ is \[g(S(n,k,\varepsilon)) = \frac{(n-1)(k-1)-(\gcd(n,k)-1)}{2}.\] See \autoref{sec:future} for more details.

Note that we may obtain \autoref{Ther:genus} independently using Edmonds' conditions \cite{Edmonds}, which in this context state that there are nonnegative integers $\Gamma$ and $\Lambda$ such that \[ g(S(p,q,\varepsilon))=q\Gamma+\frac{(q-1)(\Lambda-1)}{2},\] where $\Lambda\geq p$ (see also Chapter 8 of \cite{Livingston}). This independently gives the lower bound $\frac{(p-1)(q-1)}{2}$.

\subsection{Comparing spiral knots to torus knots} \label{subsec:torus}

One might wonder which additional properties of torus knots generalize to spiral knots. For instance, torus knots have the interesting property that switching the values of $p$ and $q$ does not change the knot; that is, $T_{p,q} = T_{q,p}$. In contrast with \autoref{Ther:genus}, among all spiral knots, this property holds \textit{only} for torus knots. We first show that a torus knot admits a unique spiral representation up to swapping $p$ and $q$.

\begin{proposition} \label{prop:torus}
$S(p,q,\varepsilon)$ and $S(q,p,\varepsilon)$ for $\varepsilon = (1,\ldots,1)$ (resp. $\varepsilon=(-1,\ldots,-1)$) are the unique spiral representations of the torus knot $T_{p,q}$ (resp. $T_{-p,q}=T_{p,-q}$) for $p,q>1$.
\end{proposition}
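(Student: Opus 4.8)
The plan is to pin down exactly which spiral knots $S(p,q,\varepsilon)$ can equal a given torus knot $T_{p,q}$, using the Alexander polynomial as the primary obstruction together with genus bounds. First I would observe that a spiral knot $S(n,k,\varepsilon)$ realizing $T_{p,q}$ must have the same Alexander polynomial as $T_{p,q}$, and in particular the same degree; by \autoref{Ther:genus} (equivalently \autoref{cor:deg}) this forces $(n-1)(k-1) = (p-1)(q-1)$. This alone does not pin down $(n,k)$, so I would then bring in the second coefficient. By \autoref{cor:coeff}, the second coefficient of $\Delta_{S(n,k,\varepsilon)}(t)$ is $\pm(k\gamma_{n-1}+1)$, where $\gamma_{n-1}$ counts the sign changes in $\varepsilon$. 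On the other hand, the torus knot $T_{p,q}$ has Alexander polynomial $\frac{(t^{pq}-1)(t-1)}{(t^p-1)(t^q-1)} = 1 - t + t^p - \dots$ (for $2 \le p < q$), whose second coefficient is $-1$ unless $p=2$. Comparing, we get $k\gamma_{n-1}+1 = \pm 1$, hence $\gamma_{n-1}=0$, i.e. $\varepsilon$ has no sign changes, so $\varepsilon = (1,\dots,1)$ or $(-1,\dots,-1)$; the latter gives the mirror, which (since $T_{p,q}$ is chiral for $p,q \ge 2$) corresponds to $T_{-p,q}=T_{p,-q}$. Thus any spiral representation of $T_{p,q}$ is already a torus-knot-type braid $S(n,k,(1,\dots,1)) = T_{n,k}$.

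Having reduced to the case $\varepsilon = (1,\dots,1)$, the remaining task is to show $T_{n,k} = T_{p,q}$ with $(n-1)(k-1)=(p-1)(q-1)$ and $n,k,p,q \ge 2$ forces $\{n,k\}=\{p,q\}$. This is the classical unknotting/classification statement for torus knots: $T_{a,b} \cong T_{c,d}$ (unoriented, up to mirror, for $a,b,c,d \ge 2$) if and only if $\{a,b\}=\{c,d\}$, which follows e.g. from the genus formula plus the fact that the two ``wrapping numbers'' are recovered as the bridge number $\min\{a,b\}$ — or more elementarily here, from comparing the full Alexander polynomials, since $\Delta_{T_{a,b}}$ determines $\{a,b\}$ among coprime pairs $\ge 2$. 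I would cite the standard classification of torus knots for this step rather than reprove it.

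The last piece is the $p=2$ exceptional case, where the second-coefficient argument degenerates (the Alexander polynomial of $T_{2,q}$ is $1 - t + t^2 - \dots + t^{q-1}$, second coefficient $-1$, so the argument $k\gamma_{n-1}+1 = -1$ gives $k\gamma_{n-1}=-2$, again impossible for $k \ge 2$, $\gamma_{n-1}\ge 0$ — so in fact this case is fine too, just needs to be checked separately because the ``$t^p$'' and ``$-t$'' terms could collide when $p=2$). So I would handle $p=2$ by direct inspection of $\Delta_{T_{2,q}}$ rather than the generic expansion. The main obstacle is making the comparison of second coefficients airtight: I need the expansion of $\Delta_{T_{p,q}}(t)$ near its lowest-degree term to be genuinely $1 - t + (\text{higher order})$ for $2 \le p < q$, i.e. that no cancellation occurs, and to correctly track the $\pm$ signs and the overall $\pm t^\ell$ ambiguity in the Alexander polynomial so that the equality $k\gamma_{n-1}+1 = \pm 1$ is the right conclusion. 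Once that sign bookkeeping is done, the rest is assembling the pieces and invoking the torus knot classification.
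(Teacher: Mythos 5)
Your proof is correct and follows the same overall strategy as the paper: use \autoref{cor:coeff} to force $\gamma_{n-1}(\varepsilon)=0$, hence $\varepsilon=(1,\dots,1)$ or $(-1,\dots,-1)$, and then invoke the classification of torus knots to conclude $\{n,k\}=\{p,q\}$. The one place you diverge is in establishing that the second Alexander coefficient of a torus knot is $\pm 1$: you expand the closed form $\frac{(t^{pq}-1)(t-1)}{(t^p-1)(t^q-1)}$ near its lowest-degree term and flag the no-cancellation check and the $p=2$ case as the main obstacles, whereas the paper simply applies its own \autoref{cor:coeff} to the representation $S(p,q,(1,\dots,1))$, where $\gamma_{p-1}=0$ gives $\pm(q\cdot 0+1)=\pm 1$ immediately, sidestepping all of that sign bookkeeping. (Your expansion does go through --- the coefficient of $t$ is $-1$ for all $2\le p<q$ with no collision even when $p=2$ --- and your degree comparison $(n-1)(k-1)=(p-1)(q-1)$ is harmless but unnecessary once the torus knot classification is invoked.)
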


\begin{proof}
    First, observe that $S(p,q,(1,\dots,1))=T_{p,q}=T_{q,p}=S(q,p,(1,\dots,1))$, so we have the two spiral representations for $T_{p,q}$ as claimed.

    Toward proving the uniqueness of these representations, we first show that the second coefficient of the Alexander polynomial of a torus knot is always $\pm 1$. Let $T_{p,q}$ be a torus knot with $p,q>1$. Then we can represent $T_{p,q}$ as $S(p,q,\varepsilon)$ where $\varepsilon = (1,\ldots,1)$. Recall that $\gamma_{p-1}(\varepsilon)$ counts the sign changes of $\varepsilon$, which is $0$ in this case. We have by \autoref{cor:coeff} that the second coefficient of the Alexander polynomial of $T_{p,q} = S(p,q,\varepsilon)$ is $\pm(q\gamma_{p-1}(\varepsilon)+1) = \pm(q\cdot 0+1) = \pm 1$.

    Now suppose that $S(p',q',\varepsilon)$ for some $p',q',\varepsilon$ equals $T_{p,q}$ for $p,q>1$. We will show that we must have $\{p',q'\}=\{p,q\}$ and $\varepsilon=(1,\ldots,1)$. Because $S(p',q',\varepsilon)$ is a torus knot, we have by the above argument that the second coefficient of its Alexander polynomial is $\pm1$. Note that $q'\gamma_{p'-1}(\varepsilon)+1$ cannot equal $-1$ since $q'\gamma_{p'-1}(\varepsilon)$ is a product of two nonnegative integers. So $q'\gamma_{p'-1}(\varepsilon)+1=1$ and $\gamma_{p'-1}(\varepsilon)=0$, meaning $\varepsilon = (1,\ldots,1)$ or $(-1,\ldots,-1)$. Thus $S(p',q',\varepsilon)$ is a torus knot, and either $T_{p,q}=S(p',q',\varepsilon)=T_{p',q'}$ or $T_{p,q}=S(p',q',\varepsilon)=T_{-p',q'}$. 
    Therefore, because $p,q,p',q'>1$, we must have $S(p',q',\varepsilon)=T_{p',q'}$, since the isotopy class of the nontrivial torus knot $T_{r,s}$ determines the set $\{r,s\}$ up to an overall sign.
    The proof for $S(p',q',\varepsilon)=T_{-p,q}$ is entirely similar.
\end{proof}

Now we show that torus knots are the only spiral knots for which we can swap $p$ and $q$.

\swappq

\begin{proof}
Suppose $S(p,q,\varepsilon)$ is a torus knot. By \autoref{prop:torus},  $\varepsilon = (1,\dots,1)$ and $S(p,q,\varepsilon)=T_{p,q}$, or $\varepsilon = (-1,\dots,-1)$ and $S(p,q,\varepsilon)=T_{p,-q}$. In the first case, let $\varepsilon' = (1,\dots,1)$. Then $S(p,q,\varepsilon) = T_{p,q} = T_{q,p} = S(q,p,\varepsilon')$. The proof in the second case is the same.

Conversely, suppose $S(p,q,\varepsilon) = S(q,p,\varepsilon')$ for some $\varepsilon'$.
\autoref{cor:coeff} implies 
$(\Delta_{S(p,q,\varepsilon)})^1 = \pm(\gamma_{p-1}(\varepsilon)q + 1)$ and $(\Delta_{S(q,p,\varepsilon')})^1 = \pm(\gamma_{q-1}(\varepsilon')p + 1)$. So
$\gamma_{p-1}(\varepsilon)q = \gamma_{q-1}(\varepsilon')p.$
Since $\gcd(p,q) = 1$ and $p,q > 1$, $p$ must divide $\gamma_{p-1}(\varepsilon)$.
But $\gamma_{p-1}(\varepsilon)$ is the number of sign changes in $\varepsilon$, so $0\leq\gamma_{p-1}(\varepsilon) \leq p-2 < p$. It follows that $\gamma_{p-1}(\varepsilon) = 0$, which implies that $S(p,q,\varepsilon)$ is a torus knot. Note that by \autoref{prop:torus}, either $\varepsilon=(1,\ldots,1)$ and $\varepsilon'=(1,\ldots,1)$, or $\varepsilon=(-1,\ldots,-1)$ and $\varepsilon'=(-1,\ldots,-1)$.
\end{proof}

\begin{remark} \label{rmk:facts} We note some additional properties of torus knots that do not extend to spiral knots. Nontrivial torus knots are known to not be slice and have crossing number $\min((p-1)q, (q-1)p)$, but we can see from our tables in \autoref{subsec:examples} that in general the same is not true for spiral knots. This is a recovery of known results, as $10_{123}$ and $4_1$ were known to be weaving \cite{DiPrisaSavk}, but we collect them here for completeness. Finally, the Alexander polynomial classifies torus knots up to mirror image, but it turns out that the same is not true for spiral knots.
    \begin{enumerate}
        \item[(1)] There exist slice spiral knots. $S(3,5,(1,-1)) = 10_{123}$ is a slice spiral knot. As such, the Milnor conjecture does not extend to spiral knots. 
        \item[(2)] Not all spiral knots have crossing number $\min((p-1)q, (q-1)p)$. $S(3,2,(1,-1)) = 4_1$ is a spiral knot with crossing number $4 > 3 = \min((2)2, (1)3)$.
        \item[(3)] The Alexander polynomial does not classify spiral knots. The spiral knots in \autoref{fig:difpair} have the same Alexander polynomial but different Jones polynomials.
        \item[(4)] Furthermore, the Alexander polynomial and Jones polynomial together are still not enough to classify spiral knots. The spiral knots in \autoref{fig:samepair} have the same Alexander polynomial and Jones polynomial. However, they can be distinguished by their second cabled Jones polynomial, which we computed using KnotFolio \cite{KnotFolio}. 
    \end{enumerate}
Additionally, the pairs of spiral knots $S(9,q,\varepsilon_1)$ and $S(9,q,\varepsilon_2)$ for $\varepsilon_1$ and $\varepsilon_2$ as in \autoref{fig:samepair} continue to have the same Alexander polynomial for $q$ up to (at least) 13, although their Jones polynomials are only the same for $q=2$.
\end{remark}

\begin{figure}[ht!]
\centering
    \begin{minipage}[c]{0.45\linewidth}   
    \includegraphics[width=\linewidth]{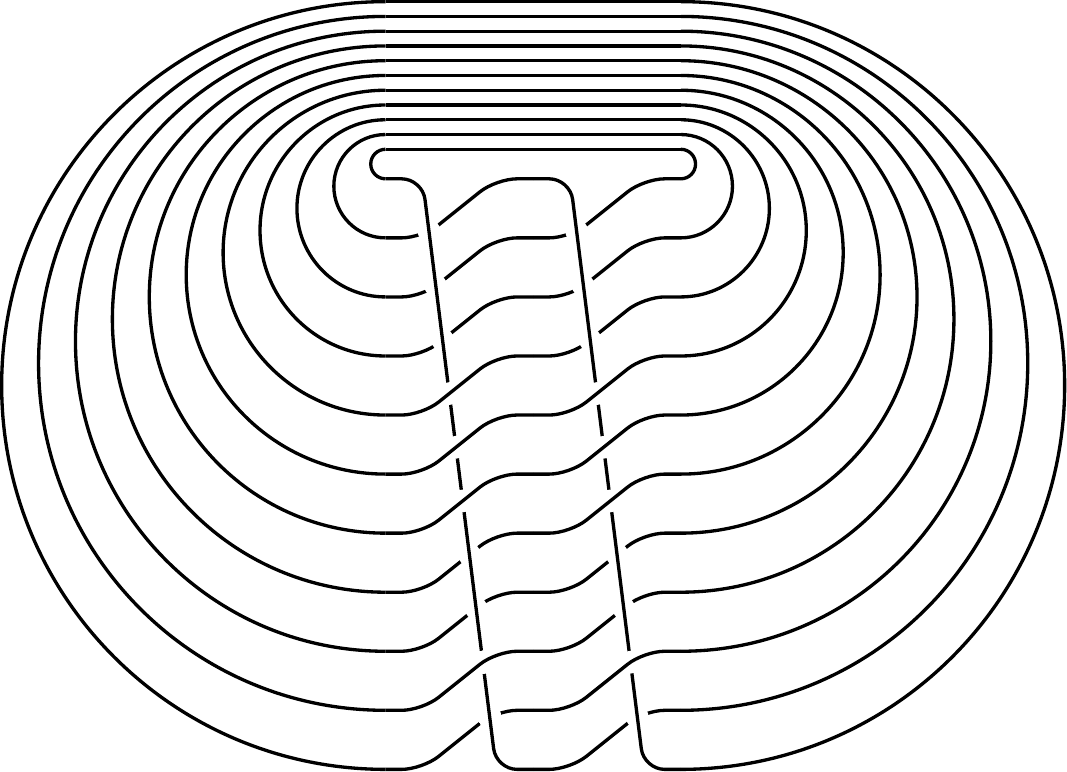}
    \caption*{S(11,2,(1,1,1,-1,-1,-1,1,1,-1,1))}
    \end{minipage}
    \hspace{2em}
    \begin{minipage}[c]{0.45\linewidth}
    \includegraphics[width=\linewidth]{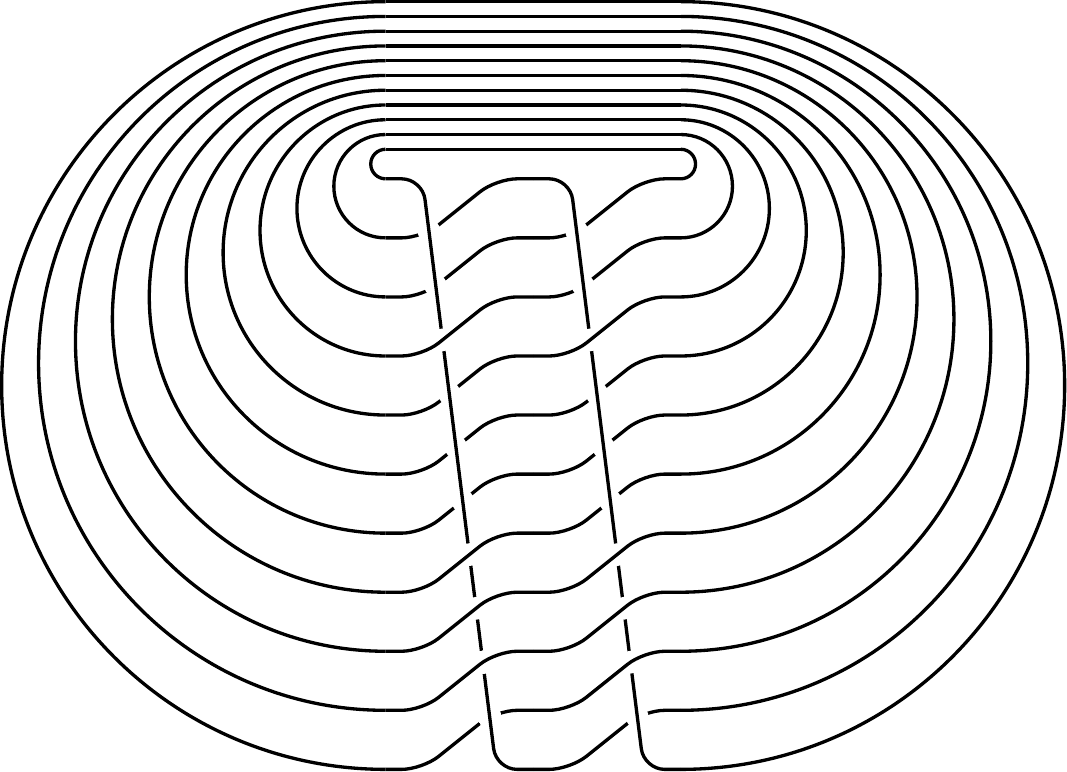}
    \caption*{S(11,2,(1,1,-1,1,1,1,-1,-1,-1,1))}
    \end{minipage}
    \caption{Two spiral knots with the same Alexander polynomial and different Jones polynomials, computed with SageMath via CoCalc \cite{sagemath}. \label{fig:difpair}}
\end{figure}

\begin{figure}[ht!]
\centering
    \begin{minipage}[c]{0.4\linewidth}   
    \includegraphics[width=\linewidth]{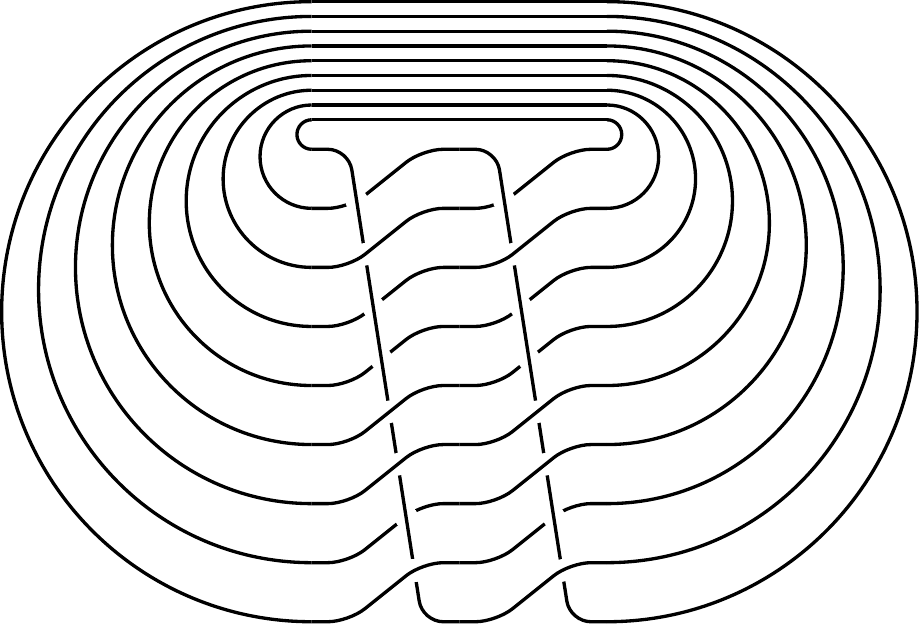}
    \caption*{S(9,2,(1,-1,1,1,-1,-1,1,-1))}
    \end{minipage}
    \hspace{2em}
    \begin{minipage}[c]{0.4\linewidth}
    \includegraphics[width=\linewidth]{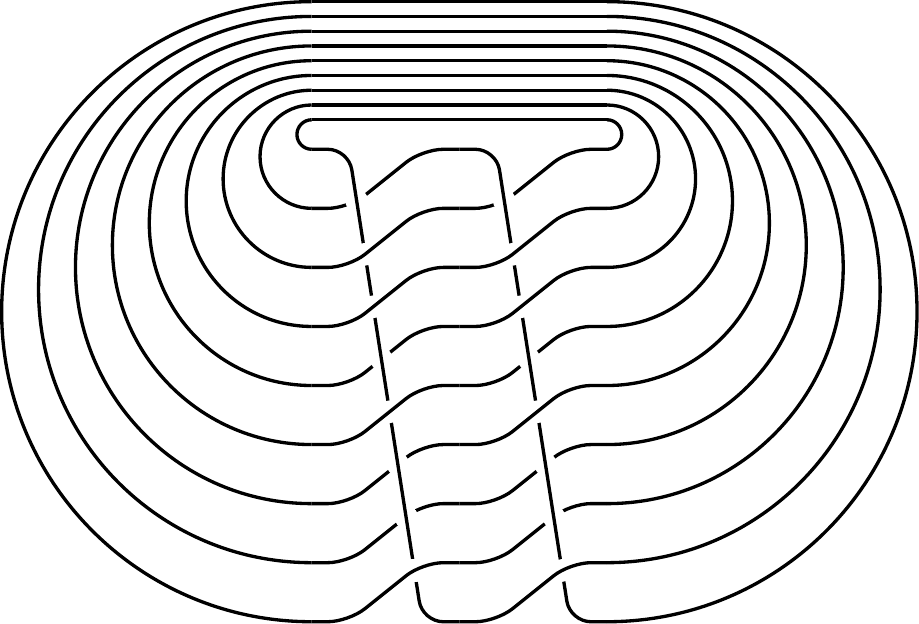}
    \caption*{S(9,2,(1,-1,-1,1,-1,1,1,-1))}
    \end{minipage}
    \caption{Two (distinct) spiral knots with the same Alexander polynomial and Jones polynomial, computed with SageMath via CoCalc \cite{sagemath}. They are distinguished by their second cabled Jones polynomial, computed via KnotFolio \cite{KnotFolio}. \label{fig:samepair}}
\end{figure}

\subsection{Classification of small spiral knots} \label{subsec:examples}

Now we present a tabulation of spiral knots with low crossing number up to mirror image. Since $2g$ is a lower bound for the crossing number of a knot, the crossing number $c$ of a spiral knot $S(p,q,\varepsilon)$ is bounded below by $(p-1)(q-1) \le c$. If $S(p,q,\varepsilon)$ has crossing number 12 or fewer, $p$ and $q$ must satisfy $(p-1)(q-1) \leq 12$. To identify every prime spiral knot with crossing number less than or equal to 12, we considered every spiral knot satisfying this upper bound, and checked with SnapPy \cite{SnapPy} that these knots are all prime. For every such spiral knot, we either identified the knot among those in the knot table with $c\leq 12$ using SnapPy \cite{SnapPy}, or obstructed this by showing no such knot in the knot table had matching Alexander polynomial. 
We summarize our findings in \autoref{tab:table1} and \autoref{tab:table2}, where an entry
\begin{itemize}
\item is left blank when the corresponding spiral knot does not satisfy $(p-1)(q-1) \leq 12$, 
\item contains a dash if $p$ and $q$ are not co-prime, 
\item contains an X if the knot was not among those in the knot table with crossing number $c\leq 12$, 
\item contains the name of the knot if it was present in the knot table. 
\end{itemize}
For the sake of simplicity, in our tables we adapt the convention that $\varepsilon_1 = 1$. (Negating $\varepsilon$ of a spiral knot will produce its mirror image.) We also only consider $\varepsilon$ vectors up to inverting the order of the coordinates. Note that no spiral knot with $p=6$ has crossing number low enough to appear in our tables, given the restrictions on $q$, which is why $p$ jumps from $5$ to $7$ between \autoref{tab:table1} and \autoref{tab:table2}.

\begin{table}[h]
    \centering
    \begin{tabular}{c|c|c|c|c|c|c}
         $\varepsilon \backslash$q & 2 & 3 & 4 & 5 & 6 & 7 \\
         \hline
         (1,1) & $3_1$ & -- & $8_{19}$ & $10_{124}$ & -- & X \\
         \hline
         (1,-1) & $4_1$ & -- & $8_{18}$ & $10_{123}$ & -- & X \\
         \hline
         (1,1,1) & -- & $8_{19}$ & -- & X  & -- &\\
         \hline
         (1,1,-1) & -- & $9_{47}$ & -- & X & -- &\\
         \hline
         (1,-1,1) & -- & $9_{40}$ & -- & X & -- &\\
         \hline
         (1,1,1,1) & $5_1$ & $10_{124}$ & & -- & &\\
         \hline
         (1,1,1,-1) & $6_2$ & $11n_{133}$ & & -- & &\\
         \hline
         (1,1,-1,1) & $7_6$ & $12n_{839}$& & -- & &\\
         \hline
         (1,1,-1,-1) & $6_3$ & $12n_{706}$& & -- & &\\
         \hline
         (1,-1,1,-1) & $8_{12}$ & $12a_{1019}$ & & -- & &\\
         \hline
         (1,-1,-1,1) & $7_7$ & $12n_{837}$& & -- & &\\
    \end{tabular}
    \caption{Spiral knots with low crossing number, tabulated by $\varepsilon$ and $q$.}
    \label{tab:table1}
\end{table}

\begin{table}[h]
    \centering
    \begin{tabular}{c|c|c|c|c}
(1,1,1,1,1,1) & (1,1,1,1,1,-1) & (1,1,1,1,-1,1) & (1,1,1,1,-1,-1) & (1,1,1,-1,1,1) \\
\hline
$7_1$ & $8_2$ & $9_{11}$ & $8_7$ & $9_{20}$ \\
\hline
(1,1,1,-1,1,-1) & (1,1,1,-1,-1,1) & (1,1,1,-1,-1,-1) & (1,1,-1,1,1,-1) & (1,1,-1,1,-1,1) \\
\hline
$10_{29}$ & $9_{26}$ & $8_9$ & $10_{44}$ & $11a_{159}$ \\
\hline
(1,1,-1,1,-1,-1) & (1,1,-1,-1,1,1) & (1,1,-1,-1,1,-1) & (1,1,-1,-1,-1,1) & (1,-1,1,1,1,-1) \\
\hline
$10_{43}$ & $9_{31}$ & $10_{42}$ & $9_{27}$ & $10_{41}$ \\
\hline
(1,-1,1,1,-1,1) & (1,-1,1,-1,1,-1) & (1,-1,1,-1,-1,1) & (1,-1,-1,1,1,-1) & (1,-1,-1,-1,-1,1) \\
\hline
$11a_{121}$ & $12a_{477}$ & $11a_{96}$ & $10_{45}$ & $9_{17}$ \\
    \end{tabular}
    \caption{Every spiral knot of the form $S(7,2,\varepsilon)$, tabulated by $\varepsilon$.}
    \label{tab:table2}
\end{table}

We can make a number of observations by referencing \autoref{tab:table1} and \autoref{tab:table2}. Right away we see that spiral knots appear to be a relatively small family of knots; there are 39 spiral knots
with minimum crossing number less than or equal to 12, compared with the 676 periodic fibered prime knots and 2,978 total prime knots with less than or equal to 12 crossings!
(In these counts we are not considering mirror images to be distinct.)

Further, one remarkable consequence of Thurston's work concerning the geometry of $3$-manifolds is that every knot is contained in exactly one of the following families: (1) torus knots, (2) satellite knots, or (3) hyperbolic knots \cite{Thurston}. All torus knots are spiral knots, and we have produced many examples of hyperbolic spiral knots, which raises the question of whether all (periodic fibered) hyperbolic knots can be realized as spiral knots. However, as noted in \autoref{subsec:properties}, the knot $8_{21}$ is periodic, fibered, and hyperbolic, but not spiral.

\begin{corollary} \label{cor:hyp}
Not all (periodic fibered) hyperbolic knots are spiral knots.
\end{corollary}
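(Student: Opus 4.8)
The plan is to produce an explicit counterexample rather than argue abstractly, and the natural candidate is the knot $5_2$, which was already flagged in \autoref{subsec:properties}. First I would recall the two facts that make $5_2$ suitable: it is a hyperbolic knot, and it is periodic (with period $2$), both of which are recorded in KnotInfo \cite{KnotInfo}. So $5_2$ sits in the intersection of the hyperbolic and periodic families, exactly where the statement needs a witness.

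Next I would invoke the Alexander polynomial obstruction. The knot $5_2$ has $\Delta_{5_2}(t) = 2t^2 - 3t + 2$, whose leading (and trailing) coefficient is $2$, so $\Delta_{5_2}(t)$ is not monic. By \autoref{cor:monic}, every spiral knot has monic Alexander polynomial, so $5_2$ cannot be a spiral knot. This exhibits a periodic hyperbolic knot that is not spiral, which is precisely the content of the corollary.

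There is essentially no obstacle here: the corollary is a direct consequence of \autoref{cor:monic} together with the existence of a single periodic hyperbolic knot with non-monic Alexander polynomial, and $5_2$ serves this role. The only point requiring any care is confirming that $5_2$ is genuinely hyperbolic and periodic, but this is classical and is cited to \cite{KnotInfo}. One could alternatively run the argument using \autoref{cor:deg} and the census of $S(3,2,\varepsilon)$ from \autoref{subsec:examples}, as sketched earlier in the paper, but the monic obstruction is the cleanest route and is what I would write up.
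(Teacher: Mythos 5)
Your proposal is correct and matches the paper's argument exactly: the paper also uses $5_2$, a periodic hyperbolic knot with non-monic Alexander polynomial $2t^2-3t+2$, together with \autoref{cor:monic} to conclude it is not spiral. No gaps.
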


\section{Discussion and future directions} \label{sec:future}

Our investigation into the classification of spiral knots has provided a fruitful line of research, although many interesting questions remain. In this section, we present several future directions of study, posing questions and presenting some conjectures along the way. 

\subsection*{Classification questions}

From our tables and \autoref{cor:hyp}, we know that we cannot realize all periodic fibered hyperbolic knots as spiral knots, but what else can we say about the relationship between spiral knots and Thurston's classification? As we produced \autoref{tab:table1} and \autoref{tab:table2} ``by hand,'' only knots with fairly low crossing number are included, so in particular we do not have an example of a spiral satellite knot.

\begin{question}
    Are all spiral knots either torus knots or hyperbolic knots? That is, do there exist spiral knots which are satellite knots?
\end{question}

Another direction of study asks when two spiral knots with the same $p$ and $q$ values are the same. Given some $p,q,\varepsilon$, and $\varepsilon'$, note that if $\varepsilon'$ is $\varepsilon$ with its coordinates inverted, then $S(p,q,\varepsilon')$ will be the same knot as $S(p,q,\varepsilon)$, and if $\varepsilon'$ is $\varepsilon$ with its coordinates negated, then $S(p,q,\varepsilon')$ will be the mirror image of $S(p,q,\varepsilon)$. This raises the following question: Are there any other conditions on $\varepsilon$ besides inversion or negation that ensure $S(p,q,\varepsilon)$ and $S(p,q,\varepsilon')$ are the same knot, up to mirror image?

\begin{question} \label{Q:pqpairs}
When is $S(p,q,\varepsilon)$ the same knot as $S(p,q,\varepsilon')$?
\end{question}

\noindent We consider the more general question of when $S(p,q,\varepsilon)=S(p',q',\varepsilon')$ for distinct triples $(p,q,\varepsilon)$ and $(p',q',\varepsilon')$ later in this section.

\subsection*{Extension to links}

Unlike knots, links in $S^3$ with multiple components have several associated free abelian covers and therefore several associated Alexander polynomials. Two of these are the \emph{multivariable Alexander polynomial} corresponding to the maximal abelian cover of the link exterior and the \emph{single-variable Alexander polynomial} corresponding to the infinite cyclic cover induced by the homomorphism from the link group to $\mathbb{Z}$ which sends each meridian to $1\in\mathbb{Z}$. The multivariable Alexander polynomial specializes to the single-variable Alexander polynomial by setting all variables equal to each other.

We note that it is fairly straightforward to extend \autoref{Ther:Alex} to the single-variable Alexander polynomial of a spiral link; that is, the same formula also applies to spiral links. Given a link $L\subset S^3$, the single-variable Alexander polynomial is obtained by computing $\Delta_L(t)=\det(M-tM^T)$, where $M$ is a Seifert matrix corresponding to a (possibly disconnected) Seifert surface for $L$. A spiral link $S(n,k,\varepsilon)$, where $n$ and $k$ are no longer assumed to be relatively prime, still admits a connected cake surface with a cake homology basis consisting of $(n-1)(k-1)$ elements (see \autoref{sec:alexander}). The form of the Seifert matrix, and therefore the computation of $\Delta_{S(n,k,\varepsilon)}(t)$, does not differ from the knot case seen in \autoref{sec:alexander}. Thus, \autoref{Ther:Alex} also holds for spiral links.

\autoref{Ther:genus} also extends to the link case. 
We still obtain an upper bound on the 3-genus \[g(S(n,k,\varepsilon))\leq \frac{(n-1)(k-1)-(\gcd(n,k)-1)}{2}\] from the cake surface.
To obtain the reverse inequality, we employ the fact that for any link $L$ with $|L|$ components, we have 
\[\deg(\Delta_L(t))\leq 2g(L)+(|L|-1).\]

One further avenue of study is to explore classification questions for spiral links, and whether the formula for the Alexander polynomial can yield other interesting consequences for links. We also pose the following question.

\begin{question}
    Is every component in a spiral link itself a spiral knot?
\end{question}

\subsection*{Murasugi formula}
Suppose $K\subset S^3$ is a knot of period $q$, that is, $K$ is invariant under some orientation-preserving diffeomorphism $T \colon S^3\to S^3$, $T\neq\id_{S^3}$, such that $q=\min\{k\in\mathbb{N}\,|\,T^k=\id_{S^3}\}$.
By the resolved Smith Conjecture, the fixed point set of $T$ is an unknot $U$ \cite{BassMorgan}.
Consider the $q$-fold branched cover $\pi \colon S^3\to S^3/T\cong S^3$, and let $\overline{K}=\pi(K)$ and $\overline{U}=\pi(U)$.
Note that $\overline{U}$ is also unknotted.
Let $L$ be the 2-component link $L=\overline{U}\sqcup\overline{K}$.
Murasugi proved that the Alexander polynomial $\Delta_K(t)$ of $K$ and the two-variable Alexander polynomial $\Delta_L(t_1,t_2)$ of $L$ are related by \[\delta_\lambda(t)\Delta_K(t)=\prod_{l=1}^q\Delta_L(e^{\frac{2\pi li}{q}},t),\] where $\lambda$ is the linking number of $\overline{U}$ and $\overline{K}$ and $\delta_\lambda(t)=(1-t^\lambda)/(1-t)$ \cite{Murasugi}.
This formula is explored further in \cite{DavisLivingston}, where Davis-Livingston recast it into the \emph{Murasugi conditions} on pairs $(\Delta(t),q)$, where $\Delta(t)$ is the Alexander polynomial of some knot and $q\in\mathbb{N}$.
They study which pairs satisfy the Murasugi conditions and, supposing the Murasugi conditions are satisfied, when such a pair is realized by a $q$-periodic knot.

We do not prove it here, but we expect that the Alexander polynomial formula from \autoref{Ther:Alex} is an explicit version of Murasugi's formula for spiral knots.
In the setting of spiral knots, $U$ is the braid axis for $S(p,q,\varepsilon)$, and $\overline{U}$ is the braid axis for the knot $\overline{K}=S(p,1,\varepsilon)$. It would be interesting to recontextualize the results in this paper in light of Murasugi's work.

\begin{problem}
    Study the connection between our work and Murasugi's work.
    When $K$ is a spiral knot, can Murasugi's formula be turned into the formula from \autoref{Ther:Alex}? 
    That is, are the factors which appear in each formula for the Alexander polynomial closely related or the same?
\end{problem}

\subsection*{Bridge number and meridional rank}

Our results in \autoref{Ther:genus} and \autoref{Ther:swappq} naturally led us to consider how we could strengthen the conditions under which two spiral knots are equal. Inspired by the notions of bridge number and meridional rank, as well as some of our observations in \autoref{rmk:facts}, we propose the following conjecture (see also \autoref{Q:pqpairs}).

\begin{conjecture} \label{conj:equal}
     If $S(p,q,\varepsilon) = S(p',q',\varepsilon')$, then either $p = p'$ and $q = q'$ or $S(p,q,\varepsilon)$ is a torus knot (so that $p = q'$ and $q = p'$).
\end{conjecture}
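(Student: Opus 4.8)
The plan is to reduce \autoref{conj:equal} to the single assertion that the unordered pair $\{p,q\}$ is a knot invariant of spiral knots, and then to recover that pair from two numerical invariants: the genus, which we already compute, and the bridge number (equivalently the meridional rank), which I expect to equal $\min(p,q)$. For the reduction, suppose $S(p,q,\varepsilon)=S(p',q',\varepsilon')$. By \autoref{Ther:genus} we have $(p-1)(q-1)=(p'-1)(q'-1)$, and comparing the second coefficients of the (symmetric) Alexander polynomials via \autoref{cor:coeff} forces $q\,\gamma_{p-1}(\varepsilon)=q'\,\gamma_{p'-1}(\varepsilon')$ (the alternative sign would make $q\,\gamma_{p-1}(\varepsilon)+q'\,\gamma_{p'-1}(\varepsilon')=-2$, impossible). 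If $\gamma_{p-1}(\varepsilon)=0$ then $\gamma_{p'-1}(\varepsilon')=0$ as well, both knots are torus knots, and \autoref{prop:torus} already gives $\{p,q\}=\{p',q'\}$ unconditionally. In all cases, once $\{p,q\}=\{p',q'\}$ is known we are done: either $(p,q)=(p',q')$, the first alternative, or $(p,q)=(q',p')$, in which case $S(p,q,\varepsilon)=S(q,p,\varepsilon')$ and \autoref{Ther:swappq} forces $S(p,q,\varepsilon)$ to be a torus knot with $p=q'$ and $q=p'$, the second alternative. So it suffices to prove $S(p,q,\varepsilon)=S(p',q',\varepsilon')\implies\{p,q\}=\{p',q'\}$; since the torus case is already covered, the content lies in the non-torus case.

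\textbf{The bridge number formula.} The key new input I would aim to establish is
\[
b(S(p,q,\varepsilon))=\min(p,q),
\]
where $b(\cdot)$ denotes bridge number; one may equivalently work with the meridional rank, which coincides with the bridge number here by the meridional rank conjecture (known for torus knots, $2$-bridge knots, and many other classes). The bound $b\le p$ is immediate from the closed $p$-braid diagram. The bound $b\le q$ should follow from the description of $S(p,q,\varepsilon)$ as the preimage of the unknot $S(p,1,\varepsilon)$ under the $q$-fold cyclic branched cover of $S^3$ over the braid axis: this unknot sits in a standard position relative to the axis, so its lift admits an explicit $q$-bridge presentation (for $q=2$ this is the classical fact that the closure of $(\sigma_1^{\varepsilon_1}\cdots\sigma_{p-1}^{\varepsilon_{p-1}})^2$ is rational, hence $2$-bridge, with continued fraction read off from $\varepsilon$). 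The reverse inequality $b\ge\min(p,q)$ is the substantive point. Granting the formula, $S(p,q,\varepsilon)$ determines both $m:=\min(p,q)$ (its bridge number) and $N:=(p-1)(q-1)$ (twice its genus, by \autoref{Ther:genus}), whence $\{p,q\}=\{\,m,\ N/(m-1)+1\,\}$; together with the reduction above, this proves \autoref{conj:equal}. An alternative route for the hyperbolic cases might instead recover $q$ as a distinguished period of the knot, using constraints relating a knot's periods to its genus (cf. the Edmonds conditions).

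\textbf{Main obstacle.} The crux is the lower bound $b(S(p,q,\varepsilon))\ge\min(p,q)$, uniformly in $\varepsilon$. Spiral knots form a very inhomogeneous family — most are hyperbolic with no manifest symmetry forcing the bridge number up — and the cheap lower bounds are useless here: a $2$-bridge knot can have arbitrarily large Seifert genus, so \autoref{Ther:genus} by itself yields nothing beyond $b\ge 2$, and neither the Alexander polynomial nor periodicity (which only constrains \emph{which} values of $q$ can occur, not that a given one is ``the'' period) isolates $\min(p,q)$. A successful proof will likely need either an optimal bridge sphere built from the cake surface of \autoref{sec:alexander} or a uniform meridional-rank computation across the entire family, and I expect essentially all of the difficulty to be concentrated there.
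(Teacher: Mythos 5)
There is a genuine gap here, but it is worth being precise about what kind: the statement you are proving is stated in the paper as a \emph{conjecture}, not a theorem, and your proposal reproduces the paper's own conditional reduction rather than closing it. You reduce \autoref{conj:equal} to the bridge-number formula $b(S(p,q,\varepsilon))=\min(p,q)$; this formula is exactly the paper's \autoref{conj:bridge}, and the paper records the same implication you describe (genus from \autoref{Ther:genus} plus $\min\{p,q\}=\min\{p',q'\}$ forces $\{p,q\}=\{p',q'\}$, and the swapped case $p=q'$, $q=p'$ is handled by \autoref{Ther:swappq}). The paper also already proves the upper bound $b(S(p,q,\varepsilon))\le\min(p,q)$ by a coloring argument on the braid diagram, so the sole missing ingredient on both sides is the lower bound $b\ge\min(p,q)$, uniformly in $\varepsilon$. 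You correctly identify this as the crux and explicitly do not prove it; consequently your argument is a reduction to an open conjecture, not a proof, and the gap is precisely the one that keeps the statement a conjecture in the paper.

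Two smaller points. First, the detour through \autoref{cor:coeff} to handle the torus case separately is harmless but unnecessary: once $\{p,q\}=\{p',q'\}$ is known from genus and bridge number, the dichotomy in the conjecture follows for all spiral knots at once via \autoref{Ther:swappq}, which is how the paper phrases it. Second, your proposed route to the upper bound $b\le q$ through the $q$-fold branched cover, including the assertion that every $S(p,2,\varepsilon)$ is $2$-bridge with continued fraction read off from $\varepsilon$, is plausible and consistent with the paper's tables but is asserted rather than argued; the paper instead obtains $b\le q$ from the coloring algorithm of Blair et al. Neither of these affects the main verdict: without a lower bound on the bridge number or the meridional rank of the form $\min(p,q)$, the proposal does not establish \autoref{conj:equal}.
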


This conjecture is connected to Cappell and Shaneson's \textit{meridional rank conjecture} (see Problem 1.11 in Kirby's list \cite{KirbyList}), which posits that the bridge number and meridional rank of any link are equal. Given a knot $K$, a \textit{bridge} of a diagram $D$ for $K$ is an arc in $D$ that crosses over another arc. The \textit{bridge number} $b(K)$ is the minimum number of bridges over all diagrams of a knot. The \textit{meridional rank} of a knot is the minimum number of meridians required to generate the knot group. The meridional rank conjecture has been proven for several families of links, including torus links \cite{MeridionalRankTorusLinks}, but it is not yet known whether the conjecture holds for spiral knots. 

\begin{question} \label{Q:MRC}
    Does the meridional rank conjecture hold for spiral knots?
\end{question}

On the other hand, every knot $K$ in \autoref{tab:table1} and \autoref{tab:table2}, along with some specific families of spiral knots with higher crossing number which we have studied (although we omit the details here), has bridge number $b(K)$ equal to $\min(p,q)$. Since this is also true for torus knots \cite{Schubert,Schultens}, we conjecture that this holds for all spiral knots. 

\begin{conjecture} \label{conj:bridge}
   All spiral knots $S(p,q,\varepsilon)$ satisfy $b(S(p,q,\varepsilon)) = \min(p,q)$. 
\end{conjecture}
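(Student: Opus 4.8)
The plan is to prove the two inequalities $b(S(p,q,\varepsilon)) \le \min(p,q)$ and $b(S(p,q,\varepsilon)) \ge \min(p,q)$ separately, and I expect the second to be the genuine obstacle. The upper bound is immediate when $p \le q$: then $S(p,q,\varepsilon)$ is the closure of a $p$-strand braid, and the closure of an $n$-braid always has bridge number at most $n$ (this is exactly how one sees $b(T_{p,q}) \le \min(p,q)$). The delicate subcase of the upper bound is $q < p$, where \autoref{Ther:swappq} tells us $S(p,q,\varepsilon)$ is in general \emph{not} a closed $q$-braid — indeed $4_1 = S(3,2,(1,-1))$ already has braid index $3$ — so one cannot simply re-present the knot on $q$ strands. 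Instead I would try to exhibit a $q$-bridge diagram directly, adapted to the $\mathbb Z/q$ symmetry of $S(p,q,\varepsilon)$ (it is $q$-periodic with unknotted quotient $S(p,1,\varepsilon)$) and to the very specific nature of that unknotted quotient, in the spirit of reading $T_{p,q}=T_{q,p}$ on $q$ strands while using that a general $\varepsilon$ only flips crossing signs and leaves the underlying ``grid'' of crossings of the standard spiral diagram intact. A natural first step, strongly supported by \autoref{tab:table1} and \autoref{tab:table2}, is to show that every $S(p,2,\varepsilon)$ is a $2$-bridge knot, e.g.\ by folding the closed $p$-braid $\widehat{w^{2}}$ (with $w = \sigma_1^{\varepsilon_1}\cdots\sigma_{p-1}^{\varepsilon_{p-1}}$) along its symmetry into a $4$-plat.

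For the lower bound, the key is the inequality $b(K) \ge \mathrm{mr}(K)$, where $\mathrm{mr}$ denotes meridional rank; this holds for every knot and lets us avoid the meridional rank conjecture itself, which (see \autoref{Q:MRC}) is open for spiral knots. So it suffices to show $\mathrm{mr}(S(p,q,\varepsilon)) \ge \min(p,q)$. The model to imitate is the proof of the meridional rank conjecture for torus links \cite{MeridionalRankTorusLinks}, and more generally the Coxeter-group technique for bounding meridional rank from below: one produces an epimorphism from the knot group onto a suitable group — a dihedral group, a symmetric group, or a Coxeter group read off the diagram — that admits no generating set of fewer than $\min(p,q)$ conjugates of a fixed element, exploiting the rigid Wirtinger presentation coming from the $p\times q$ array of crossings. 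As a fallback one could instead bound the Heegaard genus of the double branched cover from below, using $b(K) - 1 \ge g_H(\Sigma_2(K))$ together with $d$-invariants or the Casson--Walker invariant (the rank of $H_1(\Sigma_2)$ alone is too weak, being essentially the determinant). Two easy reductions: when $q=2$ the lower bound is free, since \autoref{Ther:genus} gives genus $(p-1)/2 > 0$ so $S(p,2,\varepsilon)$ is knotted and $b \ge 2$; and the knots in \autoref{tab:table1} and \autoref{tab:table2} are already verified, so the genuine content lies in the range $\min(p,q) \ge 3$ with arbitrary $\varepsilon$.

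The hard part will be this lower bound, for a structural reason: the arguments that compute $\mathrm{mr}(T_{p,q})$ and $b(T_{p,q})$ lean on the positivity and fiberedness of torus knots and on the fact that their diagrams sit on a genuinely embedded torus, and a general sign vector $\varepsilon$ destroys all three features. A priori the extra negative crossings could produce cancellations in the knot group that allow it to be generated by fewer than $\min(p,q)$ meridians; \autoref{conj:bridge} asserts this never happens, and proving it \emph{uniformly} over all $\varepsilon$ — rather than case by case, as we did for the tabulated examples — is where a new idea, presumably leveraging the braid-theoretic or branched-cover description of spiral knots from \autoref{sec:alexander}, seems to be needed.
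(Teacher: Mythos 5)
This statement is a \emph{conjecture} in the paper, not a theorem: the authors prove only the inequality $b(S(p,q,\varepsilon)) \leq \min(p,q)$ (and only as a proof sketch), and explicitly leave the lower bound as an open problem. Your proposal, read as a proof attempt, therefore has a genuine gap in exactly the place you yourself flag: the lower bound $b(S(p,q,\varepsilon)) \geq \min(p,q)$ is never established. What you offer for it --- pass to meridional rank via $b(K) \geq \mathrm{mr}(K)$ and then try to adapt the Coxeter-quotient technique used for torus links in \cite{MeridionalRankTorusLinks} --- is precisely the strategy the authors themselves propose as future work, and neither you nor they carry it out. Your structural worry is also well founded: the torus-link argument leans on features (positivity, the embedded torus) that a general sign vector $\varepsilon$ destroys, so producing a quotient group that forces $\min(p,q)$ meridian-conjugate generators uniformly in $\varepsilon$ is the missing idea, not a routine adaptation.

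On the upper bound your route diverges from the paper's in the only nontrivial subcase, $q < p$. You propose to build a $q$-bridge diagram directly from the $\mathbb{Z}/q$ periodicity (e.g.\ folding $S(p,2,\varepsilon)$ into a $4$-plat), but you do not execute this for general $q$, and it is not obvious how to do so. The paper instead runs the Wirtinger-number coloring algorithm of \cite{Blair_etal} on the standard braid diagram: seed the coloring with the $q$ strands that are understrands at the $\sigma_1$-crossings, then propagate column by column through the $\sigma_i$-crossings by induction on $i$. That argument needs no re-plat-ing and no symmetry beyond the repeating block structure of the braid word, so if you want a complete proof of the inequality $b \leq q$ you should either adopt that method or supply the details of your plat construction for arbitrary $q$. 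Your observation that $b \leq p$ follows from the braid index agrees with the paper.
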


If \autoref{conj:bridge} was true, then \autoref{conj:equal} would follow directly, as we can only have $\frac{(p-1)(q-1)}{2} = \frac{(p'-1)(q'-1)}{2}$ and $\min\{p,q\} = \min\{p',q'\}$ if we have the equality of sets $\{p,q\} = \{p',q'\}$. If $p = q'$ and $q=p'$, then we have $S(p,q,\varepsilon) = S(q,p,\varepsilon')$ which implies that $S(p,q,\varepsilon)$ is a torus knot by \autoref{Ther:swappq}. Otherwise, we have $p=p'$ and $q=q'$.

One direction of \autoref{conj:bridge} is fairly straightforward: using the algorithm for finding upper bounds on bridge number described in \cite{Blair_etal}, one can show that $\min(p,q)$ is an upper bound for spiral knots. We omit the full details, but we provide a proof sketch below.

The other direction of \autoref{conj:bridge} is more difficult, and a possible future direction of study. It is well known that meridional rank bounds the bridge number from below. If one could prove that the meridional rank of spiral knots is bounded below by $\min(p,q)$, this would prove \autoref{conj:bridge} and thus \autoref{conj:equal}, as well as answer \autoref{Q:MRC} in the affirmative.

\begin{proposition}
       Given a spiral knot $S(p,q,\varepsilon)$, 
    \[b(S(p,q,\varepsilon)) \leq \min(p,q).\] 
\end{proposition}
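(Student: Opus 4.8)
The plan is to exhibit an explicit diagram of $S(p,q,\varepsilon)$ with at most $\min(p,q)$ bridges, using the fact that a spiral knot has two natural braid presentations of interest. If $p \leq q$, work directly with the given braid $(\sigma_1^{\varepsilon_1}\cdots\sigma_{p-1}^{\varepsilon_{p-1}})^q$ on $p$ strands; if $q < p$, first observe that $S(p,q,\varepsilon)$ is $q$-periodic, and use the closed-braid picture in which the braid axis wraps the diagram into $q$ ``sectors.'' In either case, the point is that the plat-style or arc-presentation bridge-number bound applies: a closed $p$-braid admits a diagram with at most $p$ local maxima (one can push all the maxima of the $p$ strands to the top), so $b(S(p,q,\varepsilon)) \leq p$, and symmetrically $b(S(p,q,\varepsilon)) \leq q$ using the periodic picture. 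Taking the minimum gives the claim.

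More concretely, I would follow the algorithm of Blair et al. referenced in the excerpt. First, I would fix an orientation of the plane and isotope the standard closed-braid diagram of $S(p,q,\varepsilon)$ so that the $p$ Seifert circles become $p$ nested ``bridges'': each strand of the braid contributes exactly one over-arc at the top of the diagram, and all the crossings of the braid word (together with its $q$ repetitions) are pushed to happen ``underneath.'' This shows $\min$'s first argument, $p$, is an upper bound. Second, I would invoke the $q$-periodicity of $S(p,q,\varepsilon)$ noted in Definition~\ref{def:spiral} and the surrounding discussion: the knot is the preimage of $S(p,1,\varepsilon)$ under a $q$-fold branched cover, so there is a diagram built from $q$ congruent tangles arranged cyclically around the branch axis. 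Cutting this diagram open along a single page/fiber and re-drawing it, one sees that it is also the closure of a braid on $q$ strands (the ``other'' braiding direction), whence the same local-maximum argument gives $b(S(p,q,\varepsilon)) \leq q$. Combining the two bounds yields $b(S(p,q,\varepsilon)) \leq \min(p,q)$.

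The main obstacle is the second bound: unlike the first, which is immediate from the defining braid word, realizing $S(p,q,\varepsilon)$ as a closed braid on $q$ strands (or, equivalently, producing the $q$-bridge diagram directly) requires genuinely using the periodicity and carefully tracking how the $\varepsilon_i$'s redistribute among the $q$ sectors. For torus knots this is the classical symmetry $T_{p,q}=T_{q,p}$, but for general $\varepsilon$ there is no such knot equality (indeed Theorem~\ref{Ther:swappq} says so); nonetheless, the $q$-periodic diagram still has only $q$ bridges even though it is not the closure of a spiral $q$-braid. So the careful step is to write down this $q$-bridge diagram explicitly — e.g. as a ``pretzel-like'' or ``Turk's-head-like'' diagram with $q$-fold rotational symmetry about the branch axis — and count its maxima, rather than to find a clean braid-theoretic reformulation. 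Once the diagram is in hand the bridge count is routine, so I would spend the bulk of the write-up on constructing that diagram and verifying it has exactly $q$ over-bridges; everything else is bookkeeping. I omit the full details here as indicated.
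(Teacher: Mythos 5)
Your first bound is fine and matches the paper: the bridge number is bounded above by the braid index, and the defining $p$-strand braid word gives $b(S(p,q,\varepsilon))\leq\beta(S(p,q,\varepsilon))\leq p$. The problem is the second bound, which is the substantive half of the statement. You assert that the $q$-periodic closed-braid picture ``still has only $q$ bridges'' and that what remains is to draw this diagram and count its maxima. But a bridge here means a maximal overpass, and in the standard periodic diagram each of the $q$ repetitions of $\sigma_1^{\varepsilon_1}\cdots\sigma_{p-1}^{\varepsilon_{p-1}}$ yields a single long overpass only in the torus-knot case $\varepsilon=(1,\dots,1)$: every sign change in $\varepsilon$ forces the traveling strand to dive under, splitting that overpass into several pieces, so the standard diagram has strictly more than $q$ maximal overpasses whenever $\varepsilon$ is not constant. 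Your fallback claim that the periodic picture is also the closure of a $q$-strand braid is likewise only true for torus knots, as you yourself note. So the ``careful step'' you defer to bookkeeping --- exhibiting a diagram with exactly $q$ overbridges --- is precisely the missing idea, and no candidate diagram is produced.

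The paper sidesteps this entirely. Rather than constructing a $q$-overpass diagram, it bounds the \emph{Wirtinger number} of the standard diagram using the coloring algorithm of \cite{Blair_etal}, whose main theorem identifies the Wirtinger number with the bridge number. One seeds the $q$ strands that are understrands at the $1$-crossings and shows, by induction on the crossing index $i$, that the coloring moves propagate through the $i$-crossings for every $i$ until the whole diagram is colored; the number of seeds, $q$, then bounds $b(S(p,q,\varepsilon))$ from above. This is genuinely weaker information than having $q$ maximal overpasses in that particular diagram, which is why it succeeds where the overpass count fails. You cite Blair et al.\ at the outset but never actually run their coloring argument for the $q$ bound; without it (or an honest construction of a $q$-bridge diagram, which is not routine) your proof of $b(S(p,q,\varepsilon))\leq q$ is incomplete.
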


\begin{proof}[Proof sketch]
We show that $b(S(p,q,\varepsilon))$ is bounded above by both $p$ and $q$, and therefore can be at most $\min(p,q)$. First note that the bridge number $b(K)$ is bounded above by the braid index $\beta(K)$. So    \[b(S(p,q,\varepsilon)) \leq \beta(S(p,q,\varepsilon)) \leq p.\]

To show $b(S(p,q,\varepsilon))\leq q$, we use the algorithm for finding upper bounds on bridge number described in \cite{Blair_etal}. Represent $S(p,q,\varepsilon)$ in its standard braid form (see \autoref{sec:background}), and call crossings corresponding to $\sigma_i$ in the braid word \textit{$i$-crossings}. We define a partial coloring on the strands of $S(p,q,\varepsilon)$, starting with the $q$ strands that are understrands at all of the $1$-crossings. We call this initial set $A_1$. By the structure of the braid and the coloring rule from \cite{Blair_etal}, each uncolored strand at a $1$-crossing can be colored with the coloring move, since it is adjacent to or under a strand already included in $A_1$. This extends the partial coloring to all strands at $1$-crossings.

We then apply strong induction over $i$-crossings: assuming we have included all strands at crossings up to index $i-1$ in our partial coloring, we can show that the braid structure guarantees that any strand entering an $i$-crossing must have previously touched an $(i-1)$-crossing, and therefore is already colored. By the coloring move rule, there is a sequence of coloring moves that allows us to successively color all the remaining strands at each $i$-crossing. By induction, we can iterate this process until all the strands have been colored. Since the entire diagram is colored starting from $q$ initial strands, we get \[b(S(p,q,\varepsilon))\leq q\] as desired.
\end{proof}

\printbibliography[title={Bibliography}]

\vspace{2em}

\noindent
\begin{tabular}{lcl}

\makecell[tl]{
Sarah Blackwell \\ University of Virginia \\
Email: \texttt{\href{mailto:blackwell@virginia.edu}{blackwell@virginia.edu}}\\
URL: \url{https://seblackwell.com/} \\
$\quad$}
& \hspace{3em} &
\makecell[tl]{
Luke Moyar \\ University of Virginia \\
Email: \texttt{\href{mailto:pxq5zg@virginia.edu}{pxq5zg@virginia.edu}}
} \\
\makecell[tl]{
Ashish Das \\ North Carolina State University\\
Email: \texttt{\href{mailto:akdas3@ncsu.edu}{akdas3@ncsu.edu}}\\
$\quad$}
& &
\makecell[tl]{
Faisal Leo Quraishi \\ University of Nevada, Reno\\
Email: \texttt{\href{mailto:fquraishi@unr.edu}{fquraishi@unr.edu}}
} \\
\makecell[tl]{
Sydney Mayer \\ Vanderbilt University\\
Email:
\texttt{\href{mailto:sydney.mayer@vanderbilt.edu}{sydney.mayer@vanderbilt.edu}}
}
& &
\makecell[tl]{
Ryan Stees \\ University of Virginia\\
Email: \texttt{\href{mailto:rs2sf@virginia.edu}{rs2sf@virginia.edu}}\\
URL: \url{https://www.ryanstees.com}
}
\end{tabular}

\end{document}